\newtheorem{myrem}{Remark}[section]
\newtheorem{mytheo}{Theorem}[section]
\newtheorem{mylem}{Lemma}[section]
\newtheorem{mycor}{Corollary}[section]
\newtheorem{mypro}{Proposition}[section]
\newtheorem*{mycorollary1.2}{Corollary 1.2*}
\theoremstyle{remark}
\numberwithin{equation}{section}
\begin{document}

\title{On Log-Concave-Tailed Chaoses  and the Restricted
Isometry Property}

\pagestyle{fancy}

\fancyhf {} 


\fancyhead[CO]{\footnotesize  SUPREMA OF CHAOSES AND THE R.I.P.}

\fancyhead [CE]{\footnotesize G. DAI, Z. SU, V. ULYANOV AND H. WANG }

\fancyhead [LE]{\thepage}
\fancyhead [RO]{\thepage}
\renewcommand{\headrulewidth}{0mm}

\author{Guozheng Dai}
\address{Zhejiang University, Hangzhou, 310058,  China.}
\email{11935022@zju.edu.cn}

\author{Zhonggen Su}
\address{Zhejiang University, Hangzhou, 310058,  China.}
\email{suzhonggen@zju.edu.cn}

\author{Vladimir Ulyanov}
\address{Moscow State University, Leninskie Gory, 1, Moscow, Russia.}
\email{vladim53@yandex.ru}

\author{Hanchao Wang }
\address{Shandong University,  Jinan,  250100, China.}
\email{wanghanchao@sdu.edu.cn}
\subjclass[2020]{60B20, 41A46, 46B20}

\date{}

\keywords{chaining argument, decoupling inequality, restricted isometry property, suprema of chaos process}

\begin{abstract}	
	In this paper, we obtain a  $p$-th moment bound for the suprema of a log-concave-tailed nonhomogeneous chaos process, which is optimal in some special cases. A crucial ingredient of the proof is a novel decoupling inequality, which may be of independent interest. With this $p$-th moment  bound, we show two  uniform Hanson-Wright type deviation inequalities for $\alpha$-subexponential entries ($1\le \alpha\le 2$), which recover some known results. As applications, we prove the restricted isometry property of partial random circulant matrices and time-frequency structured random matrices induced by standard $\alpha$-subexponential vectors ($1\le \alpha\le 2$), which extends the previously known results for the subgaussian case. 
\end{abstract}

\maketitle

\section{Introduction and Main Results}

 Let $\xi=(\xi_{1},\cdots, \xi_{n})$ be a random vector with  independent standard entries. Assume $f(x_{1}, \cdots, x_{n})$ is an  order two polynomial. Then, we call $f(\xi_{1}, \cdots, \xi_{n})$ a (order two) chaos.
 A chaos $f(\xi_{1}, \cdots, \xi_{n})$ is homogeneous if $f(\cdot)$ is an order two homogeneous  polynomial, i.e., a random variable with the following form
\begin{align}
	S_A(\xi)=\sum_{i,j=1}^{n}a_{ij}\xi_{i}\xi_{j},\nonumber
\end{align}
where $A=(a_{ij})_{n\times n}$ is a fixed matrix.
 Furthermore, if all diagnal entries of $A$ are zero, $S_{A}(\xi)$ is tetrahedral. A chaos is  nonhomogeneous if it is not homogeneous.   A homogeneous (resp. nonhomogeneous) chaos process is a family of homogeneous (resp. nonhomogeneous) chaoses, $\{S_{A}(\xi), A\in\mathcal{A} \}$,  where $\mathcal{A}$ is a  family of matrices.

In this paper, we focus on investigating the suprema of some nonhomogeneous chaos processes. Specifically,  we would like to find the optimal bounds for the quantity $Z_{\mathcal{A}}(\xi):=\sup_{A\in \mathcal{A}}\big( S_A(\xi)-\textsf{E}S_{A}(\xi)\big)$. Indeed, the chaos $S_{A}(\xi)-\textsf{E}S_{A}(\xi)$ is generated by the following nohomogeneous polynomial,
\begin{align}\label{Eq_polynomial}
	f(x_{1}, \cdots, x_{n})=\sum_{i\neq j}a_{ij}f_{1}(x_{i})f_{1}(x_{j})+\sum_{i}a_{ii}f_{2}(x_{i}),
\end{align}
where $f_{1}(x)=x, f_{2}(x)=x^{2}-1$ are Hermite polynomials. The upper bound for tails of $S_{A}(\xi)-\textsf{E}S_{A}(\xi)$ is also called the Hanson-Wright type inequality, which has numerous applications in high-dimensional probability, statistics, and random matrix theory \cite{highdimension}. We also refer interested readers to \cite{Arcones_Gine} and the references therein for more information about such chaos.

If   $\{\xi_{i}, i\ge 1\}$  is a sequence of independent Gaussian variables, Arcones and Gin\'{e} \cite{Arcones_Gine} develop a decoupling inequality (see \eqref{Gaussian_decoupling} below) and then show an optimal bound for $Z_{\mathcal{A}}(\xi)$. But their bound contains a term that is difficult to estimate. Adamczak et al \cite{Adamczak_AIHP_2} and Krahmer et al \cite{Rauhut_CPAM} obtain bounds that can be handled more easily in many situations.

As for the non-Gaussian case, the bound for $Z_{\mathcal{A}}(\xi)$  is very difficult to investigate.  One of the difficulties comes from the lack of decoupling inequalities.  Here, we focus on the case when $\{\xi_{i}, i\ge 1\}$  is a sequence of independent log-concave-tailed variables (see the definition in Section 1.1).  In this paper, we develop a novel decoupling inequality for the nonhomogeneous chaos $S_{A}(\xi)-\textsf{E}S_{A}(\xi)$ and then prove a  $p$-th moment bounds for $Z_{\mathcal{A}}(\xi)$. With this moment bound in hand, we obtain two uniform Hanson-Wright type deviation inequalities via classical concentration inequalities and chaining arguments.
 One of the deviation inequalities helps us prove the restricted isometry property (R.I.P.) of partial random circulant matrices and time-frequency structured random matrices induced by  random vectors with standard $\alpha$-subexponential entries (see \eqref{Eq_definition_alpha_subexponential} below for the definition), $1\le \alpha\le 2$, which extends the previously known results for the  subgaussian case \cite{Rauhut_CPAM}. 
  
  In the remainder of this section, we first present the $p$-th moment  bounds for  the suprema of a log-concaved-tailed chaos process  and then two uniform Hanson-Wright type deviation inequalities for $\alpha$-subexponential entries, $1\le \alpha\le 2$. Finally, we give the results of the R.I.P. for some structured random matrices.

  \subsection{Suprema of Chaos Processes}
  At the beginning of this subsection, we first introduce some notations that will be used next.
  We say a random variable $\xi_{1}$ is log-concave-tailed if $-\log \textsf{P}\{\vert\xi_{1}\vert\ge t \}$ is convex. An interesting case is  when $-\log \textsf{P}\{\vert\xi_{1}\vert\ge t \}=t^{\alpha}$ ($ \alpha\ge 1$), the symmetric Weibull variables with the scale parameter $1$ and the shape parameter $\alpha$.
  A random variable $\eta_{1}$ is $\alpha$-subexponential (or $\alpha$-sub-Weibull) if its tail probability can be bounded by the symmetric Weibull variable's (with the scale parameter $1$ and the shape parameter $\alpha$). In particular,
  \begin{align}\label{Eq_definition_alpha_subexponential}
  	\textsf{P}\{\vert \eta_{1}\vert\ge Kt  \}\le ce^{-ct^{\alpha}},\quad t\ge 0,
  \end{align}
  where $K$ is a parameter and $c$ is a universal constant. We often refer to $\eta_{1}$ as a subgaussian variable when $\alpha =2$. The $\alpha$-subexponential norm of $\eta_{1}$ is defined as follows:
  \begin{align}\label{Eq_definition_alphanorm}
  	\Vert \eta_{1}\Vert_{\Psi_{\alpha}}:=\inf\{ t>0:\textsf{E}\exp(\frac{\vert \eta_{1}\vert^{\alpha}}{t^{\alpha}})\le 2\}.
  \end{align}

 The concentration properties of $S_{A}(\xi)$ (defined as before) is a classic topic in probability. A well-known result is due to Hanson and Wright, claiming that if $\xi_{i}$ are independent centered subgaussian variables satisfying $\max_{i}\Vert \xi_{i}\Vert_{\Psi_{2}}\le L$ and $A$ is a symmetric matrix, then for all $t\ge 0$ (this version was presented in \cite{Rudelson_ecp})
 \begin{align}\label{Eq_hansonwright_classic}
 	\textsf{P}\big\{\vert S_{A}(\xi)-\textsf{E}S_{A}(\xi)\vert \ge t  \big\}\le 2\exp\Big(-c\min\Big\{\frac{t^{2}}{L^{4}\Vert A\Vert_{F}^{2}}, \frac{t}{L^{2}\Vert A\Vert_{l_{2}\to l_{2}}}  \Big\}   \Big).
 \end{align}
Here, $\Vert \cdot\Vert_{F}$  and $\Vert \cdot\Vert_{l_{2}\to l_{2}}$ are the Frobenius norm and the spectral norm of a matrix, respectively.

An important extension of \eqref{Eq_hansonwright_classic} is to consider the concentration properties of $Z_{\mathcal{A}}(\xi)$ (defined as before), where $\mathcal{A}$ is a family of fixed matrices. When $\xi_{i}$ are independent Rademacher variables (take values $\pm 1$ with equal probabilities) and the diagnal entries of symmetric matrix $A$ are $0$, it was shown for $t\ge 0$ in Talagrand's celebrated paper \cite{Talagrand_invention}
\begin{align}\label{Eq_concentration_Talagrand}
	&\textsf{P}\big\{\vert Z_{\mathcal{A}}(\xi)-\textsf{E}Z_{\mathcal{A}}(\xi)\vert \ge t  \big\}\nonumber\\
	\le& 2\exp\Big(-c\min\Big\{\frac{t^{2}}{(\textsf{E}\sup_{A\in \mathcal{A}} \Vert A\xi\Vert_{2})^{2}}, \frac{t}{\sup_{A\in\mathcal{A}}\Vert A\Vert_{l_{2}\to l_{2}}}  \Big\}   \Big).
\end{align}

Recently, Klochkov and Zhivotovskiy \cite{Klochkov_EJP} obtained a uniform Hanson-Wright type inequality for more general random variables via the entropy method. In particular, assume $\xi_{i}$ are independent centered subgaussian variables and $A$ is symmetric. Let $L^{\prime}=\big\Vert\max_{i}\vert\xi_{i}\vert\big\Vert_{\Psi_{2}}$, then 
\begin{align}\label{Eq_Klochkov}
	&\textsf{P}\big\{ Z_{\mathcal{A}}(\xi)-\textsf{E}Z_{\mathcal{A}}(\xi)\ge t   \big\}\nonumber\\
	\le&\exp  \Big( -c\min\Big\{ \frac{t^{2}}{L^{\prime 2}(\textsf{E}\sup_{A\in \mathcal{A}}\Vert A\xi\Vert_{2})^{2}}, \frac{t}{L^{\prime 2}\sup_{A\in \mathcal{A}}\Vert A\Vert_{l_{2}\to l_{2}}}  \Big\}  \Big),
\end{align}
where $t\ge \max\{ L^{\prime}\textsf{E}\sup_{A\in\mathcal{A}}\Vert A\xi\Vert_{2}, L^{\prime 2}\sup_{A\in\mathcal{A}}\Vert A\Vert_{l_{2}\to l_{2}} \}$.

We usually call the results of the form \eqref{Eq_concentration_Talagrand} and \eqref{Eq_Klochkov} (one-sided) concentration inequalities. Another similar bounds are one-sided and have a multiplicative constant before $\textsf{E}Z_{\mathcal{A}}(\xi)$ (or replace $\textsf{E}Z_{\mathcal{A}}(\xi)$ with its suitable upper bound), which are called deviation inequalities. In the study of probability theory, concentration inequalities are obviously more important than deviation inequalities. But in some applications (such as the compressive sensing appearing below), we usually care more about the order of the bound.  Hence, deviation inequalities also play an important role in such applications.

Next, we shall introduce some deviation inequalities. To this end, we first introduce some notations. Denote $\big(\textsf{E}\vert \xi_{1}\vert^{p}\big)^{1/p}$ by $\Vert \xi_{1}\Vert_{L_{p}}$ for a random variable $\xi_{1}$. Set $\alpha^{*}=\alpha/(\alpha-1)$ as the conjuate exponent of $\alpha$. Define $M_{F}(\mathcal{A}):=\sup_{A\in\mathcal{A}}\Vert A\Vert_{F}$ and $M_{l_{2}\to l_{\alpha^{*}}}(\mathcal{A}):=\sup_{A\in\mathcal{A}}\Vert A\Vert_{l_{2}\to l_{\alpha^{*}}}$ (see \eqref{Eq_norms_matrix} below). We also denote Talagrand's $\gamma_{\alpha}$-functional with respect to $(\mathcal{A}, \Vert\cdot\Vert_{l_{2}\to l_{\alpha^{*}}})$ by $\gamma_{\alpha}(\mathcal{A}, \Vert\cdot\Vert_{l_{2}\to l_{\alpha^{*}}})$ (see (\ref{Talagrand's_functional}) below). Let
  \begin{align}
  	\Gamma(\alpha, \mathcal{A})=\gamma_{2}(\mathcal{A}, \Vert\cdot\Vert_{l_{2}\to l_{2}})+\gamma_{\alpha}(\mathcal{A}, \Vert\cdot\Vert_{l_{2}\to l_{\alpha^{*}}})\nonumber
  \end{align}
  and 
  \begin{align}
  	U_{1}(\alpha)=&\Gamma(\alpha, \mathcal{A}) \big(\Gamma(\alpha, \mathcal{A})+M_{F}(\mathcal{A}) \big),\nonumber\\
  	 U_{2}(\alpha)=&M_{l_{2}\to l_{2}}(\mathcal{A})\big(\Gamma(\alpha, \mathcal{A})+M_{F}(\mathcal{A})\big) ,\nonumber\\
  	U_{3}(\alpha)=&M_{l_{2}\to l_{\alpha^{*}}}(\mathcal{A})\big(\Gamma(\alpha, \mathcal{A})+M_{F}(\mathcal{A})\big) .\nonumber
  \end{align}

Assume $\xi_{i}$ are independent centered subgaussian variables and let $L=\max_{i}\Vert\xi\Vert_{\Psi_{2}}$. Krahmer et al \cite{Rauhut_CPAM} show for $t\ge 0$,
\begin{align}\label{Eq_Krahmer}
	&\textsf{P}\Big\{ \sup_{A\in\mathcal{A}}\big\vert\Vert A\xi\Vert_{2}^{2}-\textsf{E}\Vert A\xi\Vert_{2}^{2}  \big\vert\ge CL^{2} U_{1}(2)+t  \Big\}\nonumber\\
	\le& 2\exp\Big(-c\min\Big\{\frac{t^{2}}{L^{4}U_{2}^{2}(2)}, \frac{t}{L^{2}M^{2}_{l_{2}\to l_{2}}(\mathcal{A})}\Big\}  \Big),
\end{align}  
where $\mathcal{A}$ is a family of  $m\times n$ fixed matrices.

Also, in the subgaussian case, Adamczak et al \cite{Adamczak_AIHP_2} prove for $t\ge 0$
\begin{align}\label{Eq_latala}
	\textsf{P}\Big\{ Z_{\mathcal{A}}(\xi)>CL^{2}M(\mathcal{A})+t\Big\}
	\le 2\exp\Big(-c\min\Big\{\frac{t^{2}}{L^{4}T_{1}^{2}(\mathcal{A})}, \frac{t}{L^{2}M_{l_{2}\to l_{2}}(\mathcal{A})}  \Big\} \Big).
\end{align}
Here,
\begin{align}
	T_{1}(\mathcal{A})=\max\Big\{ \sup_{\Vert x\Vert_{2}\le 1}\textsf{E}\sup_{A\in\mathcal{A}}\vert \sum_{i\neq j}x_{i}a_{ij}g_{j}\vert, M_{F}(\mathcal{A})  \Big\}\nonumber
\end{align}
and 
\begin{align}
	M(\mathcal{A})=\textsf{E}\sup_{A\in \mathcal{A}}\big\vert \sum_{i,j}a_{ij}g_{i}g_{j}-\textsf{E}\sum_{i,j}a_{ij}g_{i}g_{j}\big\vert +\textsf{E}\sup_{A\in \mathcal{A}}\vert \sum_{i\neq j}a_{ij}g_{ij}\vert\nonumber,
\end{align}
where $\{g_{i}, g_{ij}: 1\le i, j\le n \}$ is a sequence of independent standard Gaussian variables and $\mathcal{A}$ is a family of fixed symmetric matrices.

All the results we mentioned above except \eqref{Eq_concentration_Talagrand} consider the nonhomogeneous chaos generated by the polynomial \eqref{Eq_polynomial}. There are a lot of exciting papers exploring the properties of homogeneous tetrahedral chaoses, such as \cite{Adamczak_AIHP,Meller_Studia_math,Meller_arxiv}. We refer  interested readers to these papers and the references therein for more information.

  
  Our main result provides a $p$-th moment bound for the suprema of a nonhomogeneous log-concave-tailed chaos process generated by \eqref{Eq_polynomial}. This bound can lead to some uniform Hanson-Wright type deviation inequalities for $\alpha$-subexponential entries, $1\le \alpha\le 2$.
  The main tools for proving this result are a novel decoupling inequality (see Proposition \ref{Prop_decoupling}), the generalized majorizing measure theorem for log-concave-tailed canonical processes (see Lemma \ref{Lemma_chaining}), and a chaining argument appearing in \cite{Latala_EJP,Mendelson_JFA}.

  \begin{mytheo}\label{Theo_logconcave_deviation}
  	Let $\xi=(\xi_{1},\cdots, \xi_{n})$ be a random vector whose entries are independent centered variables with log-concave tails, namely the functions $U_{i}(x):=-\log \textsf{P}\{\vert\xi_{i}\vert\ge x\}$ are convex. Let $\mathcal{A}$ be a family of fixed $n\times n$ matrices. Assume that there is a finite positive constant $\beta_{0}$ such that for any $i\ge 1, x\ge 1$,
  	\begin{align}\label{Eq_regular_condition}
  		U_{i}(2x)\le \beta_{0}U_{i}(x), \quad U_{i}^{'}(0)\ge \frac{1}{\beta_{0}}.
  	\end{align}
  	Then, we have for $p\ge 1$
  	\begin{align}
  		\big\Vert \sup_{A\in \mathcal{A}}\big\vert S_{A}(\xi)-\textsf{E}S_{A}(\xi)\big\vert\big\Vert_{L_{p}}
  		\lesssim_{\beta_{0}}\big\Vert \textsf{E}_{\xi}\sup_{A\in \mathcal{A}}\big\vert \xi^\top  A\tilde{\xi}\big\vert \big\Vert_{L_{p}}+\sup_{A\in\mathcal{A}}\big\Vert  \xi^\top  A\tilde{\xi}\big\Vert_{L_{p}},\nonumber
  	\end{align}
  	where $\tilde{\xi}$ is an independent copy of $\xi$ and $\textsf{E}_{\xi}$ means that we only take expectations with respect to $\xi$.
  \end{mytheo}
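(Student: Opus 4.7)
The plan is to combine the novel decoupling inequality with a chaining argument, in two stages. First, I would apply Proposition \ref{Prop_decoupling} uniformly over $A \in \mathcal{A}$ to the centered chaos $S_A(\xi) - \textsf{E}S_A(\xi)$ and thereby reduce the problem to the fully decoupled bilinear form $\xi^\top A \tilde\xi$, where $\tilde\xi$ is an independent copy of $\xi$. Schematically,
\[
\bigl\Vert \sup_{A\in\mathcal{A}} \vert S_A(\xi) - \textsf{E}S_A(\xi)\vert \bigr\Vert_{L_p} \lesssim_{\beta_0} \bigl\Vert \sup_{A\in\mathcal{A}} \vert \xi^\top A \tilde\xi\vert \bigr\Vert_{L_p}.
\]
The regularity assumption (\ref{Eq_regular_condition}) should enter here in order to absorb the diagonal piece $\sum_i a_{ii}(\xi_i^2 - 1)$ (arising from the $f_2$-part of (\ref{Eq_polynomial})) into the decoupled off-diagonal form, at the cost of a constant depending only on $\beta_0$.

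For the second stage I would freeze $\tilde\xi$ and view $A \mapsto \xi^\top A \tilde\xi = \sum_i \xi_i (A\tilde\xi)_i$ as a linear canonical process in $\xi$ indexed by $\mathcal{A}$. Its increments $\xi^\top(A-B)\tilde\xi$ are weighted sums of independent log-concave-tailed variables, with natural distances on $\mathcal{A}$ governed by the $U_i$. Combining Lemma \ref{Lemma_chaining} with the $p$-th moment chaining framework of \cite{Latala_EJP, Mendelson_JFA} should give, for each realization of $\tilde\xi$, a conditional Hoffmann-J\o rgensen-type bound
\[
\bigl\Vert \sup_{A\in\mathcal{A}} \vert \xi^\top A \tilde\xi\vert \bigr\Vert_{L_p(\xi)} \lesssim_{\beta_0} \textsf{E}_\xi \sup_{A\in\mathcal{A}} \vert \xi^\top A \tilde\xi\vert + \sup_{A\in\mathcal{A}} \bigl\Vert \xi^\top A \tilde\xi \bigr\Vert_{L_p(\xi)}.
\]
Taking $L_p$ in $\tilde\xi$ on both sides and using Fubini handles the first term directly. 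For the second term one needs a further application of chaining in $\tilde\xi$ (again via Lemma \ref{Lemma_chaining} and the bilinear structure) in order to replace the $L_p(\tilde\xi)$-norm of $\sup_A \Vert \xi^\top A \tilde\xi\Vert_{L_p(\xi)}$ by $\sup_A \Vert \xi^\top A \tilde\xi\Vert_{L_p}$, up to an additive term that is absorbed back into $\Vert \textsf{E}_\xi \sup_A \vert \xi^\top A \tilde\xi\vert \Vert_{L_p}$. Assembling the two stages then yields the desired estimate.

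I expect the decoupling step to be the main obstacle. Decoupling a nonhomogeneous chaos with general non-Gaussian entries does not follow from the standard Gaussian symmetrization arguments of \cite{Arcones_Gine}, because the diagonal correction $\sum_i a_{ii}(\xi_i^2 - 1)$ has no clean Hermite-orthogonal image in the off-diagonal decoupled form; a novel argument, together with the $\beta_0$-regularity of the $U_i$, is required to absorb it. The chaining steps, by contrast, are technically involved but relatively routine once Lemma \ref{Lemma_chaining} and the regularity condition (\ref{Eq_regular_condition}) are in hand.
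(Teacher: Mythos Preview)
Your overall architecture---decouple via Proposition~\ref{Prop_decoupling}, then chain conditionally on $\tilde\xi$ using Lemma~\ref{Lemma_chaining}---matches the paper exactly. Two points need correcting, one minor and one more serious.

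First, the role of $\beta_0$. The decoupling step does \emph{not} use the regularity assumption~(\ref{Eq_regular_condition}); Proposition~\ref{Prop_decoupling} only needs the tail comparison $\textsf{P}\{\xi_i^2\ge t\}\le c\,\textsf{P}\{c|\eta_i\tilde\eta_i|\ge t\}$, and for log-concave tails this follows from convexity alone (Remark~\ref{Rem_decoupling_inequality_2}(i)). The $\beta_0$-condition enters only in Lemma~\ref{Lemma_chaining}, where it guarantees the two-sided majorizing measure theorem. In particular the crucial step in the chaining is the use of the \emph{lower} bound in Lemma~\ref{Lemma_chaining}: after bounding the conditional $L_p(\xi)$-norm of the telescoped chain by the generic-chaining functional $\inf\sup_A\sum_n\Delta_{2^n}(T_n(A))$, the lower bound converts this back into $\textsf{E}_\xi\sup_A|\xi^\top A\tilde\xi|$. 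You allude to Lemma~\ref{Lemma_chaining} but do not flag that the lower bound is what makes the first term come out right.

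Second, your handling of the ``near-net'' term has a gap. After your conditional Hoffmann--J\o rgensen bound and taking $L_p(\tilde\xi)$, you face
\[
\Bigl\Vert\,\sup_{A\in\mathcal{A}}\bigl\Vert \xi^\top A\tilde\xi\bigr\Vert_{L_p(\xi)}\Bigr\Vert_{L_p(\tilde\xi)},
\]
and you propose a ``further chaining in $\tilde\xi$''. But the map $\tilde\xi\mapsto\sup_A\Vert\xi^\top A\tilde\xi\Vert_{L_p(\xi)}$ is not a linear canonical process, so Lemma~\ref{Lemma_chaining} does not apply directly; and the residual expectation term such a chaining would produce is not obviously absorbed. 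The paper avoids this entirely by a simpler route: the chain is truncated at level $l$ with $2^l\le 4p$, so the net $\mathcal{A}_l$ is a \emph{deterministic} set of at most $16^p$ points, and one bounds $\Vert\sup_A|\xi^\top\pi_l(A)\tilde\xi|\Vert_{L_p}$ by a union bound over $\mathcal{A}_l$ at the \emph{full} $L_p$ level (over both $\xi$ and $\tilde\xi$), yielding $16\sup_A\Vert\xi^\top A\tilde\xi\Vert_{L_p}$ directly. No second chaining is needed. The point is not to pass through the conditional $L_p(\xi)$-norm for the net part at all.
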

  \begin{myrem}
  	 The regularity condition (\ref{Eq_regular_condition}) for $U_{i}(x)$ ensures that the functions do not grow too fast, which is needed in the generalized majorizing measure theorem (see Lemma \ref{Lemma_chaining}).
  \end{myrem}
 
  Theorem \ref{Theo_logconcave_deviation} yields the following uniform Hanson-Wright deviation ineauality with some classic concentration inequalities (see Theorem 4.19 in \cite{Ledoux_book}). 
 
  \begin{mycor}\label{Cor_logconcave}
  	Suppose that the entries of $\xi=(\xi_{1}, \cdots,\xi_{n})^\top$ are independent centered $\alpha$-subexponential variables ($1\le \alpha\le 2$) and $\eta=(\eta_{1},\cdots, \eta_{n})$ is a random vector whose entries are i.i.d. random variables with density $c(\alpha)\exp(-\vert x\vert^{\alpha})$ ($c(\alpha)$ is a normalization constant). Let $\mathcal{A}$ be a set containing fixed $n\times n$ symmetric  matrices. Set $L=\max_{i}\Vert \xi_{i}\Vert_{\Psi_{\alpha}}$ and $$T(\mathcal{A})=\max\Big\{ \textsf{E}\sup_{A\in\mathcal{A}}\Vert A\eta\Vert_{2}, M_{F}(\mathcal{A})  \Big\}.$$
  	Then, we have for $p\ge 1$
  	\begin{align}
  			\Big\Vert \sup_{A\in\mathcal{A}}\big\vert S_{A}(\xi)-\textsf{E}S_{A}(\xi)\big\vert \Big\Vert_{L_{p}}\lesssim_{\alpha, L}&\textsf{E}\sup_{A\in\mathcal{A}}\vert \eta^\top  A\tilde{\eta}\vert+\sqrt{p}T(\mathcal{A})\nonumber\\
  			&+p^{1/\alpha}\textsf{E}\sup_{A\in\mathcal{A}}\Vert A\eta\Vert_{\alpha^{*}}+p^{2/\alpha}M_{l_{2}\to l_{2}}(\mathcal{A})\nonumber.
  	\end{align}
  	Moreover, we have for $t\ge 0$
  	\begin{align}\label{Eq_1.4}
  		&\textsf{P}\Big\{  \sup_{A\in \mathcal{A}}\big\vert S_{A}(\xi)-\textsf{E}S_{A}(\xi) \big\vert>C(\alpha)L^{2}\Big(\textsf{E}\sup_{A\in\mathcal{A}}\vert \eta^\top  A\tilde{\eta}\vert+t\Big) \Big\}\nonumber\\
  		\le& C_{1}(\alpha)\exp\Big( -\min\Big\{\big(\frac{t}{T(\mathcal{A})}\big)^{2}, \big(\frac{t}{\textsf{E}\sup_{A\in\mathcal{A}}\Vert A\eta\Vert_{\alpha^{*}}}\big)^{\alpha},  \big(\frac{t}{M_{l_{2}\to l_{2}}(\mathcal{A})} \big)^{\alpha/2}\Big\}    \Big),
  	\end{align}
  	where  $\tilde{\eta}$ is an independent copy of $\eta$ and $M_{l_{2}\to l_{2}}(\mathcal{A})=\sup_{A\in\mathcal{A}}\Vert A\Vert_{l_{2}\to l_{2}}$ as defined before.
  \end{mycor}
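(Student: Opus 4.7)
The plan is to apply Theorem \ref{Theo_logconcave_deviation} and then control each of the two resulting moments using standard tools for $\alpha$-subexponential chaos. First I would verify the hypotheses of Theorem \ref{Theo_logconcave_deviation}: for $1\le\alpha\le 2$ and $\xi_{i}$ centered $\alpha$-subexponential with $\Vert\xi_{i}\Vert_{\Psi_{\alpha}}\le L$, the tail function $U_{i}(x)=-\log\textsf{P}\{\vert\xi_{i}\vert\ge x\}$ is convex on $[0,\infty)$, satisfies $U_{i}(2x)\lesssim_{\alpha}U_{i}(x)$, and has $U_{i}^{\prime}(0)\gtrsim 1/L$; hence (\ref{Eq_regular_condition}) holds with $\beta_{0}$ depending only on $\alpha$ and $L$.

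For the ``pointwise'' term $\sup_{A\in\mathcal{A}}\Vert\xi^{\top}A\tilde{\xi}\Vert_{L_{p}}$ I would use the standard moment estimate for a decoupled bilinear form in independent $\alpha$-subexponential vectors, provable by a truncation argument that splits each $\xi_{i}$ into its $L$-bounded part and its heavy tail:
\begin{align*}
\sup_{A\in\mathcal{A}}\Vert\xi^{\top}A\tilde{\xi}\Vert_{L_{p}}\lesssim_{\alpha}L^{2}\Big(\sqrt{p}\,M_{F}(\mathcal{A})+p^{1/\alpha}M_{l_{2}\to l_{\alpha^{*}}}(\mathcal{A})+p^{2/\alpha}M_{l_{2}\to l_{2}}(\mathcal{A})\Big).
\end{align*}
The three contributions correspond to the Gaussian regime (typical entries, yielding the Frobenius norm), a single-spike regime (one coordinate of $\xi$ or $\tilde{\xi}$ is atypically large, yielding $\Vert A\Vert_{l_{2}\to l_{\alpha^{*}}}$), and a two-spike regime (yielding the operator norm).

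To bound $\Vert\textsf{E}_{\xi}\sup_{A}\vert\xi^{\top}A\tilde{\xi}\vert\Vert_{L_{p}(\tilde{\xi})}$ I would split
\begin{align*}
\textsf{E}_{\xi}\sup_{A}\vert\xi^{\top}A\tilde{\xi}\vert\le\textsf{E}\sup_{A}\vert\xi^{\top}A\tilde{\xi}\vert+\Big(\textsf{E}_{\xi}\sup_{A}\vert\xi^{\top}A\tilde{\xi}\vert-\textsf{E}\sup_{A}\vert\xi^{\top}A\tilde{\xi}\vert\Big).
\end{align*}
The first summand compares to the canonical chaos via a contraction/symmetrization argument based on $\Vert\xi_{i}\Vert_{\Psi_{\alpha}}\le L$ and $\Vert\eta_{i}\Vert_{\Psi_{\alpha}}\asymp 1$, giving $\textsf{E}\sup_{A}\vert\xi^{\top}A\tilde{\xi}\vert\lesssim_{\alpha}L^{2}\textsf{E}\sup_{A}\vert\eta^{\top}A\tilde{\eta}\vert$. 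The second summand is a convex function of $\tilde{\xi}$ (a supremum of integrated absolute linear forms), so classical concentration for $\alpha$-subexponential vectors (Theorem 4.19 in \cite{Ledoux_book}) combined with a Talagrand-type chaining bound for the process $\tilde{\xi}\mapsto\sup_{A}\vert\xi^{\top}A\tilde{\xi}\vert$ controls its $L_{p}$ fluctuation by $\sqrt{p}\,T(\mathcal{A})+p^{1/\alpha}\textsf{E}\sup_{A}\Vert A\eta\Vert_{\alpha^{*}}+p^{2/\alpha}M_{l_{2}\to l_{2}}(\mathcal{A})$, after absorbing the residual $\xi$-Gaussian contribution into $T(\mathcal{A})$.

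Assembling these three estimates yields the displayed $L_{p}$ bound. The tail inequality (\ref{Eq_1.4}) then follows from it by the standard $L_{p}$-to-tail device ($\textsf{P}\{X>e\Vert X\Vert_{L_{p}}\}\le e^{-p}$, optimized over $p$), which converts a sum of the form $B_{0}+\sqrt{p}B_{1}+p^{1/\alpha}B_{2}+p^{2/\alpha}B_{3}$ into the three-regime minimum in the exponent. The main obstacle, I expect, is the concentration step in the third paragraph: pinning down the correct mixed canonical metric (blending $l_{2}$, $l_{\alpha^{*}}$ and operator-norm geometries) so that the chaining bound reproduces exactly the quantities $T(\mathcal{A})$, $\textsf{E}\sup_{A}\Vert A\eta\Vert_{\alpha^{*}}$ and $M_{l_{2}\to l_{2}}(\mathcal{A})$ appearing in the statement, rather than coarser $\gamma_{2}/\gamma_{\alpha}$ expressions.
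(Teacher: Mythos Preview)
There is a genuine gap at the very first step. An $\alpha$-subexponential variable need not have log-concave tails: the hypothesis $\Vert\xi_i\Vert_{\Psi_\alpha}\le L$ only says that $U_i(x)=-\log\textsf{P}\{\vert\xi_i\vert\ge x\}$ is eventually bounded below by $cx^\alpha$, not that it is convex (a bounded discrete variable, or a mixture with oscillating density, is a counterexample). Consequently Theorem~\ref{Theo_logconcave_deviation} does not apply to $\xi$ directly, and neither does the regularity condition \eqref{Eq_regular_condition}. The paper circumvents this by going through Proposition~\ref{Prop_decoupling}: since $\textsf{P}\{\xi_i^2\ge t\}\le c\,\textsf{P}\{c\vert\zeta_i\tilde\zeta_i\vert\ge t\}$ for $\zeta_i\sim\mathcal{W}_s(\alpha)$ (a consequence of Lemma~\ref{Lem_low bound}), the decoupling proposition replaces the $\alpha$-subexponential chaos $S_A(\xi)-\textsf{E}S_A(\xi)$ by the decoupled Weibull chaos $\zeta^\top A\tilde\zeta$, and it is to the \emph{Weibull} variables (which do have log-concave tails and satisfy \eqref{Eq_regular_condition}) that the chaining argument from the proof of Theorem~\ref{Theo_logconcave_deviation} is applied.

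A related issue occurs in your concentration step: Theorem~4.19 in \cite{Ledoux_book} is stated for the product measure with density proportional to $e^{-\vert x\vert^\alpha}$, not for arbitrary $\alpha$-subexponential vectors, so it must be applied to $\tilde\eta$ rather than $\tilde\xi$. The paper handles this by first using Lemma~\ref{Lem_Comparison} to pass from $\zeta$ to $\eta$, and then observes that $f(x)=\textsf{E}_\eta\sup_A\vert\eta^\top Ax\vert$ is simultaneously $\textsf{E}\sup_A\Vert A\eta\Vert_2$-Lipschitz in $\Vert\cdot\Vert_2$ and $\textsf{E}\sup_A\Vert A\eta\Vert_{\alpha^*}$-Lipschitz in $\Vert\cdot\Vert_\alpha$; Ledoux's inequality then gives the two-level tail for $f(\tilde\eta)$ directly, with no chaining needed. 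So the obstacle you anticipate in the last paragraph is not present once one works with the canonical $\eta$: the quantities $T(\mathcal{A})$ and $\textsf{E}\sup_A\Vert A\eta\Vert_{\alpha^*}$ are exactly the Lipschitz constants, and the $p^{2/\alpha}M_{l_2\to l_2}(\mathcal{A})$ term comes solely from the pointwise bound $\sup_A\Vert\zeta^\top A\tilde\zeta\Vert_{L_p}$ via Remark~\ref{Rem_explanations}.
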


We can further bound $\textsf{E}\sup_{A\in\mathcal{A}}\vert \eta^\top  A\tilde{\eta}\vert$ and $\textsf{E}\sup_{A\in\mathcal{A}}\Vert A\eta\Vert_{\alpha^{*}}$ appearing in Corollary \ref{Cor_logconcave} with $\gamma_{\alpha}$-functionals via chaining arguments, which is the next result. This result is a key ingredient to show the R.I.P. for the structured random matrices. Except for the application in proving the R.I.P. of some random matrices,  such type deviation inequalities are also used to study the dimensionality reduction of sparse Johnson–Lindenstrauss transform (see Bourgain et al. \cite{Bougain_gafa}). 

Recall the notations $U_{i}(\alpha), i=1, 2, 3$. The next result reads as follows:
\begin{mycor}\label{Theo_deviation}
	Let $\mathcal{A}$ be a family of fixed $m\times n$ matrices and $\xi$ be a random vector as in Corollary \ref{Cor_logconcave}. We have for $p\ge 1$
	\begin{align}
		\Big\Vert  \sup_{A\in \mathcal{A}}\big\vert \Vert A\xi\Vert_{2}^{2}-\textsf{E}\Vert A\xi\Vert_{2}^{2}  \big\vert\Big\Vert_{L_{p}}\lesssim_{\alpha, L}&U_{1}(\alpha)+\sqrt{p}U_{2}(\alpha)\nonumber\\
		&+p^{1/\alpha}U_{3}(\alpha)+p^{2/\alpha}M^{2}_{l_{2}\to l_{2}}(\mathcal{A}).\nonumber
	\end{align}
	 Moreover, we have for $t\ge 0$
	\begin{align}\label{Eq_remark1.6}
		&\textsf{P}\Big\{ \sup_{A\in \mathcal{A}}\big\vert \Vert A\xi\Vert_{2}^{2}-\textsf{E}\Vert A\xi\Vert_{2}^{2}  \big\vert>C(\alpha)L^{2}(U_{1}(\alpha)+t)\Big\}\nonumber\\
		\le& C_{1}(\alpha)\exp\Big(-\min\Big\{(\frac{t}{U_{2}(\alpha)})^{2},( \frac{t}{U_{3}(\alpha)})^{\alpha}, (\frac{t}{M_{l_{2}\to l_{2}}^{2}(\mathcal{A})})^{\alpha/2}\Big\}\Big).
	\end{align}
\end{mycor}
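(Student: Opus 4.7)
The strategy is to reduce Corollary \ref{Theo_deviation} to Corollary \ref{Cor_logconcave}. Observing that $\Vert A\xi\Vert_2^2-\textsf{E}\Vert A\xi\Vert_2^2=S_{A^\top A}(\xi)-\textsf{E}S_{A^\top A}(\xi)$ and that each $A^\top A$ is symmetric, I would apply Corollary \ref{Cor_logconcave} to the family $\mathcal{B}:=\{A^\top A:A\in\mathcal{A}\}$. This immediately yields
\begin{align*}
\bigl\Vert\sup_{A\in\mathcal{A}}\bigl\vert\Vert A\xi\Vert_2^2-\textsf{E}\Vert A\xi\Vert_2^2\bigr\vert\bigr\Vert_{L_p}\lesssim_{\alpha,L}Q_1+\sqrt{p}\,Q_2+p^{1/\alpha}Q_3+p^{2/\alpha}Q_4,
\end{align*}
where $Q_1=\textsf{E}\sup_{A}\vert\eta^\top A^\top A\tilde\eta\vert$, $Q_2=T(\mathcal{B})$, $Q_3=\textsf{E}\sup_{A}\Vert A^\top A\eta\Vert_{\alpha^*}$, and $Q_4=M_{l_2\to l_2}(\mathcal{B})$. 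The task then reduces to showing $Q_i\lesssim U_i(\alpha)$ for $i=1,2,3$ and $Q_4=M_{l_2\to l_2}^2(\mathcal{A})$.

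The identity for $Q_4$ is immediate from $\Vert A^\top A\Vert_{l_2\to l_2}=\Vert A\Vert_{l_2\to l_2}^2$. For $Q_2$ and $Q_3$ I would use elementary matrix-norm inequalities: $\Vert A^\top A\Vert_F\le\Vert A\Vert_{l_2\to l_2}\Vert A\Vert_F$ together with $\Vert A^\top A y\Vert_2\le\Vert A\Vert_{l_2\to l_2}\Vert Ay\Vert_2$ reduce $Q_2$ to estimating $M_{l_2\to l_2}(\mathcal{A})\cdot\textsf{E}\sup_{A}\Vert A\eta\Vert_2$; an analogous factorization passing through $\Vert A^\top z\Vert_{\alpha^*}\le\Vert A^\top\Vert_{l_2\to l_{\alpha^*}}\Vert z\Vert_2$ and duality reduces $Q_3$ to the same chaining quantity multiplied by $M_{l_2\to l_{\alpha^*}}(\mathcal{A})$. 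It therefore suffices to establish the single bound $\textsf{E}\sup_{A\in\mathcal{A}}\Vert A\eta\Vert_2\lesssim\Gamma(\alpha,\mathcal{A})+M_F(\mathcal{A})$, and analogously to control the bilinear quantity $Q_1$.

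These are generic-chaining estimates. Since $\eta$ has density proportional to $\exp(-\vert x\vert^\alpha)$, for each $x$ the linear form $A\mapsto\langle\eta,Ax\rangle$ has mixed subgaussian / $\alpha$-subexponential $L_p$-norm bounded by $\sqrt{p}\,\Vert A-A'\Vert_{l_2\to l_2}\Vert x\Vert_2+p^{1/\alpha}\Vert A-A'\Vert_{l_2\to l_{\alpha^*}}\Vert x\Vert_2$. The generalized majorizing-measure theorem (Lemma \ref{Lemma_chaining}) then delivers the claimed bound on $\textsf{E}\sup_{A}\Vert A\eta\Vert_2$. For $Q_1$ I would perform a two-level chaining in the spirit of \cite{Latala_EJP,Mendelson_JFA}: condition on $\tilde\eta$, view $\langle A\eta,A\tilde\eta\rangle=\langle\eta,A^\top A\tilde\eta\rangle$ as a linear form in $\eta$ with coefficient vector $A^\top A\tilde\eta$, chain over $A$ using the increment bounds above (now involving $\Vert A^\top A\tilde\eta-A'^\top A'\tilde\eta\Vert_{2}$ and $\Vert\cdot\Vert_{\alpha^*}$, themselves dominated by operator norms times $\Vert A\tilde\eta\Vert_2$), and finally integrate out $\tilde\eta$ using the linear-process bound. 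This yields $Q_1\lesssim\Gamma(\alpha,\mathcal{A})\bigl(\Gamma(\alpha,\mathcal{A})+M_F(\mathcal{A})\bigr)=U_1(\alpha)$.

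With the $L_p$-moment bound in hand, the tail inequality \eqref{Eq_remark1.6} follows by Chebyshev's inequality with an optimized $p$: equating each of $\sqrt{p}\,U_2(\alpha)$, $p^{1/\alpha}U_3(\alpha)$ and $p^{2/\alpha}M_{l_2\to l_2}^2(\mathcal{A})$ with $t$ in turn produces exactly the three regimes appearing in the minimum. The main obstacle is the two-parameter chaining for $Q_1$: one must simultaneously track the $\gamma_2$ and $\gamma_\alpha$ contributions and carefully control the cross term $\Vert A\tilde\eta\Vert_2$ that emerges after conditioning. The analogous step in the subgaussian case \cite{Rauhut_CPAM} is already delicate, and extending it to mixed subgaussian / $\alpha$-subexponential tails is the real technical content of the argument.
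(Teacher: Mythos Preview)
Your plan is correct and coincides with the paper's argument in almost every respect: reduce to Corollary~\ref{Cor_logconcave} applied to $\mathcal{B}=\{A^\top A:A\in\mathcal{A}\}$, then bound each $Q_i$ by $U_i(\alpha)$ via the elementary operator-norm factorisations you list, and deduce the tail bound from the moment bound via Lemma~\ref{Lem_Moments}. Your two-level chaining for $Q_1$ is exactly the content of the paper's Lemma~\ref{Theo_bound_decoupled}.

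The one place where the paper proceeds differently is the key estimate $\textsf{E}\sup_{A\in\mathcal{A}}\Vert A\eta\Vert_2\lesssim\Gamma(\alpha,\mathcal{A})+M_F(\mathcal{A})$. You propose to get this directly from Lemma~\ref{Lemma_chaining}, but note that $\sup_A\Vert A\eta\Vert_2$ is not a linear canonical process in $A$; making this rigorous requires either chaining on the enlarged index set $\{A^\top x:A\in\mathcal{A},\ \Vert x\Vert_2\le1\}$ and relating its $\gamma$-functionals back to those of $\mathcal{A}$, or else running a generic-chaining argument for the nonlinear process $A\mapsto\Vert A\eta\Vert_2$ with increments controlled by concentration for Lipschitz functions. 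The paper sidesteps this entirely by a bootstrap: it feeds $\textsf{E}\sup_A\Vert A\tilde\zeta\Vert_2^2$ back through Proposition~\ref{Prop_decoupling} and Lemma~\ref{Theo_bound_decoupled} (applied with $p=1$) to obtain the self-bounding inequality
\[
\bigl(\textsf{E}\sup_A\Vert A\tilde\zeta\Vert_2\bigr)^2\le\textsf{E}\sup_A\Vert A\tilde\zeta\Vert_2^2\lesssim_\alpha\Gamma(\alpha,\mathcal{A})\cdot\textsf{E}\sup_A\Vert A\tilde\zeta\Vert_2+M_F^2(\mathcal{A}),
\]
and then solves this quadratic. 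This reuses machinery already in place and avoids any new chaining step. Your direct route should also go through, but it is more work than you indicate; the paper's bootstrap is the cleaner device here.
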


\textbf{Discussions for our results:}

(i) If all matrices in $\mathcal{A}$ are symmetric and their diagonal entries are 0, the upper bound in Theorem \ref{Theo_logconcave_deviation} is optimal (up to a universal constant). Indeed, we have by Theorem 3.1.1 in \cite{Gine_decoupling_book}
\begin{align}
	\big\Vert \sup_{A\in \mathcal{A}}\big\vert S_{A}(\xi)-\textsf{E}S_{A}(\xi)\big\vert\big\Vert_{L_{p}}\asymp& \big\Vert \sup_{A\in \mathcal{A}}\big\vert \xi^\top A\tilde{\xi}\big\vert\big\Vert_{L_{p}}\nonumber\\
	\ge& \max\Big\{\big\Vert \textsf{E}_{\xi}\sup_{A\in \mathcal{A}}\big\vert \xi^\top  A\tilde{\xi}\big\vert \big\Vert_{L_{p}}, \sup_{A\in\mathcal{A}}\big\Vert  \xi^\top  A\tilde{\xi}\big\Vert_{L_{p}}\Big\}.\nonumber
\end{align}

(ii) Assume that $\mathcal{A}$ contains only one symmetric matrix and $\xi_{1}, \cdots, \xi_{n}\stackrel{i.i.d.}{\sim}\mathcal{W}_{s}(\alpha)$ (i.e. $\xi_{i}$ are symmetric and $\textsf{P}\{\vert\xi_{i}\vert>t  \}=e^{-t^{\alpha}}$), $1\le \alpha\le 2$. Combined with Lemmas \ref{Lem_Moments} and \ref{Lem_alpha_2}, Theorem \ref{Theo_logconcave_deviation} yields for $t\ge 0$
\begin{align}\label{Eq_remark_ii}
	\textsf{P}\big\{ \vert \xi^\top A\xi-\textsf{E}\xi^\top A\xi\vert> C(\alpha)L^{2}t\big\}\le C_{1}(\alpha)\exp\big(-\phi_{2}(A, \alpha, t)\big),
\end{align}
where 
\begin{align}
	\phi_{2}(A, \alpha, t)
	=\min\Big\{&(\frac{t}{\Vert A\Vert_{F}})^{2}, \frac{t}{\Vert A\Vert_{l_{2}\to l_{2}}},\nonumber\\
	& (\frac{t}{\Vert A\Vert_{l_{\alpha^{*}}(l_{2})}})^{\alpha} , (\frac{t}{\Vert A\Vert_{l_{2}\to l_{\alpha^{*}}}})^{\frac{2\alpha}{\alpha+2}}, (\frac{t}{\Vert A\Vert_{l_{\alpha}\to l_{\alpha^{*}}}})^{\frac{\alpha}{2}}\Big\}.\nonumber
\end{align}
\eqref{Eq_remark_ii} is also valid in the case where $\xi_{i}$ are independent centered $\alpha$-subexponential variables ($1\le \alpha\le 2$) due to Proposition \ref{Prop_decoupling}, which recovers the classic Hanson-Wright inequality (\ref{Eq_hansonwright_classic}) when $\alpha=2$. One can refer to \eqref{Eq_norms_matrix} for the matrix norms appearing above.

  	(iii) Compared with (\ref{Eq_Klochkov}), Corollary \ref{Cor_logconcave} (when $\alpha=2$) can behaves better in some cases. For example, let $\xi_{i}, 1\le i\le n$ be i.i.d. standard Gaussian variables and  $\mathcal{A}$ contains only one identity matrix. Note that $\max_{i}\Vert\xi_{i}\Vert_{\Psi_{2}}=\sqrt{8/3}$ and $\big\Vert\max_{i}\vert\xi_{i}\vert\big\Vert_{\Psi_{2}}\asymp\sqrt{\log n}$. A direct calculation yields $\textsf{E}\Vert \xi\Vert_{2}\asymp \sqrt{n}$.

  	Hence, \eqref{Eq_Klochkov} yields that
  	\begin{align}
  		\Big\Vert  \Vert \xi\Vert_{2}^{2} \Big\Vert_{L_{p}}\lesssim n+ \sqrt{pn\log n}+p\log n,\nonumber
  	\end{align}
  which has an extra $\log n$ term compared with the bound deduced by Corollary \ref{Cor_logconcave}. 	
  	
  	(iv) Let $\{g_{i}, g_{ij}: 1\le i, j\le n\}$ be i.i.d. standard normal variables and $A=(a_{ij})_{n\times n}$ be a fixed symmetric matrix. Let $g=(g_{1}, \cdots, g_{n})$ and $\tilde{g}$ be an independent copy of $g$.
   Compared with the result in Corollary \ref{Cor_logconcave} (when $\alpha=2$), \eqref{Eq_latala} has a better tail probability due to that
  \begin{align}
  	\sup_{\Vert x\Vert_{2}\le 1}\textsf{E}\sup_{A\in\mathcal{A}}\vert \sum_{i\neq j}x_{i}a_{ij}g_{j}\vert\le \textsf{E}\sup_{A\in \mathcal{A}}\Vert  Ag\Vert_{2}.\nonumber
  \end{align}
  However, the advantage of Corollary \ref{Cor_logconcave} is that $$\textsf{E}\sup_{A\in \mathcal{A}}\big\vert S_{A}(g)-\textsf{E}S_{A}(g)\big\vert +\textsf{E}\sup_{A\in \mathcal{A}}\vert \sum_{i\neq j}a_{ij}g_{ij}\vert\gtrsim \textsf{E}\sup_{A\in\mathcal{A}}\vert g^\top A\tilde{g}\vert,$$ which is due to that (see (2.9) in \cite{Arcones_Gine})
  \begin{align}
  	\textsf{E}\sup_{A\in \mathcal{A}}\big\vert S_{A}(g)-\textsf{E}S_{A}(g)\big\vert\asymp\textsf{E}\sup_{A\in\mathcal{A}}\vert g^\top  A\tilde{g}\vert.\nonumber
  \end{align}

(v) In the case $\alpha=2$, Corollary \ref{Theo_deviation} recovers the result \eqref{Eq_Krahmer}.

\subsection{ R.I.P. of Structured Random Matrices}
\subsubsection{Compressive Sensing}

Compressive sensing is a technique to recover sparse vectors in high dimensions from incomplete information with efficient algorithms. It has been applied in many aspects, such as signal and image processing \cite{Raught_book,Raught_zongshu}.

Recall that, a vector $x\in \mathbb{R}^{n}$ is $s$-sparse if satisfying $\Vert x\Vert_{0}:= \vert \{l: x_{l}\neq0  \}\vert\le s$. The recovery task is to obtain $x$ from the observed information
\begin{align}
	y=\Phi x,\nonumber
\end{align}
where $\Phi\in \mathbb{R}^{m\times n}$ is given and called  the measurement matrix. Note that this system is determined when $m=n$. Hence, we are interested in the case $m\ll n$. One of the main recovery approaches is based on $l_{1}$-minimization, 
\begin{align}
	\min_{z} \Vert z\Vert_{1} \quad \text{s.t.} \quad\Phi z=y,\nonumber
\end{align}
where $\Vert z\Vert_{p}$ is the $l_{p}$-norm. A natural question arises: whether the solution to the above convex optimization problem implies an exact recovery. 

We say that, an $m\times n$ matrix $A$ possesses the restricted isometry property (R.I.P.) with parameters $\delta$ and $s$ if satisfying for all $s$-sparse $x$
\begin{align}\label{Eqisometry}
	(1-\delta)\Vert x\Vert_{2}^{2}\le \Vert A x\Vert_{2}^{2}\le (1+\delta)\Vert x\Vert_{2}^{2}.
\end{align}
And we define the restricted isometry constant $\delta_{s}$ as the smallest number satisfying the inequality (\ref{Eqisometry}). Cand\`{e}s and Tao \cite{Tao_recovery} show that the R.I.P. of the measurement matrix $\Phi$ implies an exact recovery.

It is important to note that most of the known optimal measurement matrices are random matrices. For example, Gaussian or Bernoulli random matrices or, more generally, subgaussian random matrices. The entries of a Bernoulli random matrix are i.i.d. variables taking values $1/\sqrt{m}$ and $-1/\sqrt{m}$ with equal probabilities, and a Gaussian matrix has entries as i.i.d. random normal variables  with mean $0$ and variance $1/m$. With high probability, the above random matrices satisfy the R.I.P.  \cite{Gaussian_Proof},  allowing an exact sparse recovery with $l_{1}$-minimization.

Although Gaussian and Bernoulli matrices allow an exact sparse recovery according to $l_{1}$-minimization, they have limited use in applications. On the one hand, physical or other constraints of the application define the measurement matrix without leaving us the freedom to design anything. Hence, it is often not justifiable that the matrix entries are i.i.d. Gaussian or Bernoulli variables. On the other hand, no fast matrix multiplication is available for such unstructured matrices, making the recovery algorithms run very slowly. Hence, Gaussian or Bernoulli matrices are useless for large scale problems.

Due to these observations, in this paper we consider measurement matrices with the following features. Firstly, we study the measurement matrix with $\alpha$-subexponential ($1\le \alpha\le 2$) entries. 
We note that a random matrix with independent standard $\alpha$-subexponential ($1\le \alpha\le 2$) entries satisfies R.I.P. with high probability (see a detailed proof in Section 5.1)

Secondly, we study random matrices with more general structure. In particular, we consider two types of structured random matrices, namely partial random circulant matrices and time-frequency structured random matrices, which have been studied in several works, including \cite{Rauhut_CPAM,Rauhut_ACHA}.

\subsubsection{Partial Random Circulant Matrices}

Recall the definition of cyclic subtraction: $j\ominus k=(j-k)\mod n$. Denote the circular convolution of two vectors $x, y\in \mathbb{R}^{n}$ by $z*x=((z*x)_{1}, \cdots, (z*x)_{n})^\top$, where
\begin{align}
	(z*x)_{j}:=\sum_{k=1}^{n}z_{j\ominus k}x_{k}.\nonumber
\end{align}
We define the circulant matrix $H=H_{z}\in \mathbb{R}^{n\times n}$ generated by $z$ with entries $H_{jk}=z_{j\ominus k}$. Equivalently, $Hx=z*x$ for every $x\in \mathbb{R}^{n}$. 

Let $\Omega\subset \{1,\cdots, n\}$ be a fixed set with $\vert \Omega\vert=m$. Denote by $R_{\Omega}: \mathbb{R}^{n}\to \mathbb{R}^{m}$  the operator restricting a vector $x\in \mathbb{R}^{n}$ to its entries in $\Omega$. We define the partial circulant matrix generated by $z$ as follows
\begin{align}
	\Phi=\frac{1}{\sqrt{m}}R_{\Omega}H_{z}.\nonumber
\end{align}
If the generating vector $z$ is random, we call $\Phi$ a partial random circulant matrix. 

Let $\varepsilon=(\varepsilon_{1},\cdots, \varepsilon_{n})^\top$ be a Rademacher vector: that is a random vector with independent Rademacher entries. Rauhut, Romberg, and Tropp \cite{Rauhut_ACHA} proved that the $m\times n$ partial random circulant matrix $\Phi$ generated by $\varepsilon$ satisfies R.I.P., that is for fixed $s\le n$ and $\delta\in (0, 1)$, the restricted isometry constants satisfy $\delta_{s}\le \delta$ with high probability if 
\begin{align}
	m\ge C\max\{\delta^{-1}s^{3/2}\log^{3/2}n, \delta^{-2}s\log^{2}n\log^{2}s  \}.\nonumber
\end{align}

Krahmer, Mendelson, and Rauhut \cite{Rauhut_CPAM} improved the above result. They proved that 
\begin{align}
	m\ge C\delta^{-2}s\log^{2}n\log^{2}s\nonumber
\end{align}
is  enough to ensure $\delta_{s}\le \delta$ with high probability. In fact, they consider a more general case when $\Phi$ is generated by a random vector $\xi$ whose entries are independent sub-Gaussian variables with mean $0$ and variance $1$.

The following result establishes the R.I.P. for partial random circulant matrices generated by a standard $\alpha$-subexponential random vector $\eta=(\eta_{1},\cdots,\eta_{n})^\top$ ($1\le \alpha\le 2$), i.e., $\{ \eta_{i}, i\le n\}$ are  independent $\alpha$-subexponential variables with mean $0$ and variance $1$:

\begin{mytheo}\label{Theo_RIP}
	Let $\Phi$ be an $m\times n$ partial random circulant matrix generated by a standard $\alpha$-subexponential random vector $\eta$, $1\le \alpha\le 2$. Set $L=\max_{i\le n}\Vert \eta_{i}\Vert_{\Psi_{\alpha}}$. Then, under the condition $m\ge c_{1}(\alpha, L)\delta^{-2}f_{1}(s, n)$, the restricted isometry constant $\delta_{s}$ satisfies
	\begin{align}
		\textsf{P}\{ \delta_{s}\le \delta \}\ge 1-\exp(-c_{0}(\alpha, L)f_{2}(s, n)).       \nonumber
	\end{align}
	Here, 
	\begin{align}
		f_{1}(s, n)=\max\big\{(s^{2/\alpha}\log^{4/\alpha} n),     (s\log^{2}s\log^{2}n)\big\}\nonumber
	\end{align}
	and 
	\begin{align}
		f_{2}(s, n)=\max\big\{ (s^{(2-\alpha)/2}\log^{2} n), (\log^{\alpha}s\log^{\alpha}n)  \big\} .\nonumber 
	\end{align}
	
\end{mytheo}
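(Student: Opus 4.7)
I plan to mimic the strategy of Krahmer--Mendelson--Rauhut \cite{Rauhut_CPAM}, which reduces the R.I.P. to a uniform deviation inequality for a chaos process; the new input here is Corollary \ref{Theo_deviation} in place of their subgaussian deviation bound. By definition,
$$\delta_s=\sup_{x\in D_{s,n}}\bigl|\|\Phi x\|_2^{2}-\|x\|_2^{2}\bigr|,\qquad D_{s,n}:=\{x\in\mathbb{R}^n:\|x\|_0\le s,\,\|x\|_2\le 1\}.$$
The commutativity of circular convolution gives $H_\eta x=H_x\eta$, so $\Phi x=V_x\eta$ with $V_x:=m^{-1/2}R_\Omega H_x$ deterministic and linear in $x$. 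Because $\eta$ has independent mean-zero unit-variance entries, $\mathsf{E}\|V_x\eta\|_2^{2}=\|V_x\|_F^{2}=\|x\|_2^{2}$, so with $\mathcal{A}:=\{V_x:x\in D_{s,n}\}$ the restricted isometry constant can be written as $\delta_s=\sup_{A\in\mathcal{A}}\bigl|\|A\eta\|_2^{2}-\mathsf{E}\|A\eta\|_2^{2}\bigr|$, which is exactly the quantity controlled by Corollary \ref{Theo_deviation}.

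The next step is to bound the deterministic parameters associated with $\mathcal{A}$. One has $M_F(\mathcal{A})\le 1$ since $\|V_x\|_F=\|x\|_2$. Since $H_x$ is circulant with spectrum $\hat x$, $\|V_x\|_{l_2\to l_2}\le m^{-1/2}\|\hat x\|_\infty\le m^{-1/2}\|x\|_1\le\sqrt{s/m}$, and the embedding $l_{\alpha^*}\hookrightarrow l_2$ (valid since $\alpha^*\ge 2$) gives the same bound for $M_{l_2\to l_{\alpha^*}}(\mathcal{A})$. The principal work is to control the two Talagrand functionals entering $\Gamma(\alpha,\mathcal{A})$. For $\gamma_2(\mathcal{A},\|\cdot\|_{l_2\to l_2})$ I reuse the Dudley-type estimate of Krahmer--Mendelson--Rauhut based on sparsity-adapted $l_2$-coverings of $D_{s,n}$, yielding $\gamma_2(\mathcal{A},\|\cdot\|_{l_2\to l_2})\lesssim\sqrt{s/m}\,\log s\,\log n$, which accounts for the $s\log^{2}s\log^{2}n$ branch of $f_1$. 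For the new term $\gamma_\alpha(\mathcal{A},\|\cdot\|_{l_2\to l_{\alpha^*}})$ I plan to apply the Dudley-type bound
$$\gamma_\alpha(T,d)\lesssim\int_0^{\mathrm{diam}(T)}\bigl(\log N(T,d,\varepsilon)\bigr)^{1/\alpha}\,d\varepsilon,$$
combined with entropy estimates for $s$-sparse unit vectors --- using $\log\binom{n}{s}\le s\log(en/s)$ and volumetric bounds inside a fixed support --- which will produce the $s^{2/\alpha}\log^{4/\alpha}n$ branch of $f_1$.

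Plugging these parameter bounds into \eqref{Eq_remark1.6} and calibrating, the requirement $C(\alpha)L^{2}U_1(\alpha)\le\delta/2$ forces $m\gtrsim_{\alpha,L}\delta^{-2}f_1(s,n)$; after this, taking $t$ of order $\delta$ makes each of the three branches $(t/U_2(\alpha))^{2}$, $(t/U_3(\alpha))^{\alpha}$, $(t/M_{l_2\to l_2}^{2}(\mathcal{A}))^{\alpha/2}$ at least a constant multiple of $f_2(s,n)$. In particular, with the estimates above the minimum is attained by $(t/U_3(\alpha))^{\alpha}$, and its two components reproduce the two branches $s^{(2-\alpha)/2}\log^{2}n$ and $\log^{\alpha}s\log^{\alpha}n$ of $f_2$, which is what the theorem claims.

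\textbf{Main obstacle.} The technical crux is the estimate of $\gamma_\alpha(\mathcal{A},\|\cdot\|_{l_2\to l_{\alpha^*}})$ when $\alpha<2$. The target operator norm is non-Hilbertian, so the classical generic chaining shortcuts available in the subgaussian case do not apply directly, and one must combine $\gamma_\alpha$-Dudley bounds with sparsity-adapted coverings in a norm tailored to the circulant structure of $V_x$, tracking all logarithmic powers precisely enough to recover the exponents $2/\alpha$ and $4/\alpha$ that appear in $f_1$. Everything else --- the R.I.P. reformulation, the Frobenius and spectral estimates, and the $\gamma_2$-functional --- is a direct transposition of the subgaussian blueprint of \cite{Rauhut_CPAM}.
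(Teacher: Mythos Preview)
Your overall strategy is correct and matches the paper's approach: reduce $\delta_s$ to a supremum of a chaos process via $\Phi x=V_x\eta$, apply Corollary~\ref{Theo_deviation}, and bound the geometric parameters $M_F$, $M_{l_2\to l_{\alpha^*}}$, $\gamma_2$, $\gamma_\alpha$ of $\mathcal{A}=\{V_x:x\in D_{s,n}\}$.

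However, you misjudge the ``main obstacle.'' You already invoked the embedding $l_{\alpha^*}\hookrightarrow l_2$ to bound $M_{l_2\to l_{\alpha^*}}$; the same inequality $\|A\|_{l_2\to l_{\alpha^*}}\le\|A\|_{l_2\to l_2}$ immediately gives
\[
\gamma_{\alpha}(\mathcal{A},\|\cdot\|_{l_2\to l_{\alpha^*}})\le\gamma_{\alpha}(\mathcal{A},\|\cdot\|_{l_2\to l_2}),
\]
so there is no need to build coverings in a non-Hilbertian operator norm ``tailored to the circulant structure.'' The paper simply reuses the \emph{same} two entropy bounds of Krahmer--Mendelson--Rauhut for $N(\mathcal{A},\|\cdot\|_{l_2\to l_2},u)$ (the Maurey-type bound $\lesssim (s/m)(\log n/u)^2$ for $u\ge m^{-1/2}$ and the volumetric bound $\lesssim s\log(en/(su))$ for $u\le m^{-1/2}$) and plugs them into the Dudley integral with exponent $1/\alpha$ instead of $1/2$. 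The integration over $[m^{-1/2},\sqrt{s/m}]$ of $u^{-2/\alpha}$ is what produces the $s^{1/\alpha}$ factor (hence $s^{2/\alpha}$ in $f_1$); no new covering estimate is required. Your plan to work directly with $s$-sparse coverings and only volumetric bounds would likely miss the sharper Maurey-type estimate that drives the large-scale part of the Dudley integral and yields the $\log^{4/\alpha}n$ exponent.
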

For the case $\alpha=2$, Theorem \ref{Theo_RIP} yields for $m\ge c_{0}(L)\delta^{-2}s\log^{2}s\log^{2}n$
\begin{align}
	\textsf{P}\{ \delta_{s}\le \delta \}\ge 1-n^{-c_{1}( L)\log n\log ^{2}s},\nonumber
\end{align}
where $c_{0}(L), c_{1}(L)$ are constants depending only on $L$. Thus our result recovers the previous result (see Theorem 1.1 in \cite{Rauhut_CPAM}) of Krahmer, Mendelson, and Rauhut.

\subsubsection{Time-Frequency Structured Random Matrices}
Let $h\in \mathbb{C}^{m}$ be a fixed vector. Define operators $T, M$
\begin{align}
	(Th)_{j}=h_{j\ominus 1},\quad (Mh)_{j}=e^{2\pi \boldsymbol{i}  j/m}h_{j}=\omega^{j}h_{j},\nonumber
\end{align}
where $j\ominus k=(j-k)\,\,\text{mod}\,\, m$, $\boldsymbol{i}$ is the imaginary unit and $\omega=e^{2\pi \boldsymbol{i}  /m}$. The time-frequency shifts are given by
\begin{align}
	\boldsymbol{\pi}(\lambda)=M^{l}T^{k},\quad \lambda=(k, l)\in \{0, \cdots, m-1 \}^{2}.\nonumber
\end{align}
For $h\neq 0\in \mathbb{C}^{m}$, the time-frequency structured random matrix generated by $h$ is the $m\times m^{2}$ matrix $\Psi_{h}$ whose columns are the vectors $\boldsymbol{\pi}(\lambda)(h)$ for $\lambda\in \{0, \cdots, m-1\}^{2}$. Here, the signal length $n$ is coupled to the embedding dimension $m$ via $n=m^{2}$. 

Krahmer, Mendelson, and Rauhut \cite{Rauhut_CPAM} established the restricted isometry property for the time-frequency structured random matrix generated by a normalized standard sub-Gaussian vector. In particular, let $\xi$ be a standard sub-Gaussian vector and consider the time-frequency structured random matrix $\Psi_{h}\in \mathbb{C}^{m\times m^{2}}$ generated by $h=(1/\sqrt{m})\xi$. If
$
m\ge c\delta^{-2} s\log^{2}s\log^{2}m,\nonumber 
$
then the restricted isometry constant of $\Psi_{h}$ satisfies 
\begin{align}
	\textsf{P}\{ \delta_{s}\le \delta  \}\ge  1-m^{-\log m\log^{2}s}.\nonumber
\end{align}

We refer interested readers to \cite{Rauhut_CPAM} and the references therein for more discussions of time-frequency structured random matrices.

Our following result extends the above result, showing the  restricted isometry property for the time-frequency structured random matrix generated by a normalized standard $\alpha$-subexponential vector, $1\le \alpha\le 2$.

\begin{mytheo}\label{Theo_time_frequency}
	Denote $\eta=(\eta_{1},\cdots,\eta_{m})$ a standard $\alpha$-subexponential vector. Let $\Psi_{h}$ be an $m\times m^{2}$ time-frequency structured random matrix generated by  $h=\eta/\sqrt{m}$, $1\le \alpha\le 2$. Set $L=\max_{i\le n}\Vert \eta_{i}\Vert_{\Psi_{\alpha}}$. Then, under the condition $m\ge c_{1}(\alpha, L)\delta^{-2}f_{1}(s, m)$ the restricted isometry constant $\delta_{s}$ satisfies
	\begin{align}
		\textsf{P}\{ \delta_{s}\le \delta \}\ge 1-\exp(-c_{0}(\alpha, L)  f_{2}(s, m)),    \nonumber
	\end{align}
where the functions $f_{1}, f_{2}$ are as defined in Theorem \ref{Theo_RIP}.
\end{mytheo}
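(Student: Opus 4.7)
\textbf{Proof Proposal for Theorem \ref{Theo_time_frequency}.}

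The plan is to mimic the strategy used for partial random circulant matrices in Theorem \ref{Theo_RIP} and apply Corollary \ref{Theo_deviation} to a suitably chosen family of matrices. Write the restricted isometry constant as
\begin{align}
\delta_{s}=\sup_{x\in D_{s,n}}\Big|\|\Psi_{h}x\|_{2}^{2}-\|x\|_{2}^{2}\Big|,\nonumber
\end{align}
where $D_{s,n}=\{x\in\mathbb{C}^{n}:\|x\|_{2}=1,\|x\|_{0}\le s\}$ and $n=m^{2}$. The key algebraic step is to rewrite, for each $x$, the vector $\Psi_{h}x\in\mathbb{C}^{m}$ as $V_{x}\eta/\sqrt{m}$ for some matrix $V_{x}\in\mathbb{C}^{m\times m}$ built from the time-frequency shifts of the coordinates of $x$. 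Since $\mathsf{E}\|\Psi_{h}x\|_{2}^{2}=\|x\|_{2}^{2}$ for $\eta$ standard, this turns the problem into controlling $\sup_{A\in\mathcal{A}_{s}}\bigl|\|A\eta\|_{2}^{2}-\mathsf{E}\|A\eta\|_{2}^{2}\bigr|$ with $\mathcal{A}_{s}=\{V_{x}/\sqrt{m}:x\in D_{s,n}\}$, which is precisely the setting of Corollary \ref{Theo_deviation}.

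Next I would estimate the five geometric parameters $M_{F}(\mathcal{A}_{s})$, $M_{l_{2}\to l_{2}}(\mathcal{A}_{s})$, $M_{l_{2}\to l_{\alpha^{*}}}(\mathcal{A}_{s})$ and the $\gamma_{2},\gamma_{\alpha}$ functionals entering $U_{1}(\alpha),U_{2}(\alpha),U_{3}(\alpha)$. For the subgaussian case $\alpha=2$ these are exactly the quantities computed in \cite{Rauhut_CPAM} via coherence estimates for the Gabor system and Rudelson-Vershynin type $\gamma_{2}$ bounds using the Dudley integral together with properties of the time-frequency shifts; they yield $M_{F}\lesssim 1$, $M_{l_{2}\to l_{2}}\lesssim\sqrt{s/m}$, and $\gamma_{2}(\mathcal{A}_{s},\|\cdot\|_{l_{2}\to l_{2}})\lesssim\sqrt{s/m}\,\log s\log m$. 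For general $1\le\alpha\le 2$, I would complement these with a bound on $M_{l_{2}\to l_{\alpha^{*}}}(\mathcal{A}_{s})$ and $\gamma_{\alpha}(\mathcal{A}_{s},\|\cdot\|_{l_{2}\to l_{\alpha^{*}}})$. Since $\|A\|_{l_{2}\to l_{\alpha^{*}}}\le m^{1/\alpha^{*}-1/2}\|A\|_{l_{2}\to l_{2}}$ and any admissible sequence for $(\mathcal{A}_{s},\|\cdot\|_{l_{2}\to l_{2}})$ is admissible for $(\mathcal{A}_{s},\|\cdot\|_{l_{2}\to l_{\alpha^{*}}})$, I can transfer the existing $\gamma_{2}$-estimate to a $\gamma_{\alpha}$-estimate at the price of the factor $m^{1/\alpha^{*}-1/2}$, and then collect everything into the functions $f_{1}(s,m)$ and $f_{2}(s,m)$ appearing in the statement.

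Plugging these bounds into \eqref{Eq_remark1.6} with $t=\delta\cdot\mathrm{const}$, choosing the three competing terms in the exponent so that the smallest one governs the probability, and imposing the assumption $m\ge c_{1}(\alpha,L)\delta^{-2}f_{1}(s,m)$ to absorb the deterministic $U_{1}(\alpha)$ term into $\delta/2$, one obtains
\begin{align}
\mathsf{P}\{\delta_{s}>\delta\}\le C_{1}(\alpha)\exp(-c_{0}(\alpha,L)f_{2}(s,m)),\nonumber
\end{align}
which is the claimed bound. The expected main obstacle is the matrix-parameter estimation step: the $\gamma_{\alpha}$-functional of $\mathcal{A}_{s}$ in the $l_{2}\to l_{\alpha^{*}}$ norm is not immediately available from \cite{Rauhut_CPAM}, so I would have to carefully adapt their chaining construction (based on the Dudley-type entropy estimates for Gabor systems) to the $l_{\alpha^{*}}$ norm, tracking the dependence on $m$ and $s$ so that the final exponents match $f_{1}$ and $f_{2}$. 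The remaining steps are routine once these geometric quantities are pinned down.
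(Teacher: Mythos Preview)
Your overall strategy matches the paper's proof: rewrite $\Psi_{h}x=V_{x}\eta$ with $V_{x}=m^{-1/2}\sum_{\lambda}x_{\lambda}\boldsymbol{\pi}(\lambda)$, set $\mathcal{A}=\{V_{x}:x\in D_{s,n}\}$, apply Corollary~\ref{Theo_deviation}, and feed in the geometric parameters (using the covering-number estimates from \cite{Rauhut_CPAM}). So the plan is right.

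There is, however, a concrete slip in the step you flag as the ``main obstacle.'' The inequality you write,
\[
\|A\|_{l_{2}\to l_{\alpha^{*}}}\le m^{1/\alpha^{*}-1/2}\|A\|_{l_{2}\to l_{2}},
\]
is false for $\alpha<2$: since $1/\alpha^{*}-1/2<0$, the right-hand side is \emph{smaller} than $\|A\|_{l_{2}\to l_{2}}$, and in general the $l_{2}\to l_{\alpha^{*}}$ norm is not that small. What is true (and what the paper uses) is the trivial monotonicity $\|v\|_{\alpha^{*}}\le\|v\|_{2}$ for $\alpha^{*}\ge 2$, hence simply
\[
\|A\|_{l_{2}\to l_{\alpha^{*}}}\le \|A\|_{l_{2}\to l_{2}},\qquad
\gamma_{\alpha}(\mathcal{A},\|\cdot\|_{l_{2}\to l_{\alpha^{*}}})\le \gamma_{\alpha}(\mathcal{A},\|\cdot\|_{l_{2}\to l_{2}}).
\]
With this, there is no need to ``adapt the chaining construction to the $l_{\alpha^{*}}$ norm'': you bound $\gamma_{\alpha}(\mathcal{A},\|\cdot\|_{l_{2}\to l_{2}})$ directly via the Dudley integral \eqref{Eq_gamma_covering} using the \emph{same} $\|\cdot\|_{l_{2}\to l_{2}}$ covering numbers already available from \cite{Rauhut_CPAM} (equations \eqref{Eq_Time_RIP_Integral1}--\eqref{Eq_Time_RIP_Integral2} in the paper). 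Splitting the integral at $u=1/\sqrt{m}$ gives $\gamma_{\alpha}(\mathcal{A},\|\cdot\|_{l_{2}\to l_{\alpha^{*}}})\lesssim_{\alpha} s^{1/\alpha}m^{-1/2}\log^{2/\alpha}n$ for $1\le\alpha<2$, together with the known $\gamma_{2}$ bound, and these are exactly what produce $f_{1}$ and $f_{2}$. So the obstacle you anticipate dissolves once you use the correct (and simpler) norm comparison; after that fix, your argument coincides with the paper's.
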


\subsection{Organization of the paper.} Section 2 will introduce some notations and auxiliary lemmas, mainly including the moment bounds for decoupled chaos variables, the contraction principle, and the majorizing measure theorems. In Section 3, we shall prove our main results. We first prove Theorem \ref{Theo_logconcave_deviation}, the $p$-th moment bounds for the suprema of chaos processes, in Section 3.1. Then, we prove  Corollaries \ref{Cor_logconcave} and \ref{Theo_deviation} in Section 3.2.  At last, we conclude the proof of Theorems \ref{Theo_RIP} and \ref{Theo_time_frequency} in Sections 3.3 and 3.4. In Section 4, we show an improved result of Corollary \ref{Theo_deviation} and then provide a possible way to improve Theorems \ref{Theo_RIP} and \ref{Theo_time_frequency}.
 In the Appendix, we first provide a proof of the R.I.P. of $\alpha$-subexponential random matrices. Then,  some integration operations have been added when estimating the bounds of $\gamma_{\alpha}$-functionals.

\section{Preliminaries}

\subsection{Notations}

For a fixed vector $x=(x_{1},\cdots, x_{n})^\top\in \mathbb{R}^{n}$, we denote by $\Vert x\Vert_{p}=(\sum\vert x_{i}\vert^{p})^{1/p}$ the $l_{p}$ norm. We use $\Vert \xi\Vert_{L_{p}}=(\textsf{E}\vert \xi\vert^{p})^{1/p}$ for the $L_{p}$ norm of a random variable $\xi$.
As for an $m\times n$ matrix $A=(a_{ij})$, recall the following notations of norms
\begin{align}\label{Eq_norms_matrix}
	\Vert A\Vert_{F}=&\sqrt{\sum_{i,j}\vert a_{ij}\vert^{2}}, \quad \Vert A\Vert_{\infty}=\max_{ij}\vert a_{ij}\vert, \quad \Vert A\Vert_{l_{p}(l_{2})}=(\sum_{i\le m}(\sum_{j\le n}\vert a_{ij}\vert^{2})^{p/2})^{1/p}\nonumber\\
	&\Vert A\Vert_{l_{p_{1}}\to l_{p_{2}}}=\sup \{\vert\sum a_{ij}x_{j}y_{i}\vert: \Vert x\Vert_{p_{1}}\le 1, \Vert y\Vert_{p_{2}^{*}} \le 1  \},
\end{align}
where $p_{2}^{*}=p_{2}/(p_{2}-1)$. We remark that $\Vert A\Vert_{l_{2}\to l_{2}}$ is the important spectral norm of $A$ and $\Vert A\Vert_{l_{1}\to l_{\infty}}=\Vert A\Vert_{\infty}$.

Unless otherwise stated, we denote by $C, C_{1}, c, c_{1},\cdots$ universal constants which are independent of any parameters, and by $C(\delta), c(\delta),\cdots $ constants that depend only on the parameter $\delta$. 
For convenience, we say $f\lesssim g$ if $f\le Cg$ for some universal constant $C$ and say $f\lesssim_{\delta} g$ if $f\le C(\delta)g$ for some constant $C(\delta)$. We also say $f\asymp$ if $f\lesssim g$ and $g\lesssim f$, so does $f\asymp_{\delta} g$. We denote $\xi\sim \mathcal{W}_{s}(\alpha)$ if $\xi$ is  a symmetric Weibull variable with the scale parameter $1$ and the shape parameter $1 \le \alpha \le 2$. In particular, $-\log \textsf{P}\{\vert\xi\vert>x  \}=x^{\alpha}, x\ge 0$.

\subsection{Tails and Moments}

This subsection will introduce some basic properties of the tails and moments for some random variables.

\begin{mylem}\label{Lem_Moments}
	Assume that a random variable $\xi$ satisfies for $p\ge p_{0}$
	\begin{align}
		\Vert \xi\Vert_{L_p}\le \sum_{k=1}^{m}C_{k}p^{\beta_{k}}+C_{m+1},\nonumber
	\end{align}
	where $C_{1},\cdots, C_{m+1}> 0$ and $\beta_{1},\cdots, \beta_{m}>0$. Then we have for any $t>0$,
	\begin{align}\label{Eq_Lemma_2.2_1}
		\textsf{P}\big\{ \vert \xi\vert>e(mt+C_{m+1}) \big\}\le e^{p_{0}}\exp\Big(-\min\big\{\big(\frac{t}{C_{1}}\big)^{1/\beta_{1}},\cdots, \big(\frac{t}{C_{m}}\big)^{1/\beta_{m}}\big\}\Big)
	\end{align}
	and
	\begin{align}\label{Eq_Lemma_2.2_2}
		\textsf{P}\Big\{\vert \xi\vert>e \big(\sum_{k=1}^{m}C_{k}t^{\beta_{k}}+C_{m+1}\big)  \Big\}\le e^{p_{0}}e^{-t}.
	\end{align}
\end{mylem}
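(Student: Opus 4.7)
The plan is to prove both inequalities by a single device: Markov's inequality applied to $|\xi|^p$ at a carefully chosen value of $p$, taking advantage of the hypothesis $\|\xi\|_{L_p}\le \sum_{k=1}^m C_k p^{\beta_k}+C_{m+1}$ valid for all $p\ge p_0$. Recall that Markov gives $\textsf{P}\{|\xi|>\lambda\}\le (\|\xi\|_{L_p}/\lambda)^p$ for every $p$ in the allowed range, so the game is to pick $p$ to make the ratio roughly $1/e$.

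For \eqref{Eq_Lemma_2.2_2}, I would set $p=\max(t,p_0)$. In the case $t\ge p_0$, the hypothesis directly yields $\|\xi\|_{L_t}\le \sum_{k=1}^m C_k t^{\beta_k}+C_{m+1}$, and choosing $\lambda=e(\sum_{k=1}^m C_k t^{\beta_k}+C_{m+1})$ in Markov produces
\[
\textsf{P}\bigl\{|\xi|>\lambda\bigr\}\le \Bigl(\frac{\|\xi\|_{L_t}}{\lambda}\Bigr)^{t}\le e^{-t},
\]
which is even stronger than the claimed bound. In the complementary case $t<p_0$ the right-hand side $e^{p_0}e^{-t}$ already exceeds $1$, so the inequality is trivial. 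Combining gives the desired estimate with the $e^{p_0}$ prefactor absorbing the threshold.

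For \eqref{Eq_Lemma_2.2_1}, the choice of $p$ should equalize all $m$ polynomial terms to $t$. Concretely, set
\[
p^*=\min_{1\le k\le m}\Bigl(\frac{t}{C_k}\Bigr)^{1/\beta_k},
\]
so that $C_k (p^*)^{\beta_k}\le t$ for every $k$ and hence $\sum_{k=1}^m C_k (p^*)^{\beta_k}\le mt$. When $p^*\ge p_0$, the hypothesis produces $\|\xi\|_{L_{p^*}}\le mt+C_{m+1}$, and Markov with $\lambda=e(mt+C_{m+1})$ yields $\textsf{P}\{|\xi|>\lambda\}\le e^{-p^*}$, which is exactly the stated exponential in the minimum. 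When $p^*<p_0$ the bound $e^{p_0}\exp(-p^*)\ge 1$ is again trivial, and the claim follows.

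No step is genuinely delicate: the argument is essentially the standard moment-to-tail conversion, and the only mild subtlety is verifying that $p^*$ (respectively $t$) exceeds $p_0$, which is exactly what the $e^{p_0}$ factor on the right-hand side absorbs when it does not. Thus the proof amounts to writing out the two Markov applications and checking the trivial regimes.
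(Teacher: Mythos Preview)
Your proposal is correct and follows essentially the same route as the paper's own proof: the paper also defines $f(t)=\min_k (t/C_k)^{1/\beta_k}$ (your $p^*$), applies Markov at $p=f(t)$ to get \eqref{Eq_Lemma_2.2_1}, and applies Markov at $p=t$ to get \eqref{Eq_Lemma_2.2_2}, with the $e^{p_0}$ factor absorbing the trivial regime $f(t)<p_0$ (resp.\ $t<p_0$) in both cases. The only cosmetic difference is the order in which the two inequalities are treated.
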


\begin{proof}
	We first prove \eqref{Eq_Lemma_2.2_1}.	Consider the following function,
	\begin{align}
		f(t):=\min\big\{\big(\frac{t}{C_{1}}\big)^{1/\beta_{1}},\cdots, \big(\frac{t}{C_{m}}\big)^{1/\beta_{m}}\big\}.\nonumber
	\end{align}
	If we assume $f(t)\ge p_{0}$, we can estimate 
	\begin{align}
		\Vert \xi\Vert_{L_{f(t)}}\le \sum_{k=1}^{m}C_{k}f(t)^{\beta_{k}}+C_{m+1}\le mt+C_{m+1}.\nonumber
	\end{align}
	Hence, we have by Markov's inequality
	\begin{align}
		\textsf{P}\{\vert \xi\vert>e(mt+C_{m+1})  \}\le \textsf{P}\{\vert \xi\vert >e \Vert \xi\Vert_{f(t)}  \}\le e^{-f(t)}.\nonumber
	\end{align}
	As for $f(t)<p_{0}$, we have the following trival bound
	\begin{align}
		\textsf{P}\{\vert \xi\vert>e(mt+C_{m+1})  \}\le 1\le e^{p_{0}}e^{-f(t)}.\nonumber
	\end{align}
	Hence, we have for $t\ge 0$
	\begin{align}
		\textsf{P}\{\vert \xi\vert>e(mt+C_{m+1})  \}\le  e^{p_{0}}e^{-f(t)}.\nonumber
	\end{align}
	
	Then, we turn to the proof of \eqref{Eq_Lemma_2.2_2}. By Markov's inequality, we have for any $p\ge p_{0}$
	\begin{align}
		\textsf{P}\Big\{\vert \xi\vert>e \big(\sum_{k=1}^{m}C_{k}t^{\beta_{k}}+C_{m+1}\big) \Big \}&\le \frac{\textsf{E}\vert \xi\vert^{p}}{e^{p} (\sum_{k=1}^{m}C_{m}t^{\beta_{k}}+C_{m+1})^{p}}\nonumber\\
		&\le \left(\frac{\sum_{k=1}^{m}C_{k}p^{\beta_{k}}+C_{m+1}}{e (\sum_{k=1}^{m}C_{k}t^{\beta_{k}}+C_{m+1})}\right)^{p}.\nonumber
	\end{align}
	Letting $p=t$, we have for $t\ge p_{0}$
	\begin{align}
		\textsf{P}\Big\{\vert \xi\vert>e \big(\sum_{k=1}^{m}C_{m}t^{\beta_{k}}+C_{m+1}\big) \Big \}\le e^{-t}.\nonumber
	\end{align}
	Moreover, if we further bound $e^{-t}$ by $e^{-(t-p_{0})}$, then the inequality is obviously valid for $t>0$, as desired.
\end{proof}

\begin{mylem}[Lemma 4.6 in \cite{Latala_Inventions}]\label{Lem_low bound}
	Let $\xi$ be a random variable such that
	\begin{align}
		K_{1}p^{\beta}\le \Vert\xi\Vert_{L_{p}}\le K_{2}p^{\beta}, \quad\text{for all}\,\, p\ge 2.\nonumber
	\end{align}
	Then there exist constants $K_{3}, K_{4}$ depending only on $K_{1}, K_{2}, \beta$ such that
	\begin{align}
		K_{3}e^{-t^{1/\beta}/K_{3}}\le \textsf{P}\{\vert \xi\vert\ge t\}\le K_{4}e^{-t^{1/\beta}/K_{4}},\quad\text{for all}\,\, t\ge 0.\nonumber
	\end{align}
\end{mylem}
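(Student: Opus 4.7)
The plan is to prove the upper and lower tail estimates separately. The upper bound is a direct consequence of Markov's inequality applied to $|\xi|^p$ with an optimal choice of $p$. The lower bound is more delicate and will rely on the Paley--Zygmund second moment inequality, which converts a controlled ratio between $\|\xi\|_{L_p}$ and $\|\xi\|_{L_{2p}}$ into a pointwise lower bound on the tail.

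For the upper estimate, I would invoke Markov to get $\textsf{P}\{|\xi|\ge t\}\le \|\xi\|_{L_p}^p/t^p\le (K_2 p^\beta/t)^p$ for every $p\ge 2$, and then optimize by taking $p=(t/(eK_2))^{1/\beta}$ so that the base equals $1/e$. This yields $\textsf{P}\{|\xi|\ge t\}\le \exp(-(t/(eK_2))^{1/\beta})$, valid once $t$ exceeds a threshold depending only on $K_2$ and $\beta$ (to ensure $p\ge 2$); for smaller $t$ the claimed inequality is made trivial by inflating the constant $K_4$.

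For the lower estimate, I would apply the Paley--Zygmund inequality to the nonnegative random variable $|\xi|^p$, namely
\[
\textsf{P}\bigl\{|\xi|^p\ge \tfrac12\textsf{E}|\xi|^p\bigr\}\ \ge\ \tfrac14\,\frac{(\textsf{E}|\xi|^p)^2}{\textsf{E}|\xi|^{2p}}.
\]
The two-sided moment hypothesis gives $(\textsf{E}|\xi|^p)^2/\textsf{E}|\xi|^{2p}\ge (K_1 p^\beta)^{2p}/(K_2(2p)^\beta)^{2p}=(K_1/(2^\beta K_2))^{2p}$, so
\[
\textsf{P}\{|\xi|\ge 2^{-1/p}K_1 p^\beta\}\ \ge\ \tfrac14\exp\bigl(-2p\log(2^\beta K_2/K_1)\bigr).
\]
Using $2^{-1/p}\ge 2^{-1/2}$ for $p\ge 2$ and reparameterizing $t=K_1 p^\beta/\sqrt{2}$ (so that $p=(\sqrt2\, t/K_1)^{1/\beta}$) then produces a bound of the desired form $K_3 \exp(-t^{1/\beta}/K_3)$ on the range $t\ge K_1\cdot 2^{\beta}/\sqrt{2}$. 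For $t$ below this threshold, the claimed inequality is trivial after shrinking $K_3$, since the left-hand side is bounded below by a positive constant depending only on $K_1,K_2,\beta$ (which itself follows from Chebyshev applied to $|\xi|^2$ using the $p=2$ moment hypothesis).

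The only real obstacle I foresee is bookkeeping: one must check that the constants $K_3,K_4$ can be chosen uniformly in $t\ge 0$ rather than only in the asymptotic regime where the Markov and Paley--Zygmund estimates are sharp. Conceptually, the key observation is that the two-sided $L_p$ growth forces $K_1\le 2^\beta K_2$ (by monotonicity in $p$ of the sequence $\|\xi\|_{L_p}$ combined with the hypothesis at $p$ and $2p$), which in turn guarantees that the exponent $2p\log(2^\beta K_2/K_1)$ in Paley--Zygmund is strictly positive and linear in $p$, matching the Markov rate up to constants.
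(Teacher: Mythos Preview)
The paper does not prove this lemma; it merely quotes it as Lemma~4.6 of \cite{Latala_Inventions}. So there is no ``paper's own proof'' to compare against. Your argument is the standard one and is correct: Markov with $p=(t/(eK_2))^{1/\beta}$ for the upper tail, and Paley--Zygmund applied to $|\xi|^p$ for the lower tail, using that the ratio $(\textsf{E}|\xi|^p)^2/\textsf{E}|\xi|^{2p}$ is at least $(K_1/(2^\beta K_2))^{2p}$.

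One small correction in your write-up: for the small-$t$ regime of the lower bound you invoke ``Chebyshev applied to $|\xi|^2$'', but Chebyshev gives upper tail bounds, not lower ones. What you actually need there is either (i) monotonicity of $t\mapsto\textsf{P}\{|\xi|\ge t\}$ combined with the Paley--Zygmund bound you already established at the threshold $t=K_1 2^{\beta}/\sqrt{2}$ (corresponding to $p=2$), or equivalently (ii) Paley--Zygmund applied directly to $|\xi|^2$. Either way the conclusion is the same: $\textsf{P}\{|\xi|\ge t\}\ge \tfrac14(K_1/(2^\beta K_2))^{4}$ for all $t$ below the threshold, which can be absorbed into $K_3$. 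With that wording fixed, the proof goes through without difficulty.
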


\subsection{Decoupled Chaos Variables}

In this subsection, we collect some optimal moment estimates for decoupled chaos variables generated by a sequence of i.i.d. variables $\{\xi_{i}, i\le n\}$ with distribution $\mathcal{W}_{s}(\alpha)$, $1\le \alpha\le 2$. 
 For simplicity, we only present the results on decoupled chaos of order $1$ and $2$ here. 

\begin{mylem}[Theorem 6.1 in \cite{Adamczak_PTRF}]\label{Lem_alpha_2}
	For $p\ge 2$, we have
	
	(i)\begin{align}
		\big\Vert \sum_{i\le n}a_{i}\xi_{i}\big\Vert_{L_{p}}\asymp_{\alpha}p^{1/2}\Vert a\Vert_{2}+p^{1/\alpha}\Vert a\Vert_{\alpha^{*}},\nonumber
	\end{align}
 where $a=(a_{1},\cdots, a_{n})^\top$ is a fixed vector and $\alpha^{*}=\alpha/(\alpha-1).$
 
 (ii)\begin{align}\label{Eq_Lemma_optimalbound_2}
 	\big\Vert \sum_{i, j}a_{ij}\xi_{i}\tilde{\xi}_{j}\big\Vert_{L_{p}}\asymp_{\alpha}&p^{1/2}\Vert A\Vert_{F}+p\Vert A\Vert_{l_{2}\to l_{2}}+p^{1/\alpha}\Vert A\Vert_{l_{\alpha^{*}}(l_{2})},\nonumber\\
 	&+p^{(\alpha+2)/2\alpha}\Vert A\Vert_{l_{2}\to l_{\alpha^{*}}}+p^{2/\alpha}\Vert A\Vert_{l_{\alpha}\to l_{\alpha^{*}}},
 \end{align}
where $A=(a_{ij})$ is a fixed symmetric matrix and $\{\tilde{\xi}_{i}, i\le n\}$ are independent copies of $\{\eta_{i}, i\le n\}$.
\end{mylem}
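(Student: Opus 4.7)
Part (i) is a classical moment bound for sums of i.i.d.\ log-concave-tailed variables in the spirit of Gluskin--Kwapie\'n and Hitczenko--Montgomery-Smith, and Part (ii) follows from (i) by iterated conditioning on the two independent copies, together with moment estimates for the $\ell_2$- and $\ell_{\alpha^{*}}$-norms of $A^\top\xi$.

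\textbf{Proof sketch for (i).} Since $\xi_i\sim\mathcal{W}_s(\alpha)$, an elementary integration gives $\|\xi_i\|_{L_p}\asymp_\alpha p^{1/\alpha}$. For the upper bound on $\|\sum a_i\xi_i\|_{L_p}$ my plan is a truncation argument: pick a threshold $t>0$ and split $a_i\xi_i=a_i\xi_i\mathbf{1}\{|a_i\xi_i|\le t\}+a_i\xi_i\mathbf{1}\{|a_i\xi_i|>t\}$. The truncated, symmetric, bounded part satisfies a Bernstein-type estimate producing the subgaussian contribution $\sqrt{p}\,\|a\|_2$. The heavy tail part is dominated in $L_p$ by $\sum_i|a_i|\,\|\xi_i\mathbf{1}\{|\xi_i|>t/|a_i|\}\|_{L_p}$; computing this using the Weibull density $e^{-x^\alpha}$ and optimizing $t$ via Young's inequality with exponents $\alpha,\alpha^{*}$ yields the term $p^{1/\alpha}\|a\|_{\alpha^{*}}$. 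The matching lower bound is obtained by testing two extremal configurations: concentrating $a$ on a single coordinate realizes $p^{1/\alpha}\|a\|_{\alpha^{*}}$, while a Paley--Zygmund argument applied to the second moment of the full sum realizes $\sqrt{p}\,\|a\|_2$.

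\textbf{Proof sketch for (ii).} The plan is to condition on $\xi$ and apply (i) twice. Writing $b_j(\xi)=(A^\top\xi)_j$, part (i) applied conditionally on $\xi$ gives
\[
\Big\|\sum_{i,j}a_{ij}\xi_i\tilde\xi_j\Big\|_{L_p(\tilde\xi)}\lesssim_\alpha \sqrt{p}\,\|A^\top\xi\|_2+p^{1/\alpha}\|A^\top\xi\|_{\alpha^{*}}.
\]
Taking the $L_p(\xi)$-norm of both sides and applying the triangle inequality reduces the task to estimating $\big\|\|A^\top\xi\|_2\big\|_{L_p}$ and $\big\|\|A^\top\xi\|_{\alpha^{*}}\big\|_{L_p}$. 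For the first I would bound the mean by $\|A\|_F$ (via Jensen on the second moment) and control fluctuations using the fact that $\|A^\top\xi\|_2^2$ is itself a symmetric Weibull quadratic form dominated by a Hanson--Wright-type tail at scale $p\|A\|_{l_2\to l_2}$, yielding $\|A\|_F+\sqrt{p}\,\|A\|_{l_2\to l_2}$; multiplication by $\sqrt{p}$ produces the first two terms of the stated bound. For the second I would expand $\|A^\top\xi\|_{\alpha^{*}}^{\alpha^{*}}=\sum_j|b_j(\xi)|^{\alpha^{*}}$ and apply (i) to each $b_j$, then trade off sums and maxima: the ``flat'' regime yields $\|A\|_{l_{\alpha^{*}}(l_2)}$, the ``subgaussian--Weibull'' mixed regime yields $\|A\|_{l_2\to l_{\alpha^{*}}}$, and the ``pure Weibull'' regime yields $\|A\|_{l_\alpha\to l_{\alpha^{*}}}$. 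Multiplying by the prefactor $p^{1/\alpha}$ produces exactly the last three terms.

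\textbf{Main obstacle.} The delicate step is extracting the operator norms $\|A\|_{l_2\to l_{\alpha^{*}}}$ and $\|A\|_{l_\alpha\to l_{\alpha^{*}}}$ from $\big\|\|A^\top\xi\|_{\alpha^{*}}\big\|_{L_p}$ without losing polylogarithmic factors: a naive net over the dual $\ell_\alpha$-ball would produce spurious $\log n$ powers, so one needs either a chaining bound in the appropriate $\gamma_\alpha$-functional or a direct coordinatewise Rosenthal-type inequality adapted to Weibull tails. The matching lower bounds in (ii) are also nontrivial, requiring five separate witness matrices (rank-one, sparse block-diagonal, and flat configurations) each realizing one of the five scales through an explicit moment computation.
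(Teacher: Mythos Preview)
The paper does not prove this lemma; it is quoted verbatim as Theorem~6.1 of Adamczak--Wolff \cite{Adamczak_PTRF}, so there is no in-paper proof to compare against. Your iterated-conditioning strategy is indeed the standard route to such decoupled-chaos moment bounds and is essentially how the cited result is obtained.

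That said, there is a concrete error in your treatment of $\big\|\,\|A^\top\xi\|_2\,\big\|_{L_p}$. You claim this is $\lesssim \|A\|_F+\sqrt{p}\,\|A\|_{l_2\to l_2}$ via a ``Hanson--Wright-type tail'' on $\|A^\top\xi\|_2^2$. For $\alpha<2$ this is false: the correct bound carries an additional term $p^{1/\alpha}\|A\|_{l_2\to l_{\alpha^*}}$, because the quadratic form $\|A^\top\xi\|_2^2$ in Weibull variables has heavier-than-subexponential fluctuations. Moreover, the Hanson--Wright inequality for $\alpha$-subexponential entries (cf.\ \eqref{Eq_remark_ii}) is itself a \emph{consequence} of the present lemma, so invoking it here is circular. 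The clean, non-circular tool is the comparison of weak and strong moments for log-concave-tailed sums (Lata{\l}a; see Lemma~\ref{Lem_Latala_EJP}): write $\|A^\top\xi\|_q=\sup_{\|y\|_{q^*}\le 1}\langle Ay,\xi\rangle$ and apply part~(i) to the weak moment, obtaining
\[
\big\|\,\|A^\top\xi\|_2\,\big\|_{L_p}\lesssim_\alpha \|A\|_F+\sqrt{p}\,\|A\|_{l_2\to l_2}+p^{1/\alpha}\|A\|_{l_2\to l_{\alpha^*}},
\]
and similarly for the $\ell_{\alpha^*}$-norm. This also dispenses with the net/chaining worry you raise in your ``main obstacle'': no $\log n$ factors appear. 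Your final upper bound happens to survive your error, since the missed term resurfaces with the same exponent $p^{(\alpha+2)/2\alpha}$ from the $\ell_{\alpha^*}$ analysis, but the intermediate step as written is incorrect and should be replaced by the weak--strong comparison.
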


\begin{myrem}\label{Rem_explanations}
Although the bound \eqref{Eq_Lemma_optimalbound_2} is optimal, but it is quite complex. Note that
\begin{align}
p^{1/\alpha}\Vert A\Vert_{l_{\alpha^{*}}(l_{2})}\le p^{1/2}\Vert A\Vert_{F},\quad	\Vert A\Vert_{l_{\alpha}\to l_{\alpha^{*}}}\le 	\Vert A\Vert_{l_{2}\to l_{\alpha^{*}}}\le 	\Vert A\Vert_{l_{2}\to l_{2}}.\nonumber
\end{align}
Here, one can refer to \cite{Sambale_notes} for a similar proof of the first inequality.
Hence, we have  for $p\ge 2$
\begin{align}
	\big\Vert \sum_{i, j}a_{ij}\xi_{i}\tilde{\xi}_{j}\big\Vert_{L_{p}}\lesssim_{\alpha}p^{1/2}\Vert A\Vert_{F}+p^{2/\alpha}\Vert A\Vert_{l_{2}\to l_{2}}.\nonumber
\end{align}

\end{myrem}

\subsection{Contraction Principle}  In this subsection, we shall present a contraction principle, a slight variant of Lemma 4.7 in Latała et al \cite{Latala_Inventions}. For convenience of reading, we provide a simple proof here. The reader is also referred to Ledoux and Talagrand \cite{Ledoux_Talagrand_book} for a well-known contraction principle in Banach space.

\begin{mylem}\label{Lem_Comparison}
	Let $\xi_{i}$ and $\eta_{i}$, $i=1, \cdots, n$ be independent symmetric centered random variables such that for some constant $c\ge 1$ and $t>0$
	\begin{align}
	\textsf{P}\big\{ \vert\xi_{i}\vert\ge t   \big\}\le c\textsf{P}\big\{c\vert\eta_{i}\vert\ge t  \big\}.\nonumber
	\end{align}
	Then,
	\begin{align}
		\textsf{E}f(\xi_{1},\cdots,\xi_{n})\le \textsf{E}f(c^{2}\eta_{1},\cdots,c^{2}\eta_{n} )\nonumber
	\end{align}
	for every convex function $f: \mathbb{R}^{n}\to \mathbb{R}$.
\end{mylem}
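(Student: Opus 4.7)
The plan is to reduce the $n$-dimensional inequality to a one-dimensional comparison via coordinate-by-coordinate replacement, then handle the one-dimensional case by a layer-cake computation that exploits the monotonicity of the derivative of a convex function.

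For the reduction, I would swap one coordinate at a time. Fix $j \in \{1,\ldots,n\}$ and condition on $(\xi_i)_{i\neq j}$, which makes sense by independence; the restriction $x \mapsto f(\xi_1,\ldots,\xi_{j-1},x,\xi_{j+1},\ldots,\xi_n)$ is then a convex function of a single real variable. If I can prove the one-dimensional statement $\textsf{E}\, g(\xi_j) \le \textsf{E}\, g(c^2 \eta_j)$ for every convex $g:\mathbb{R}\to\mathbb{R}$ and every pair $(\xi_j,\eta_j)$ satisfying the tail hypothesis, then taking the conditional expectation over $\xi_j$ (using independence of $\eta_j$ from the other $\xi_i$'s) replaces $\xi_j$ by $c^2 \eta_j$. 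Iterating over $j=1,\ldots,n$ yields the full claim.

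For the one-dimensional case, let $g$ be convex. Since $\xi$ and $\eta$ are symmetric, the symmetrization $g_e(x) := \tfrac{1}{2}(g(x)+g(-x))$ is even and convex, hence nondecreasing on $[0,\infty)$, and $\textsf{E}\, g(\xi) = \textsf{E}\, g_e(|\xi|)$, likewise for $\eta$. Write the layer-cake identity
\[
\textsf{E}\, g_e(|\xi|) \;=\; g_e(0) + \int_0^\infty g_e'(t)\, \textsf{P}\{|\xi| \ge t\}\, dt,
\]
where $g_e'$ denotes the right derivative (which exists everywhere and is nondecreasing because $g_e$ is convex). Applying the hypothesis $\textsf{P}\{|\xi|\ge t\}\le c\, \textsf{P}\{c|\eta|\ge t\}$ and the substitution $t = cs$ bounds the right-hand side by $g_e(0) + c^2 \int_0^\infty g_e'(cs)\, \textsf{P}\{|\eta|\ge s\}\, ds$. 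The analogous layer-cake expansion gives $\textsf{E}\, g_e(c^2|\eta|) = g_e(0) + c^2 \int_0^\infty g_e'(c^2 s)\, \textsf{P}\{|\eta|\ge s\}\, ds$. Since $c \ge 1$ forces $cs \le c^2 s$ for all $s \ge 0$, monotonicity of $g_e'$ yields $g_e'(cs) \le g_e'(c^2 s)$ pointwise, which closes the comparison.

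The only subtlety I anticipate is technical care in the layer-cake step when $g_e$ is not everywhere differentiable; I would handle this by working throughout with the right derivative (well-defined and nondecreasing for any convex function) and justifying the representation by Lebesgue--Stieltjes or by approximating $g_e$ with smooth convex functions. Once that measure-theoretic housekeeping is in place, the core of the argument is the single-line monotonicity inequality $g_e'(cs)\le g_e'(c^2 s)$, and the $n$-dimensional statement follows by the coordinate-by-coordinate swap described above.
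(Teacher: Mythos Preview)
Your argument is correct and takes a genuinely different route from the paper. The paper introduces auxiliary Bernoulli variables $\delta_i\sim\text{Bern}(1/c)$, observes that $\textsf{P}\{\delta_i|\xi_i|\ge t\}\le \textsf{P}\{c|\eta_i|\ge t\}$, couples so that $\delta_i|\xi_i|\le c|\eta_i|$ almost surely, and then uses a Rademacher-randomization step (the map $x\mapsto \textsf{E}f(x_1\varepsilon_1,\ldots,x_n\varepsilon_n)$ is convex, so its supremum over $\prod_i[-c|\eta_i|,c|\eta_i|]$ is attained at a vertex) together with Jensen's inequality applied to $\textsf{E}_\delta$ to conclude. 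Your approach instead reduces to dimension one by a Lindeberg-type swap and handles that case analytically via the layer-cake representation and monotonicity of $g_e'$. The coupling/Rademacher proof is more structural and treats all coordinates simultaneously, which can be convenient when $f$ is, say, a norm in an abstract Banach space; your approach is more elementary, avoids the coupling construction entirely, and makes the role of the constant $c^2$ (one factor from the probability comparison, one from the scale) very transparent. Both are short and both are standard in spirit; yours is arguably the more direct argument for this real-valued formulation.
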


\begin{proof}
	Let $\delta_{i}\sim \text{Bern}(1/c)$ be i.i.d. Bernoulli variables independent of $\xi_{i}$. Then we have for $t\ge 0$
	\begin{align}
		\textsf{P}\{ \delta_{i}\vert \xi_{i}\vert\ge t \}\le \textsf{P}\{ c\vert \eta_{i}\vert\ge t \}.\nonumber
	\end{align}
	  We can couple $(\delta_{i}, \xi_{i})$ and $(\eta_{i})$ on the same probability space such that $\delta_{i}\vert \xi_{i}\vert\le c\vert \eta_{i}\vert$ a.s. for every $i$,  due to a standard coupling argument (see Section 1 of Chapter IV in \cite{lectures}).

	Let $\varepsilon_{i}$ be i.i.d. Rademacher variables. Note that the function $$x\mapsto \textsf{E}f(x_{1}\varepsilon_{1},\cdots,  x_{n}\varepsilon_{n})$$ is convex, so its supremum over $\prod_{i\le n}[-c\vert \eta_{i}\vert, c\vert \eta_{i}\vert ]$ is attained at one of the extreme points. We also note that
	\begin{align}
		\textsf{E}f(x_{1}\varepsilon_{1}, x_{2}\varepsilon_{2}, \cdots,  x_{n}\varepsilon_{n})=\textsf{E}f(-x_{1}\varepsilon_{1}, x_{2}\varepsilon_{2}, \cdots,  x_{n}\varepsilon_{n})\nonumber
	\end{align}
	due to that $\varepsilon_{i}$ are symmetric. Hence, we have
	\begin{align}
		\textsf{E}\Big(f\big(\varepsilon_{1}\delta_{1}\vert \xi_{1}\vert, \cdots, \varepsilon_{n}\delta_{n}\vert \xi_{n}\vert\big)\big| \delta, \xi, \eta\Big)\le \textsf{E}\Big(f\big(c\varepsilon_{1}\vert \eta_{1}\vert, \cdots, c\varepsilon_{n}\vert \eta_{n}\vert\big)\big| \delta, \xi, \eta\Big)\nonumber,
	\end{align}
	which implies that
	\begin{align}
		\textsf{E}f(\delta_{1}\xi_{1},\cdots, \delta_{n}\xi_{n})\le \textsf{E}f(c\eta_{1},\cdots, c\eta_{n}).\nonumber
	\end{align}
	Using Jensen's inequality again, we have
	\begin{align}
		\textsf{E}f(\delta_{1}\xi_{1},\cdots, \delta_{n}\xi_{n})=	\textsf{E}\textsf{E}_{\delta}f(\delta_{1}\xi_{1},\cdots, \delta_{n}\xi_{n})\ge \textsf{E}f(\xi_{1}/c,\cdots, \xi_{n}/c)\nonumber.
	\end{align}
	This concludes the proof.
\end{proof}

\subsection{The Generic Chaining}
In this subsection, we shall introduce some definitions and properties of Talagrand's celebrated generic chaining argument \cite{Talagrand_chaining_book}.

For a metric space $(T, d)$, we call a sequence of subsets $\{T_{r}: r\ge 0 \}$ of $T$ an admissible sequence if for every $r\ge 1, \vert T_{r}\vert\le 2^{2^{r}}$ and $\vert T_{0}\vert=1$. For any $0<\alpha<\infty$, the Talagrand's $\gamma_{\alpha}$-functional of $(T, d)$ is defined as follows
\begin{align}\label{Talagrand's_functional}
	\gamma_{\alpha}(T, d)=\inf \sup_{t\in T}\sum_{r\ge 0}2^{r/\alpha}d(t, T_{r}),
\end{align}
where the infimum is taken concerning all admissible sequences of $T$.  In the next subsection, we shall give an alternative definition; see (\ref{Equation_gamma_functionals}).

For a fixed radius $u>0$, we denote the covering number of $T$ by $N(T, d, u)$. One can bound the $\gamma_{\alpha}$-functionals with such covering numbers. In particular, we have
\begin{align}\label{Eq_gamma_covering}
	\gamma_{\alpha}(T, d)\lesssim_{\alpha}\int_{0}^{\infty}(\log N(T, d, u))^{1/\alpha}\, d u.
\end{align}
The bound of the above inequality is the well-known Dudley integral. One can find proof in \cite{Talagrand_chaining_book} for the case $\alpha=2$; the other cases are similar. We do not cover the proof here.

The following lemma bounds the covering number of the Euclidean ball, $B_{2}^{n}=\{x\in \mathbb{R}^{n}: \Vert x\Vert_{2}\le 1  \}$.
\begin{mylem}[Corollary 4.2.13 in \cite{highdimension}]\label{Lem_ball_estimate}
	The covering number of the unit Euclidean ball $B^{n}_{2}$ satisfy the following for any $u>0$:
	\begin{align}
		\big(\frac{1}{u}\big)^{n}\le N\big(B_{2}^{n}, \Vert\cdot\Vert_{2}, u\big)\le \big(\frac{2}{u}+1\big)^{n}.\nonumber
	\end{align}
The same upper bound is true for the unit Euclidean sphere $S^{n-1}$.
\end{mylem}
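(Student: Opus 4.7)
The plan is to establish the two inequalities by a standard volumetric comparison, treating the upper bound via a packing/maximal-net argument and the lower bound via direct volume counting. Throughout I will write $\text{vol}(\cdot)$ for $n$-dimensional Lebesgue measure and exploit the homogeneity $\text{vol}(r B_{2}^{n}) = r^{n}\text{vol}(B_{2}^{n})$.

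For the lower bound, I would suppose that $B_{2}^{n}$ is covered by $N$ closed Euclidean balls of radius $u$. Since the union of these balls contains $B_{2}^{n}$ and each has volume $u^{n}\text{vol}(B_{2}^{n})$, summing gives $N\cdot u^{n}\text{vol}(B_{2}^{n}) \ge \text{vol}(B_{2}^{n})$, so $N \ge (1/u)^{n}$. This is immediate once the equality $\text{vol}(uB_{2}^{n}) = u^{n}\text{vol}(B_{2}^{n})$ is recorded.

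For the upper bound, my strategy is the classical maximal $u$-separated set construction. I would choose a maximal collection $\mathcal{N}\subset B_{2}^{n}$ of points whose pairwise distances are strictly greater than $u$; maximality forces $\mathcal{N}$ to be a $u$-net, so $N(B_{2}^{n},\Vert\cdot\Vert_{2},u) \le |\mathcal{N}|$. The open balls of radius $u/2$ centered at points of $\mathcal{N}$ are pairwise disjoint by construction and are each contained in $(1 + u/2) B_{2}^{n}$. Comparing volumes,
\begin{equation*}
|\mathcal{N}|\cdot (u/2)^{n}\text{vol}(B_{2}^{n}) \;\le\; (1+u/2)^{n}\text{vol}(B_{2}^{n}),
\end{equation*}
which rearranges to $|\mathcal{N}| \le (2/u + 1)^{n}$, giving the stated upper bound.

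For the sphere $S^{n-1}$, the same maximal-separation construction carries through verbatim: one picks a maximal $u$-separated subset of $S^{n-1}$, and the disjoint Euclidean balls of radius $u/2$ around these points still lie in $(1+u/2)B_{2}^{n}$ (even though the centers are on the sphere), so the same volume comparison yields the upper bound $(2/u + 1)^{n}$. There is no real obstacle in this proof — the only subtlety worth double-checking is the inclusion $\bigsqcup_{x\in\mathcal{N}}(x + (u/2)B_{2}^{n}) \subset (1+u/2)B_{2}^{n}$, which follows from the triangle inequality together with $\Vert x\Vert_{2}\le 1$.
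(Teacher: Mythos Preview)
Your proposal is correct and is precisely the standard volumetric argument given in the cited reference (Corollary~4.2.13 of \cite{highdimension}); the paper itself does not supply an independent proof but simply invokes that corollary. There is nothing to add.
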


When $T$ is uncountable, we can not ensure the quantity $\sup_{t\in T}X_{t}$ is measurable. We, by convention, use the following definition for its expectation 
\begin{align}
	\textsf{E}\sup_{t\in T}X_{t}=\sup\{ \textsf{E}\sup_{t\in F}X_{t}:  F\subset T, \vert F\vert<\infty \}.\nonumber
\end{align}

\subsection{The Majorizing Measure Theorems}

With the generic chaining argument in hand, one can derive the bound of the suprema of the process by Talagrand's $\gamma$-functional. Talagrand proved $\gamma_{2}(T, d)$ is optimal for Gaussian processes with canonical distance on $T$, the majorizing measure theorem. However, this will no longer be the case in general. For some nongaussian processes, the generic chaining argument leads to a terrible upper bound; see an example in \cite{Bogucki}. Fortunately, Talagrand \cite{Talagrand_ajm_prob} and Latała \cite{Latala_gafa} generalize the majorizing measure theorem to some special log-concave-tailed canonical processes. Not long ago, Latała and Tkocz \cite{Latala_EJP} showed an optimal bound for the suprema of the canonical processes based on independent stardard variables with moments growing regularly, which is one of the most general results in this field. The chaining argument to estimate the upper bound of the suprema of processes in \cite{Latala_EJP} also appeared
in  Mendelson and Paouris's work \cite{Mendelson_JFA}. In this subsection, we mainly collect some majorizing measure theorems of a particular type of canonical processes, $\sum t_{i}\xi_{i}$, where $\sum t_{i}^{2}<\infty$ and $\{ \xi_{i}, i\ge 1\}$ are independent symmetric variables with logarithmically concave tails.

Recall that a random variable $\xi_{i}$ with logarithmically concave tails means that the function
\begin{align}
	U_{i}(x)=-\log\textsf{P}\{\vert \xi_{i}\vert \ge x \}\nonumber
\end{align}
is convex. Since it is only a matter of normalization, we assume that $U_{i}(1)=1$. Consider the function $\hat{U}_{i}(x)$ given by 
\begin{equation*}
	\hat{U}_{i}(x)=\left\{
	\begin{array}{cl}
		x^{2} &  \vert x\vert\le 1 \\
		2U(\vert x\vert)-1  &  \vert x\vert\ge 1. \\
	\end{array} \right.
\end{equation*}
Given $u>0$, we define for $\sum t_{i}^{2}\le 1$
\begin{align}
	\mathcal{N}_{u}(t)=\sup\big\{\sum_{i\ge 1}t_{i}a_{i}: \sum_{i\ge 1}\hat{U}_{i}(a_{i})\le u   \big\}.\nonumber
\end{align}
To get a feeling of this quantity, we carry out the meaning of $\mathcal{N}_{u}(t)$ in the simplest case, $U_{i}(x)=x^{2}, i\ge 1$. Note that $x^{2}\le \hat{U}_{i}(x)\le 2x^{2}$ in this case. Hence we have
\begin{align}
	\sqrt{u/2}\Vert t\Vert_{2}\le \mathcal{N}_{u}(t)\le \sqrt{u}\Vert t\Vert_{2}.\nonumber
\end{align} 
Given a set $T$, we call a sequence of partitions $(\mathcal{T}_{n})_{n\ge 0}$ of $T$ are admissible if $\vert \mathcal{T}_{0}\vert=1$, $\vert \mathcal{T}_{n}\vert\le 2^{2^{n}}$ for $n\ge 1$ and every set of $\mathcal{T}_{n+1}$ is contained in a set of $\mathcal{T}_{n}$. We denote by $T_{n}(t)$ the unique element of $\mathcal{T}_{n}$ which contains $t$. We denote by $\Delta_{d}(T)$ the diameter of the set $T$. Let
\begin{align}\label{Equation_gamma_functionals}
	\gamma_{\alpha}^{\prime}(T, d)=\inf_{\mathcal{T}}\sup_{t\in T}\sum_{n\ge 0}2^{n/\alpha}\Delta_{d}(T_{n}(t))
\end{align}
where the infimum is taken over all admissible partitions $\mathcal{T}$ of $T$. Talagrand \cite{Talagrand_annals_prob} showed that
\begin{align}
	\gamma_{\alpha}(T, d)\le \gamma_{\alpha}^{\prime}(T, d)\le C(\alpha)\gamma_{\alpha}(T, d).\nonumber
\end{align}
Define $B(u)=\{t: \mathcal{N}_{u}(t)\le u \}$. Given a number $r\ge 4$, let
\begin{align}
	\phi_{j}(s, t)=\inf\{u>0: s-t\in r^{-j}B(u)  \}.\nonumber
\end{align}

\begin{mylem}[Theorem 8.3.2 in \cite{Talagrand_chaining_book}]\label{Lemma_chaining}
	Let $\{\xi_{i}, i\ge 1\}$ be a sequence of independent symmetric variables with log-concave tails and $U_{i}(x)=-\log \textsf{P}\{\vert\xi_{i}\vert\ge x\}$. Assume there is a finite positive constant $\beta_{0}$ such that for any $i\ge 1, x\ge 1$
	\begin{align}
		 U_{i}(2x)\le \beta_{0}U_{i}(x), \quad U_{i}^{'}(0)\ge \frac{1}{\beta_{0}}.\nonumber
	\end{align}
Then, there exists $r_{0}(\beta_{0})$ and $K(\beta_{0})$ such that when $r\ge r_{0}(\beta_{0})$ and for each subset $T$ of $\{t :\sum t_{i}^{2}<\infty\}$ we can find an admissible sequence of partitions $(\mathcal{T}_{n})_{n\ge 0}$ of $T$ and for $T_{n}\in\mathcal{T}_{n}$ an integer $j_{n}(T_{n})\in \mathbb{Z}$ satisfying
\begin{align}
	\forall t, t^{\prime}\in T_{n},\quad \phi_{j_{n}(T_{n})}(t, t^{\prime})\le 2^{n}\nonumber
\end{align}
and 
\begin{align}
	\frac{1}{K(\beta_{0})r}\sup_{t\in T}\sum_{n\ge 0}2^{n}r^{-j_{n}(T_{n}(t))}\le \textsf{E}\sup_{t\in T}\sum t_{i}\xi_{i}\le C\sup_{t\in T}\sum_{n\ge 0}2^{n}r^{-j_{n}(T_{n}(t))},\nonumber
\end{align}
where $C$ is a universal constant and $T_{n}(t)$ is the unique set containing $t$ from $\mathcal{T}_{n}$.
\end{mylem}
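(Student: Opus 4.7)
The plan is to follow Talagrand's functional/partitioning strategy for log-concave canonical processes. The upper bound is a rather direct generic-chaining argument, while the lower bound is the deep half and requires a careful growth-condition construction on subsets of $T$. A guiding observation is that the function $\hat{U}_i$ — quadratic near the origin and equal to $2U_i(|x|)-1$ far away — is precisely the convex conjugate-like object that turns the ball $B(u) = \{t : \mathcal{N}_u(t) \le u\}$ into the set where the linear form $\sum t_i \xi_i$ behaves like a sum with mixed subgaussian/$U_i$-tail concentration.

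For the upper bound, I would first establish a one-step tail estimate for increments. If $s, t \in T$ satisfy $\phi_j(s,t) \le u$, i.e., $s-t \in r^{-j}B(u)$, then one shows
\[
\textsf{P}\Bigl\{\bigl|\sum_i (s_i - t_i)\xi_i\bigr| \ge C r^{-j}(v + u)\Bigr\} \le 2 e^{-v}, \quad v \ge u,
\]
which comes from a Bernstein-type calculation using the definition of $\mathcal{N}_u$ together with Chebyshev's inequality applied to the exponential moments implied by the convex functions $\hat{U}_i$. Given this increment control, generic chaining along any admissible partition $(\mathcal{T}_n)$ with per-cell integers $j_n(T_n)$ satisfying $\phi_{j_n(T_n)}\le 2^n$ on $T_n$ yields, by summing the telescoped increments $\sum_n 2^n r^{-j_n}$, the estimate $\textsf{E}\sup_{t\in T}\sum t_i \xi_i \lesssim \sup_{t\in T}\sum_{n \ge 0} 2^n r^{-j_n(T_n(t))}$. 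This half uses only that each $\xi_i$ has log-concave tails and gives the right-hand inequality in the conclusion.

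For the lower bound, I would use Talagrand's growth-condition method. Introduce a functional $F$ on finite subsets of $T$ (for instance $F(A) = \textsf{E}\sup_{t\in A}\sum t_i \xi_i$, or a regularized version) and prove the key growth lemma: if $A$ contains $N_n = 2^{2^n}$ points $t^{(1)},\dots,t^{(N_n)}$ that are pairwise $\phi_j$-separated at scale $2^n$, and $H_k$ denotes the $\phi_j$-ball of radius $2^{n-1}$ around $t^{(k)}$, then
\[
F(A) \ge c(\beta_0)\, 2^n r^{-j} + \min_{k \le N_n} F(A \cap H_k).
\]
The gain $2^n r^{-j}$ comes from a two-sided comparison for $\textsf{E}\max_{k}\sum_i (t^{(k)}_i - t^{(k')}_i)\xi_i$ when the differences live in $r^{-j}B(2^n)$; the regularity assumption \eqref{Eq_regular_condition} is exactly what is needed to make this maximum of order $2^n r^{-j}$ rather than a smaller quantity, since it rules out pathological tails of the $\xi_i$ and allows one to invert $U_i$ stably. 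Iterating this growth condition down through scales produces an admissible sequence of partitions and the integers $j_n(T_n)$, while respecting the cardinality constraint $|\mathcal{T}_n| \le 2^{2^n}$ via Talagrand's standard partitioning scheme applied inductively to each parent cell.

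The hard part will be the growth condition itself. One must prove a genuine two-sided estimate for the expected maximum of linear forms in the $\xi_i$ when the coefficient vectors are $\phi_j$-separated — not merely an upper bound. This is where the detour through $\hat{U}_i$ and $\mathcal{N}_u$ becomes unavoidable: working directly with $U_i$ loses the subgaussian regime near the origin and breaks the matching lower estimate. The regularity hypotheses $U_i(2x) \le \beta_0 U_i(x)$ and $U_i'(0) \ge 1/\beta_0$ must be used to control inversions of $U_i$ and to ensure that the lower bound on the maximum degrades only polynomially in $\beta_0$, so that the final constant $K(\beta_0)$ is finite. A secondary but nontrivial point is the choice of the threshold $r_0(\beta_0)$, which must be large enough that the geometric factors $r^{-j}$ dominate the inefficiencies introduced at each induction step.
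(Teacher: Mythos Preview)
The paper does not prove this lemma at all: it is stated as Theorem~8.3.2 of Talagrand's book \cite{Talagrand_chaining_book} and simply quoted as a known result, so there is no ``paper's own proof'' to compare against. Your sketch is a faithful outline of Talagrand's argument --- upper bound by generic chaining from a Bernstein-type increment estimate on $r^{-j}B(u)$, lower bound by the growth-condition/partitioning scheme with the regularity hypotheses used to stabilize the inversion of $U_i$ --- and in that sense it matches the source the paper cites, but none of this appears in the paper itself.
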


\begin{myrem}\label{Rem_gamma_equavilant}
	Let $\Delta_{2^{n}}(\cdot)$ be the diameter with respect to the distance induced by $L_{2^{n}}$ norm. In particular, $\Delta_{2^{n}}(T)=\sup_{t, s\in T}\big\Vert \sum (t_{i}-s_{i})\xi_{i} \big\Vert_{L_{2^{n}}}$Then,
	\begin{align}
		\inf\sup_{t\in T}\sum_{n\ge 0}\Delta_{2^{n}}(T_{n}(t))\asymp_{\beta}\sup_{t\in T}\sum_{n\ge 0}2^{n}r^{-j_{n}(T_{n}(t))},\nonumber
	\end{align}
where the infimum runs over all admissible sequences of partitions of $T$. Interested readers can refer to Remark 1.6 in \cite{Latala_EJP} for a detailed explanation.
\end{myrem}

In the special cases $U_{i}(x)=x^{\alpha}, 1\le \alpha\le 2$,  Talagrand \cite{Talagrand_ajm_prob} reformulates Lemma \ref{Lemma_chaining} in the following familiar version.

\begin{mylem}[Theorem 1.2 in \cite{Talagrand_ajm_prob}]\label{Lemma_chaining2}
	Let $T\subset \{t :\sum t_{i}^{2}<\infty\}$ and $\{\xi_{n}, n\ge 1 \}$ be a sequence of independent symmetric variables with $U_{i}(x)=x^{\alpha}, 1\le \alpha\le 2$. Set $\alpha^{*}=\alpha/(\alpha -1)$ be the conjugate exponent of $\alpha$. Then
	\begin{align}
		 \textsf{E}\sup_{t\in T}\sum t_{i}\xi_{i}\asymp_{\alpha} \max\big(\gamma_{2}(T, \Vert\cdot\Vert_{2}),\gamma_{\alpha}(T, \Vert\cdot\Vert_{\alpha^{*}})  \big),\nonumber
	\end{align}
where $(T, \Vert\cdot\Vert_{\alpha^{*}})$ means we use the distance induced by the norm $\Vert\cdot\Vert_{\alpha^{*}}$ on $T$.
\end{mylem}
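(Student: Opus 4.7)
The strategy is to specialize the general majorizing measure theorem (Lemma~\ref{Lemma_chaining}), combined with Remark~\ref{Rem_gamma_equavilant}, to the Weibull setting $U_i(x)=x^\alpha$ and then translate the resulting $L_{2^n}$-diameter into the two classical $\gamma$-functionals $\gamma_2(T,\|\cdot\|_2)$ and $\gamma_\alpha(T,\|\cdot\|_{\alpha^*})$ via the explicit moment formula in Lemma~\ref{Lem_alpha_2}(i). Since $U_i(x)=x^\alpha$ is convex with $U_i(2x)=2^\alpha U_i(x)$, the regularity hypothesis of Lemma~\ref{Lemma_chaining} is satisfied with a constant $\beta_0$ depending only on $\alpha$. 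Applying that lemma together with Remark~\ref{Rem_gamma_equavilant} then yields
\[
\textsf{E}\sup_{t\in T}\sum_i t_i\xi_i \;\asymp_\alpha\; \inf_{(\mathcal T_n)}\sup_{t\in T}\sum_{n\ge 0}\Delta_{L_{2^n}}(T_n(t)),
\]
where the infimum runs over admissible sequences of partitions of $T$ and $\Delta_{L_p}(S):=\sup_{s,s'\in S}\bigl\|\sum_i(s_i-s'_i)\xi_i\bigr\|_{L_p}$.

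Next I would unpack the $L_{2^n}$-diameter. By Lemma~\ref{Lem_alpha_2}(i), for $p\ge 2$ one has $\|\sum_i a_i\xi_i\|_{L_p}\asymp_\alpha \sqrt{p}\,\|a\|_2+p^{1/\alpha}\|a\|_{\alpha^*}$; applying this with $p=2^n$ and $a=s-s'$ gives
\[
\Delta_{L_{2^n}}(S)\;\asymp_\alpha\;2^{n/2}\,\Delta_{\|\cdot\|_2}(S)+2^{n/\alpha}\,\Delta_{\|\cdot\|_{\alpha^*}}(S).
\]
Substituting this into the previous display reduces the theorem to establishing
\[
\inf_{(\mathcal T_n)}\sup_{t\in T}\sum_{n\ge 0}\Bigl[2^{n/2}\Delta_{\|\cdot\|_2}(T_n(t))+2^{n/\alpha}\Delta_{\|\cdot\|_{\alpha^*}}(T_n(t))\Bigr]\;\asymp_\alpha\;\max\bigl(\gamma_2(T,\|\cdot\|_2),\,\gamma_\alpha(T,\|\cdot\|_{\alpha^*})\bigr).
\]

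The last step splits the mixed inf-sum into two separate $\gamma$-functionals, and this is where I expect the main obstacle. The lower bound ``$\gtrsim\max(\gamma_2,\gamma_\alpha)$'' is easy: the two summands are nonnegative, so each one alone lower-bounds the inf-sup, and by the equivalence $\gamma_\alpha\asymp\gamma'_\alpha$ recalled after~\eqref{Equation_gamma_functionals} each of those is comparable to the corresponding $\gamma$-functional. The upper bound is more delicate because a single admissible sequence must simultaneously control both diameters. To this end I would take near-optimal admissible sequences $(\mathcal T_n^{(1)})$ and $(\mathcal T_n^{(2)})$ witnessing $\gamma'_2(T,\|\cdot\|_2)$ and $\gamma'_\alpha(T,\|\cdot\|_{\alpha^*})$ respectively, and form the common refinement $\mathcal T_n:=\mathcal T_{n-1}^{(1)}\wedge\mathcal T_{n-1}^{(2)}$ (with $\mathcal T_0$ trivial). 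Admissibility is preserved because $|\mathcal T_n|\le 2^{2^{n-1}}\cdot 2^{2^{n-1}}=2^{2^n}$, and since each cell of $\mathcal T_n$ is contained in both $T_{n-1}^{(1)}(t)$ and $T_{n-1}^{(2)}(t)$, the two diameter sums along $(\mathcal T_n)$ are dominated by $\gamma'_2(T,\|\cdot\|_2)$ and $\gamma'_\alpha(T,\|\cdot\|_{\alpha^*})$ respectively, up to a harmless multiplicative constant absorbing the shift $n\mapsto n-1$. Combining the two bounds and using $A+B\asymp\max(A,B)$ for nonnegative reals concludes the proof.
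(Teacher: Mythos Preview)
The paper does not prove this lemma at all: it is quoted directly from Talagrand~\cite{Talagrand_ajm_prob}, with Remark~\ref{Rem_majorizing_measure} merely noting that the original majorizing-measure formulation is equivalent to the $\gamma_\alpha$-functional and that, conversely, one can read off from Lemmas~\ref{Lemma_chaining} and~\ref{Lemma_chaining2} the equivalence $\gamma_2+\gamma_\alpha\asymp_{\alpha}\sup_t\sum_n 2^n r^{-j_n(T_n(t))}$. Your proposal therefore supplies an argument the paper omits, and your route---specializing Lemma~\ref{Lemma_chaining} via Remark~\ref{Rem_gamma_equavilant}, expanding $\Delta_{L_{2^n}}$ through Lemma~\ref{Lem_alpha_2}(i), and splitting the resulting mixed functional into $\gamma_2$ and $\gamma_\alpha$ by the common-refinement trick---is correct and in the spirit of what the paper sketches; the refinement construction you use is exactly the one employed later in the proof of Lemma~\ref{Theo_bound_decoupled}.

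Two small remarks. First, Lemma~\ref{Lem_alpha_2}(i) is stated for $p\ge 2$, so the $n=0$ term (where $2^n=1$) needs a separate one-line treatment, e.g.\ $\Delta_{L_1}\le\Delta_{L_2}\asymp\Delta_{\|\cdot\|_2}$, which only affects constants. Second, when you verify the hypotheses of Lemma~\ref{Lemma_chaining}, note that for $U_i(x)=x^\alpha$ with $\alpha>1$ one has $U_i'(0)=0$, so the condition $U_i'(0)\ge 1/\beta_0$ as literally written is not met; this is a known wrinkle in the statement (the Weibull case is certainly covered by Talagrand's theorem, and the paper itself invokes Lemma~\ref{Lemma_chaining} in this setting), but it would be cleaner to either cite Theorem~8.3.2 of~\cite{Talagrand_chaining_book} directly or remark that the relevant growth condition is really $U_i(x)\ge x/\beta_0$ for $x\ge 1$.
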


\begin{myrem}\label{Rem_majorizing_measure}
	(i) Readers might notice that the bound in \cite{Talagrand_ajm_prob} contains the following quantity:
	\begin{align}
		\inf \sup_{t\in T}\int_{0}^{\infty} \log^{1/\alpha}\big(\frac{1}{\mu\{B_{\alpha^{*}}(u, \varepsilon) \}}\big)\, d\varepsilon,\nonumber
	\end{align}
	where $B_{\alpha^{*}}(u, \varepsilon)=\big\{ t\in T: \Vert t-u \Vert_{\alpha^{*}}\le \varepsilon  \big\}$, and the infimum is taken over of the probability measures $\mu$ on $T$. Talagrand \cite{Talagrand_annals_prob} showed that this quantity is equivalent to $\gamma_{\alpha}(T, \Vert\cdot\Vert_{\alpha^{*}})$. We use such version here due to that  $\gamma_{\alpha}(T, \Vert\cdot\Vert_{\alpha^{*}})$ is easy to obtain by a chaining argument.

	(ii) One can deduce from Lemma \ref{Lemma_chaining} and Lemma \ref{Lemma_chaining2} that,
	\begin{align}
		\gamma_{2}(T, \Vert\cdot\Vert_{2})+\gamma_{\alpha}(T, \Vert\cdot\Vert_{\alpha^{*}})\asymp_{\alpha, \beta_{0}} \sup_{t\in T}\sum_{n\ge 0}2^{n}r^{-j_{n}(A_{n}(t))}.\nonumber
	\end{align}
 Talagrand showed the equivalence of the above two quantities for the cases $\alpha=1, 2$; see Section 8.3 in \cite{Talagrand_chaining_book}. The other cases are similar.
\end{myrem}

\section{Proofs for Our Main Results}

\subsection{The Proof of Theorem \ref{Theo_logconcave_deviation}}

Recall that a homogeneous chaos is a random variable with the following form
\begin{align}
	S_A=\sum_{i, j=1}^{n}a_{ij}\xi_{i} \xi_{j},\nonumber
\end{align}
where $\xi_{1},\cdots, \xi_{n}$ are independent centered variables and $A=(a_{ij})$ is a fixed matrix. $S_A$ is tetrahedral if all diagonal entries of $(a_{ij})$ are zero.

As a basic tool for studying chaoses, the decoupling technique is applied to control $S_A$ by its decoupled version $\tilde{S}_{A}$ of the following form
\begin{align}
	\tilde{S}_A:=\sum_{i, j=1}^{n}a_{ij}\xi_{i}\tilde{\xi}_{j},\nonumber
\end{align}
where $(\tilde{\xi}_{1},\cdots, \tilde{\xi}_{n})^\top$ are independent copies of $(\xi_{1},\cdots, \xi_{n})^\top$.
 Kwapie\'{n} \cite{Kwapien_Decoupling} obtained the following  classic decoupling result for the tetrahedral case,
\begin{align}\label{Res_decoup}
	\textsf{E}F\big(\sum_{i\neq j}a_{ij}\xi_{i}\xi_{j}\big)\le \textsf{E}F\big(4\sum_{i, j}a_{ij}\xi_{i}\tilde{\xi_{j}}\big),
\end{align}
where $(\tilde{\xi_{i}}, i\le n)$ is an independent copy of $(\xi_{i}, i\le n)$ and $F: \mathbb{R}\to \mathbb{R}$ is some convex function. Let $\mathcal{A}$ be a collection of matrices. The following decoupling inequality is a slight variation of (\ref{Res_decoup}), see Theorem 2.4 in \cite{Rauhut_CPAM}.
\begin{align}\label{Res_decoup_variation}
	\textsf{E}\sup_{A\in\mathcal{A}}F\big(\sum_{i\neq j}a_{ij}\xi_{i}\xi_{j}\big)\le \textsf{E}\sup_{A\in\mathcal{A}}F\big(4\sum_{i, j}a_{ij}\xi_{i}\tilde{\xi_{j}}\big).
\end{align}

Arcones and Gin\'{e} \cite{Arcones_Gine} obtained a decoupling inequality for nonhomogeneous Gaussian chaoses generated by \eqref{Eq_polynomial}. In particular, let $g=(g_{1},\cdots, g_{n})^\top$ be a standard Gaussian vector ($g_{i}, i=1,\cdots, n$ are i.i.d. standard Gaussian variables) and $\tilde{g}$ be an independent copy of $g$, then
\begin{align}\label{Gaussian_decoupling}
	\textsf{E}\sup_{A\in \mathcal{A}}\vert g^\top Ag-\textsf{E} g^\top Ag \vert^{p}\le C^{p}\textsf{E}\sup_{A\in\mathcal{A}}\vert g^\top A\tilde{g}\vert^{p}.
\end{align}

To prove Theorem \ref{Theo_logconcave_deviation}, we first show the following novel decoupling inequality.

\begin{mypro}\label{Prop_decoupling}
	Let $F$ be a convex function satisfying $F(x)=F(-x), \forall x\in \mathbb{R}$.  Assume $\xi=(\xi_{1}, \cdots, \xi_{n})^\top$ and $ \eta=(\eta_{1}, \cdots, \eta_{n})^\top$ are random vectors with independent centered entries. Let for any $t>0$ and $i\ge 1$, the independent random variables $\xi_{i}, \eta_{i}$ satisfy for  some $c\ge 1$
	\begin{align}\label{Condition_tail_decoupling}
		\textsf{P}\{ \xi_{i}^{2}\ge t \}\le c\textsf{P}\{ c\vert \eta_{i}\tilde{\eta}_{i}\vert\ge t \},
	\end{align}
	where $\tilde{\eta}_{i}$ is an independent copy of $\eta_{i}$. 
	Then, there exists a constant $C$ depending only on $c$ such that
	\begin{align}
		\textsf{E}\sup_{A\in \mathcal{A}}F\big(\xi^\top A\xi-\textsf{E}\xi^\top A\xi\big)\le \textsf{E}\sup_{A\in \mathcal{A}}F\big( C\eta^\top A\tilde{\eta}\big)\nonumber
	\end{align}
	where  $\mathcal{A}$ is a set of $n\times n$ fixed matrices.
\end{mypro}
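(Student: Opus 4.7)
I would split the centered chaos into its tetrahedral and diagonal pieces,
\[
\xi^\top A\xi - \textsf{E}\xi^\top A\xi = \sum_{i\neq j}a_{ij}\xi_i\xi_j + \sum_i a_{ii}(\xi_i^2-\textsf{E}\xi_i^2),
\]
and treat each piece separately. Since $F$ is convex and even, the inequality $F(x+y)\le\tfrac12(F(2x)+F(2y))$ together with subadditivity of $\sup_A$ reduces the proposition to bounding each of $\textsf{E}\sup_{A\in\mathcal{A}}F(2\sum_{i\neq j}a_{ij}\xi_i\xi_j)$ and $\textsf{E}\sup_{A\in\mathcal{A}}F(2\sum_i a_{ii}(\xi_i^2-\textsf{E}\xi_i^2))$ by a constant multiple of $\textsf{E}\sup_A F(C\eta^\top A\tilde{\eta})$.

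\textbf{Tetrahedral part.} I would apply the Kwapie\'n-type decoupling inequality \eqref{Res_decoup_variation} to pass to the decoupled bilinear form,
\[
\textsf{E}\sup_A F\bigl(2\sum_{i\neq j}a_{ij}\xi_i\xi_j\bigr)\le \textsf{E}\sup_A F\bigl(8\sum_{i,j}a_{ij}\xi_i\tilde{\xi}_j\bigr).
\]
To transfer $\xi_i\tilde{\xi}_j$ into $\eta_i\tilde{\eta}_j$, I would exploit the pointwise inequality $|\xi_i\tilde{\xi}_j|\le\tfrac12(\xi_i^2+\tilde{\xi}_j^2)$ together with the hypothesis \eqref{Condition_tail_decoupling} to obtain a tail bound of the form $\textsf{P}\{|\xi_i\tilde{\xi}_j|\ge t\}\lesssim \textsf{P}\{C|\eta_i\tilde{\eta}_j|\ge t\}$, where I use that $\eta_i\tilde{\eta}_i$ and $\eta_i\tilde{\eta}_j$ are equal in distribution since the $\eta$'s and $\tilde{\eta}$'s are independent copies. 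After introducing iid Rademacher signs to make both sides symmetric, the contraction principle (Lemma \ref{Lem_Comparison}) yields $\textsf{E}\sup_A F(C\sum_{i,j}a_{ij}\xi_i\tilde{\xi}_j)\le \textsf{E}\sup_A F(C'\eta^\top A\tilde{\eta})$.

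\textbf{Diagonal part.} First I would symmetrize via Jensen's inequality with an independent copy $\tilde{\xi}$ of $\xi$,
\[
\textsf{E}F\bigl(\sum_i a_{ii}(\xi_i^2-\textsf{E}\xi_i^2)\bigr)\le \textsf{E}F\bigl(\sum_i a_{ii}(\xi_i^2-\tilde{\xi}_i^2)\bigr).
\]
The random variables $\xi_i^2-\tilde{\xi}_i^2$ are symmetric and independent, so inserting iid Rademachers $\varepsilon_i$ preserves the joint distribution; combining with convexity and evenness of $F$ gives an upper bound by $\textsf{E}F(2\sum_i a_{ii}\varepsilon_i\xi_i^2)$. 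Now $\varepsilon_i\xi_i^2$ is symmetric with tail directly controlled by the hypothesis: introducing auxiliary iid Rademachers $\varepsilon'_i$, the variable $\varepsilon'_i|\eta_i\tilde{\eta}_i|$ is also symmetric, and Lemma \ref{Lem_Comparison} applies to yield
\[
\textsf{E}F\bigl(\sum_i a_{ii}\varepsilon_i\xi_i^2\bigr)\le \textsf{E}F\bigl(c^2\sum_i a_{ii}\varepsilon'_i|\eta_i\tilde{\eta}_i|\bigr).
\]
A final symmetrization with an additional independent copy $(\eta',\tilde{\eta}')$ and convexity of $F$ replaces the symmetrized proxy $\varepsilon'_i|\eta_i\tilde{\eta}_i|$ by a constant multiple of $\eta_i\tilde{\eta}_i$, producing a bound by $\textsf{E}F(C\sum_i a_{ii}\eta_i\tilde{\eta}_i)$; this quantity is in turn controlled by $\textsf{E}\sup_A F(C'\eta^\top A\tilde{\eta})$ (for instance, by decomposing $\eta^\top A\tilde{\eta}$ into its diagonal and off-diagonal parts and using convexity plus the conditional-mean-zero property of the off-diagonal piece).

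\textbf{Main obstacle.} The principal technical difficulty is that hypothesis \eqref{Condition_tail_decoupling} only controls the tail of $\xi_i^2$ by that of the product $|\eta_i\tilde{\eta}_i|$, rather than relating $|\xi_i|$ directly to some single $|\eta_i|$, so the standard contraction principle cannot be applied in one step either to the linear forms inside $\xi^\top A\tilde{\xi}$ or to the diagonal quadratic form. The delicate part of the argument is to layer several Rademacher symmetrizations and independent-copy tricks so that both sides of every comparison become symmetric (making Lemma \ref{Lem_Comparison} applicable), while ensuring that the final right-hand side is the genuine decoupled bilinear form $\eta^\top A\tilde{\eta}$ and not some symmetrized surrogate; only in the decoupled form will the bound plug into the chaining arguments used later in the paper.
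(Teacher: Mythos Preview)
Your overall plan and your treatment of the diagonal part are essentially the paper's argument. The gap is in the tetrahedral part.

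You propose to compare the products $\xi_i\tilde{\xi}_j$ with $\eta_i\tilde{\eta}_j$ and then invoke the contraction principle (Lemma~\ref{Lem_Comparison}). But Lemma~\ref{Lem_Comparison} requires the variables on each side to be \emph{independent}, and the family $\{\xi_i\tilde{\xi}_j\}_{i,j}$ is not: for instance $\xi_1\tilde{\xi}_1$ and $\xi_1\tilde{\xi}_2$ share the factor $\xi_1$. So the contraction step, as written, does not go through. (A secondary issue: your claim that $\eta_i\tilde{\eta}_i$ and $\eta_i\tilde{\eta}_j$ are equidistributed uses that the coordinates of $\eta$ are identically distributed, which the proposition does not assume.)

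The paper's way around this is to extract from hypothesis~\eqref{Condition_tail_decoupling} a \emph{single-coordinate} tail comparison,
\[
\textsf{P}\{|\xi_i|>t\}=\textsf{P}\{\xi_i^2>t^2\}\le c\,\textsf{P}\{c|\eta_i\tilde{\eta}_i|\ge t^2\}\le 2c\,\textsf{P}\{\sqrt{c}\,|\eta_i|>t\},
\]
using the trivial union bound $\{c|\eta_i\tilde{\eta}_i|\ge t^2\}\subset\{\sqrt{c}|\eta_i|>t\}\cup\{\sqrt{c}|\tilde{\eta}_i|>t\}$. Now the contraction principle can be applied legitimately: after symmetrizing with Rademachers, one fixes $\tilde{\xi}$ and views $\sup_A F(\sum_{i,j}a_{ij}x_i\tilde{\xi}_j)$ as a convex function of the \emph{independent} coordinates $x_i$, replaces $\varepsilon_i\xi_i$ by $\epsilon_i\eta_i$, and then repeats with the roles of the two factors interchanged. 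This is exactly the missing idea; once you have it, the rest of your sketch (including the final ``diagonal $\le$ full bilinear'' step, which the paper carries out via the standard Bernoulli random-index-set argument) lines up with the paper's proof.
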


\begin{myrem}\label{Rem_decoupling_inequality_2}
	(i)  Let the entries of $\xi$ be independent centered variables with  log-concave tails, namely the functions
	\begin{align}
		U_{i}(t):=-\log \textsf{P}\{\vert \xi_{i}\vert\ge t  \}, \quad i\le n\nonumber
	\end{align}
	are convex. Set $\eta, \tilde{\eta}$ be independent copies of $\xi$. Note that $U_{i}(0)=0$ and $U_{i}^{\prime}(0)\ge 0$ for $i\le n$. Hence, $\xi, \eta$ and $\tilde{\eta}$ satisfy the condition (\ref{Condition_tail_decoupling}).
	
	(ii) Let $\xi, \eta$ and $\tilde{\eta}$ be independent standard Gaussian vectors. Note that there exists a constant $C$ such that
	\begin{align}
		\frac{\sqrt{p}}{C}\le (\textsf{E}\vert\xi_{i}\vert^{p})^{1/p}\le C\sqrt{p}, \quad 1\le p,\,\, 1\le i\le n.\nonumber 
	\end{align}
	Then, $\xi, \eta$ and $\tilde{\eta}$ satisfy the condition (\ref{Condition_tail_decoupling}) due to Lemma \ref{Lem_low bound}.	
\end{myrem}

\begin{proof}[Proof of Proposition \ref{Prop_decoupling}]
	Without loss of generality, we assume $\textsf{E}\xi_{i}^{2}=1$. Due to the convexity of $F$, we have
	\begin{align}\label{Eq_decoupling_decomposition_1}
		\textsf{E}\sup_{A\in \mathcal{A}}F\big(\xi^\top A\xi -\textsf{E}\xi^\top A\xi\big)&=\textsf{E}\sup_{A\in \mathcal{A}}F\Big(\sum_{i\neq j}a_{ij}\xi_{i}\xi_{j}+\sum_{i\le n}(\xi_{i}^{2}-1)a_{ii}\Big)\nonumber\\
		&\le \frac{1}{2}\textsf{E}\sup_{A\in \mathcal{A}}F\big(2\sum_{i\neq j}a_{ij}\xi_{i}\xi_{j}\big)+\frac{1}{2}\textsf{E}\sup_{A\in \mathcal{A}}F\big(2\sum_{i\le n}(\xi_{i}^{2}-1)a_{ii}\big).
	\end{align}
By the condition \eqref{Condition_tail_decoupling}, we have for $t\ge 0$
\begin{align}
	\textsf{P}\{\vert\xi_{i}\vert>t  \}&=\textsf{P}\{\xi_{i}^{2}>t^{2}  \}\le c\textsf{P}\{ c\vert \eta_{i}\tilde{\eta}_{i}\vert\ge t^{2} \}\nonumber\\
	&\le c\big( \textsf{P}\{\sqrt{c}\vert\eta_{i}\vert>t  \}  +\textsf{P}\{\sqrt{c}\vert\tilde{\eta}_{i}\vert>t  \} \big)\le 2c\textsf{P}\{\sqrt{c}\vert\eta_{i}\vert>t  \}.\nonumber
\end{align}
Hence, the classical decoupling inequality (\ref{Res_decoup_variation}) and Lemma \ref{Lem_Comparison}  lead to that
\begin{align}
	\textsf{E}\sup_{A\in \mathcal{A}}F(2\sum_{i\neq j}a_{ij}\xi_{i}\xi_{j})&\le \textsf{E}\sup_{A\in \mathcal{A}}F(8\sum_{i, j}a_{ij}\xi_{i}\tilde{\xi}_{j})\le\textsf{E}\sup_{A\in \mathcal{A}}F\big(32  \sum_{i, j}a_{ij} (\varepsilon_{i}\xi_{i})(\tilde{\varepsilon}_{j}\tilde{\xi}_{j})\big) \nonumber\\
	&\le \textsf{E}\sup_{A\in \mathcal{A}}F\big(576c^{4}  \sum_{i, j}a_{ij} (\epsilon_{i}\eta_{i})(\tilde{\epsilon}_{j}\tilde{\eta}_{j})\big)\nonumber\\
	&\le \textsf{E}\sup_{A\in \mathcal{A}}F\big(2304c^{4}  \sum_{i, j}a_{ij} \eta_{i}\tilde{\eta}_{j}\big).\nonumber
\end{align}
Here, $\{\varepsilon_{i}, \epsilon_{i}, i\le n  \}$ is a sequence of i.i.d. Rademacher variables independent of the other variables,  and the second and the fourth inequalities are due to standard symmetrization and  desymmetrization arguments; see Lemma 6.3 in \cite{Ledoux_Talagrand_book}.

We next turn to the other summand term in (\ref{Eq_decoupling_decomposition_1}).
\begin{align}
	\textsf{E}\sup_{A\in \mathcal{A}}F\big(2\sum_{i\le n}(\xi_{i}^{2}-1)a_{ii}\big)&=\textsf{E}\sup_{A\in \mathcal{A}}F\big(2\sum_{i\le n}(\xi_{i}^{2}-\textsf{E}_{\tilde{\xi}}\tilde{\xi}_{i}^{2})a_{ii}\big)\nonumber\\
	&\le \textsf{E}\textsf{E}_{\tilde{\xi}}\sup_{A\in \mathcal{A}}F\big(2\sum_{i\le n}(\xi_{i}^{2}-\tilde{\xi}^{2}_{i})a_{ii}\big)
	=\textsf{E}\sup_{A\in \mathcal{A}}F\big(2\sum_{i\le n}\varepsilon_{i}(\xi_{i}^{2}-\tilde{\xi}^{2}_{i})a_{ii}\big)\nonumber\\
	&\le \frac{1}{2}\textsf{E}\sup_{A\in \mathcal{A}}F(4\sum_{i\le n}\varepsilon_{i}\xi_{i}^{2}a_{ii})+\frac{1}{2}\textsf{E}\sup_{A\in \mathcal{A}}F(-4\sum_{i\le n}\varepsilon_{i}\tilde{\xi}^{2}a_{ii})\nonumber\\
	&\le \textsf{E}\sup_{A\in \mathcal{A}}F(4\sum_{i\le n}\varepsilon_{i}\xi_{i}^{2}a_{ii}),\nonumber
\end{align}
where  $\textsf{E}_{\tilde{\xi}}$ means that we take expectations with respect to $\tilde{\xi}$, i.e., conditioned on $\xi$. 

It follows from Lemma \ref{Lem_Comparison} and the condition (\ref{Condition_tail_decoupling}) that
\begin{align}
	\textsf{E}\sup_{A\in \mathcal{A}}F\big(4\sum_{i\le n}\varepsilon_{i}\xi_{i}^{2}a_{ii}\big)&\le \textsf{E}\sup_{A\in \mathcal{A}}F\big(4c^{2}\sum_{i\le n}\varepsilon_{i}\eta_{i}\tilde{\eta}_{i}a_{ii}\big)\nonumber\\
	&\le \textsf{E}\sup_{A\in \mathcal{A}}F\big(8c^{2}\sum_{i\le n}\eta_{i}\tilde{\eta}_{i}a_{ii}\big).
\end{align}

We have, by the convexity and symmetry of $F$,
\begin{align}
	\textsf{E}\sup_{A\in \mathcal{A}}F\big(8c^{2}\sum_{i\le n}a_{ii}\eta_{i}\tilde{\eta}_{i}\big)&=\textsf{E}\sup_{A\in \mathcal{A}}F\big(8c^{2}\sum_{i,  j}a_{ij}\eta_{i}\tilde{\eta}_{j}-8c^{2}\sum_{i\neq  j}a_{ij}\eta_{i}\tilde{\eta}_{j}\big)\nonumber\\
	&\le \frac{1}{2}\textsf{E}\sup_{A\in \mathcal{A}}F\big(16c^{2}\sum_{i,  j}a_{ij}\eta_{i}\tilde{\eta}_{j}\big)+\frac{1}{2}\textsf{E}\sup_{A\in \mathcal{A}}F\big(16c^{2}\sum_{i\neq  j}a_{ij}\eta_{i}\tilde{\eta}_{j}\big).\nonumber
\end{align}

Next, we shall bound $\textsf{E}\sup_{A\in \mathcal{A}}F\big(16c^{2}\sum_{i\neq  j}a_{ij}\eta_{i}\tilde{\eta}_{j}\big)$ with a classical argument. Consider a sequence of independent Bernoulli variables $\delta_{1}, \cdots, \delta_{n}$ with $\textsf{P}\{ \delta_{i}=0\}=\textsf{P}\{ \delta_{i}=1\}=1/2$. Define a random index set as follows
\begin{align}
	I:=\{ i: \delta_{i} =1\}.\nonumber
\end{align}

Note that
\begin{align}
	\sum_{i\neq  j}a_{ij}\eta_{i}\tilde{\eta}_{j}=4\textsf{E}_{\delta}\sum_{i\neq  j}\delta_{i}(1-\delta_{j})a_{ij}\eta_{i}\tilde{\eta}_{j}=4\textsf{E}_{\delta}\sum_{(i, j)\in I\times I^{c}}a_{ij}\eta_{i}\tilde{\eta}_{j}.\nonumber
\end{align}
Hence, the convexity of $F$ yields that
\begin{align}
	\textsf{E}\sup_{A\in \mathcal{A}}F\big(16c^{2}\sum_{i\neq  j}a_{ij}\eta_{i}\tilde{\eta}_{j}\big)&=\textsf{E}\sup_{A\in \mathcal{A}}F\big(64c^{2}\textsf{E}_{\delta}\sum_{(i, j)\in I\times I^{c}}a_{ij}\eta_{i}\tilde{\eta}_{j}\big)\nonumber\\
	&\le \textsf{E}\sup_{A\in \mathcal{A}}F\big(64c^{2}\sum_{(i, j)\in I\times I^{c}}a_{ij}\eta_{i}\tilde{\eta}_{j}\big).\nonumber
\end{align}

Let $[n]=\{1,\cdots, n\}$. We make the following decomposition
\begin{align}
	\sum_{(i, j)\in [n]\times [n]}a_{ij}\eta_{i}\tilde{\eta}_{j}&=\sum_{(i, j)\in I\times I^{c}}a_{ij}\eta_{i}\tilde{\eta}_{j}+\sum_{(i, j)\in I\times I}a_{ij}\eta_{i}\tilde{\eta}_{j}+\sum_{(i, j)\in  I^{c}\times [n]}a_{ij}\eta_{i}\tilde{\eta}_{j}\nonumber\\
	&=: Y+Z_{1}+Z_{2}.\nonumber
\end{align}
Condition on all random variables except $(\eta_{i})_{i\in I^{c}}$ and $(\tilde{\eta}_{j})_{j\in I}$. We denote this conditional expectation by $\textsf{E}^{\prime}$. Note that
\begin{align}
	\sup_{A\in \mathcal{A}}F(64c^{2} Y)&=\sup_{A\in \mathcal{A}}F\Big(64c^{2} \big(Y+\textsf{E}^{\prime}(Z_{1}+Z_{2})\big)\Big)\nonumber\\
	&\le \textsf{E}^{\prime}\sup_{A\in \mathcal{A}}F\big(64c^{2}(Y+Z_{1}+Z_{2})\big).\nonumber
\end{align} 
Taking the expectations of both sides implies that
\begin{align}
	\textsf{E}\sup_{A\in \mathcal{A}}F\big(64c^{2}\sum_{(i, j)\in I\times I^{c}}a_{ij}\eta_{i}\tilde{\eta}_{j}\big)\le \textsf{E}\sup_{A\in \mathcal{A}}F\big(64c^{2}\sum_{(i, j)\in [n]\times [n]}a_{ij}\eta_{i}\tilde{\eta}_{j}\big).
\end{align}

Until now, we have obtained 
\begin{align}
	\textsf{E}\sup_{A\in \mathcal{A}}F(\xi^\top A\xi-\textsf{E}\xi^\top A\xi)\le& \frac{1}{2}\textsf{E}\sup_{A\in \mathcal{A}}F(2304c^{4}\eta^\top A\tilde{\eta})\nonumber\\
	&+\frac{1}{4}\textsf{E}\sup_{A\in \mathcal{A}}F(16c^{2}\eta^\top A\tilde{\eta})+\frac{1}{4}\textsf{E}\sup_{A\in \mathcal{A}}F(64c^{2}\eta^\top A\tilde{\eta}).\nonumber
\end{align}
Let $C=2304c^{4}$. Using Lemma \ref{Lem_Comparison} again, we obtain the desired result. 
\end{proof}

With Proposition \ref{Prop_decoupling} in hand, we shall prove Theorem \ref{Theo_logconcave_deviation} via a chaining argument, appearing in \cite{Latala_EJP,Mendelson_JFA}. This chaining is based on the distance generated by $\Vert\cdot\Vert_{L_{2^{n}}}$ (see \eqref{Eq_2n} below). We also note that log-concave-tailed variables are subexponential. Hence, for $p\ge 1$, its $p$-th moment exists.

\begin{proof}[Proof of Theorem \ref{Theo_logconcave_deviation}]
	
	We define a subset $\mathcal{A}_{n}$ of $\mathcal{A}$ by selecting exactly one point from each $T\in \mathcal{T}_{n}$, where $(\mathcal{T}_{n})_{n\ge 0}$ is an admissible sequence of partitions of $\mathcal{A}$. In this way, we obtain an admissible sequence $(\mathcal{A}_{n})_{n\ge 0}$ of subsets of $\mathcal{A}$. Let $\pi=\{ \pi_{n}, n\ge 0\}$ be a sequence of functions $\pi_{n}: \mathcal{A}\to \mathcal{A}_{n}$ such that $\pi_{n}(A)=\mathcal{A}_{n}\cap T_{n}(A)$, where $T_{n}(A)$ is the element of $\mathcal{T}_{n}$ containing $A$. Let $l$ be the largest integer such that $2^{l}\le 4p$.
	
	We first make the following decomposition:
	\begin{align}\label{Eq_decomposition_1_new}
		\big\Vert \sup_{A\in \mathcal{A}}\vert \xi^\top  A\tilde{\xi}\vert\big\Vert_{L_{p}}\le\big\Vert \sup_{A\in \mathcal{A}}\vert \xi^\top  A\tilde{\xi}-\xi^\top \pi_{l}(A)\tilde{\xi}\vert\big\Vert_{L_{p}}+\big\Vert \sup_{A\in \mathcal{A}}\vert \xi^\top \pi_{l}(A)\tilde{\xi}\vert\big\Vert_{L_{p}}.
	\end{align}
	Then, we build the following chain:
	\begin{align}
		\xi^\top  A\tilde{\xi}-\xi^\top \pi_{l}(A)\tilde{\xi}
		=\sum_{n\ge l}\Big(\xi^\top \pi_{n+1}(A)\tilde{\xi}-\xi^\top \pi_{n}(A)\tilde{\xi}\Big)\nonumber.
	\end{align}
	Denote
	\begin{align}\label{Eq_2n}
		d_{2^{n}}\big(\pi_{n+1}(A), \pi_{n}(A)
		=\Big(\textsf{E}_{\xi} \big\vert\xi^\top \pi_{n+1}(A)\tilde{\xi}-\xi^\top \pi_{n}(A)\tilde{\xi}\big\vert^{2^{n}}\Big)^{1/2^{n}}
	\end{align}
	and set $\Omega_{n, t, u}$ be the event
	\begin{align}
		\Big\{ \big\vert\xi^\top \pi_{n+1}(A)\tilde{\xi}-\xi^\top \pi_{n}(A)\tilde{\xi} \big\vert\le u\cdot d_{2^{n}}\big(\pi_{n+1}(A), \pi_{n}(A)\big) \Big \}.\nonumber
	\end{align}
	Conditioned on $\tilde{\xi}$, Chebyshev's inequality yields $\textsf{P}_{\xi}\{\Omega_{n, t, u}^{c}  \}\le u^{-2^{n}}$, so if we set $\Omega(u)=\bigcap_{n\ge l}\bigcap_{A}\Omega_{n, t, u}$, by the union bound we easily find that
	\begin{align}
		\textsf{P}_{\xi}\{\Omega(u)^{c} \}\le \sum_{n\ge l}\vert \mathcal{A}_{n+1}\vert\vert\mathcal{A}_{n}\vert u^{-2^{n}}\le \sum_{n\ge l} \big(\frac{8}{u}\big)^{2^{n}}\le \frac{2\cdot 64^{p}}{u^{2p}},\quad
		u\ge 16.\nonumber
	\end{align}
	Since on $\Omega_{u}$ we have
	\begin{align}
		\sup_{A\in \mathcal{A}} \vert\xi^\top  A\tilde{\xi}-\xi^\top \pi_{l}(A)\tilde{\xi} \vert\le u\cdot\sup_{A\in\mathcal{A}} \sum_{n\ge l}d_{2^{n}}\big(\pi_{n+1}(A), \pi_{n}(A)\big)\nonumber.
	\end{align}
	Then, we have for $u\ge 16$
	\begin{align}
		\textsf{P}_{\xi}\Big\{ &\sup_{A\in \mathcal{A}} \vert\xi^\top  A\tilde{\xi}-\xi^\top \pi_{l}(A)\tilde{\xi} \vert> u\cdot\sup_{A\in\mathcal{A}} \sum_{n\ge l}d_{2^{n}}\big(\pi_{n+1}(A), \pi_{n}(A)\big)  \Big\}\le \frac{2\cdot 64^{p}}{u^{2p}}.\nonumber
	\end{align}
	Integration yields that
	\begin{align}
		\Big(\textsf{E}_{\xi} \sup_{A\in \mathcal{A}}\vert \xi^\top  A\tilde{\xi}-\xi^\top \pi_{l}(A)\tilde{\xi}\vert^{p}\Big)^{1/p}&\lesssim \sup_{A\in\mathcal{A}} \sum_{n\ge l}d_{2^{n}}\big(\pi_{n+1}(A), \pi_{n}(A)\big)\nonumber\\
		&\lesssim \sup_{A\in\mathcal{A}} \sum_{n\ge 0}\Delta_{2^{n}}\big(T_{n}(A) \big),\nonumber
	\end{align}
	where $\Delta_{2^{n}}\big(T_{n}(A) \big)$ is the diameter of the unique set $T_{n}(A)$ from $\mathcal{T}_{n}$ containing $A$ with respect to the distance $d_{2^{n}}(\cdot, \cdot)$. Then, 
	\begin{align}
		\Big(\textsf{E}_{\xi} \sup_{A\in \mathcal{A}}\vert \xi^\top  A\tilde{\xi}-\xi^\top \pi_{l}(A)\tilde{\xi}\vert^{p}\Big)^{1/p}\lesssim \inf\sup_{A\in\mathcal{A}} \sum_{n\ge 0}\Delta_{2^{n}}\big(T_{n}(A) \big),\nonumber
	\end{align}
	where the infimum runs over all admissible sequences of partitions $(\mathcal{T}_{n})$ of the set $\mathcal{A}$.
	Set $\varepsilon=(\varepsilon_{1},\cdots, \varepsilon_{n})^\top$ independent of $\xi, \tilde{\xi}$. Then, it follows from Lemma \ref{Lemma_chaining} and Remark \ref{Rem_gamma_equavilant}
	\begin{align}
		\inf\sup_{A\in\mathcal{A}} \sum_{n\ge l}\Delta_{2^{n}}\big(T_{n}(A) \big)&\lesssim_{\beta_{0}} \textsf{E}_{\xi, \varepsilon} \sup_{A\in \mathcal{A}}\big( (\varepsilon\circ\xi)^\top  A\tilde{\xi}\big)\lesssim \textsf{E}_{\xi} \sup_{A\in \mathcal{A}}\big\vert \xi^\top  A\tilde{\xi}\big\vert,
	\end{align}
	where $\varepsilon\circ\xi=(\varepsilon_{i}\xi_{i}, i\le n)^\top$ and the inequalities are due to the symmetrization argument (see Lemma 6.3 in \cite{Ledoux_Talagrand_book}).
	
	As for the other part of (\ref{Eq_decomposition_1_new}), we have for $p\ge 1$
	\begin{align}
		\textsf{E}\sup_{A\in \mathcal{A}}\vert \xi^\top \pi_{l}(A)\tilde{\xi}\vert^{p}\le \sum_{B\in \mathcal{A}_{l}}	\textsf{E}\vert \xi^\top  B\tilde{\xi}\vert^{p}\le 16^{p}\sup_{A\in \mathcal{A}}\textsf{E}\vert \xi^\top  A\tilde{\xi}\vert^{p},\nonumber
	\end{align}
	and thus
	\begin{align}
		\Big\Vert\sup_{A\in \mathcal{A}}\vert \xi^\top \pi_{l}(A)\tilde{\xi}\vert\Big\Vert_{L_{p}}\le 16\sup_{A\in \mathcal{A}}\Vert \xi^\top  A\tilde{\xi}\Vert_{L_{p}}.\nonumber
	\end{align}
	Then, Remark \ref{Rem_decoupling_inequality_2} (i) yields that
	\begin{align}
		\Big\Vert \sup_{A\in \mathcal{A}}\big\vert S_{\mathcal{A}}(\xi)-\textsf{E}S_{\mathcal{A}}(\xi)\big\vert\Big\Vert_{L_{p}}\lesssim_{\beta_{0}}\Big\Vert \textsf{E}_{\xi}\sup_{A\in \mathcal{A}}\big\vert \xi^\top  A\tilde{\xi}\big\vert \Big\Vert_{L_{p}}+\sup_{A\in\mathcal{A}}\big\Vert  \xi^\top  A\tilde{\xi}\big\Vert_{L_{p}},\nonumber
	\end{align}	
		which concludes the proof.
\end{proof}

\subsection{Proofs for Corollaries}

 \begin{proof}[Proof of Corollary \ref{Cor_logconcave}]
 	Let $\{\zeta_{1}, \cdots, \zeta_{n}\}\stackrel{\text{i.i.d}}{\sim}\mathcal{W}_{s}(\alpha)$. It follows from  the proof of Theorem \ref{Theo_logconcave_deviation}
 	\begin{align} 
 		\Big\Vert \sup_{A\in \mathcal{A}}\big\vert S_{\mathcal{A}}(\xi)-\textsf{E}S_{\mathcal{A}}(\xi)\big\vert\Big\Vert_{L_{p}}&\lesssim_{\alpha,L}\Big\Vert \sup_{A\in \mathcal{A}}\big\vert \zeta^\top  A\tilde{\zeta}\big\vert\Big\Vert_{L_{p}}\nonumber\\
 		&\lesssim_{\alpha,L}\Big\Vert \textsf{E}_{\zeta}\sup_{A\in \mathcal{A}}\big\vert \zeta^\top  A\tilde{\zeta}\big\vert \Big\Vert_{L_{p}}+\sup_{A\in\mathcal{A}}\big\Vert  \zeta^\top  A\tilde{\zeta}\big\Vert_{L_{p}},\nonumber
 	\end{align}
where $\tilde{\zeta}$ is an independent copy of $\zeta=(\zeta_{1}, \cdots, \zeta_{n})$.
 
 Note that,  Remark \ref{Rem_explanations} implies
 \begin{align}
 	\sup_{A\in\mathcal{A}}\big\Vert  \zeta^\top  A\tilde{\zeta}\big\Vert_{L_{p}}\lesssim_{\alpha} p^{1/\alpha} \sup_{A\in \mathcal{A}}\Vert  A\Vert_{F}+p^{2/\alpha} \sup_{A\in \mathcal{A}}\Vert  A\Vert_{l_{2}\to l_{2}}.\nonumber
 \end{align}

For the other part,  Lemma \ref{Lem_Comparison}  leads to that 
\begin{align}
	\Big\Vert \textsf{E}_{\eta}\sup_{A\in \mathcal{A}}\big\vert \zeta^\top  A\tilde{\zeta}\big\vert \Big\Vert_{L_{p}}\asymp\Big\Vert \textsf{E}_{\eta}\sup_{A\in \mathcal{A}}\big\vert \eta^\top  A\tilde{\eta}\big\vert \Big\Vert_{L_{p}},\nonumber
\end{align}
where $\eta, \tilde{\eta}$ are independent random vectors mentioned in conditions.

For convenience, let $f(x)=\textsf{E}_{\eta}\sup_{A\in \mathcal{A}}\big\vert \eta^\top  Ax\big\vert$, $x\in\mathbb{R}^{n}$. It is obvious for any $x, y\in \mathbb{R}^{n}, 1\le \alpha\le 2$ that
\begin{align}
	\vert f(x)-f(y)\vert\le \big(\textsf{E}\sup_{A\in \mathcal{A}}\Vert  A\eta\Vert_{\alpha^{*}}\big)\Vert x-y\Vert_{\alpha}\nonumber.
\end{align}
Hence, the classic concentration inequalities (see Theorem 4.19 in \cite{Ledoux_book}) yield for $t\ge 0$
\begin{align}
	\textsf{P}\{ f(\tilde{\eta})-\textsf{E}f(\tilde{\eta}) \ge t\}\le \exp\Big(  -c\min\Big\{\frac{t^{2}}{\big(\textsf{E}\sup_{A\in \mathcal{A}}\Vert  A\eta\Vert_{2}\big)^{2}}, \frac{t^{\alpha}}{\big(\textsf{E}\sup_{A\in \mathcal{A}}\Vert  A\eta\Vert_{\alpha^{*}}\big)^{\alpha}}  \Big\}\Big).\nonumber
\end{align}
Integration yields that
\begin{align}
	\Vert f(\tilde{\eta}) \Vert_{L_{p}} \lesssim \textsf{E}f(\tilde{\eta})+\sqrt{p}\textsf{E}\sup_{A\in \mathcal{A}}\Vert  A\eta\Vert_{2}+p^{1/\alpha}\textsf{E}\sup_{A\in \mathcal{A}}\Vert  A\eta\Vert_{\alpha^{*}}.\nonumber
\end{align}
Hence, we have
\begin{align}
	\Big\Vert \sup_{A\in \mathcal{A}}\big\vert S_{\mathcal{A}}(\xi)-\textsf{E}S_{\mathcal{A}}(\xi)\big\vert\Big\Vert_{L_{p}}\lesssim_{\alpha, L}& \textsf{E}\sup_{A\in \mathcal{A}}\big\vert \eta^\top  A\tilde{\eta}\big\vert+p^{2/\alpha} \sup_{A\in \mathcal{A}}\Vert  A\Vert_{l_{2}\to l_{2}}\nonumber\\
	&+\sqrt{p} \max\Big\{\textsf{E}\sup_{A\in \mathcal{A}}\Vert  A\eta\Vert_{2}, \sup_{A\in \mathcal{A}}\Vert A\Vert_{F}\Big\}\nonumber\\
	&+p^{1/\alpha}\textsf{E}\sup_{A\in \mathcal{A}}\Vert  A\eta\Vert_{\alpha^{*}}.\nonumber
\end{align}
With Lemma \ref{Lem_Moments}, we can obtain the corresponding deviation inequality.
 \end{proof}

 We next turn to prove Corollary \ref{Theo_deviation}. To this end, we need the following lemma.

\begin{mylem}\label{Theo_bound_decoupled}
	Let $\zeta_{1},\cdots,  \zeta_{n}\stackrel{\text{i.i.d}}{\sim}\mathcal{W}_{s}(\alpha)$, $1\le \alpha\le 2$. Then, we have for $p\ge 1$
			\begin{align}\label{Eq_Theo1.3_zong}
			\Big\Vert \sup_{A\in \mathcal{A}}\vert \zeta^\top A^\top A\tilde{\zeta}\vert\Big\Vert_{L_{p}}\lesssim_{\alpha}&\sup_{A\in \mathcal{A}}\Vert \zeta^\top A^\top A\tilde{\zeta}\Vert_{L_{p}}\nonumber\\
			&+\Big\Vert\sup_{A\in\mathcal{A}} \Vert A\tilde{\zeta}\Vert_{2}\Big\Vert_{L_{p}}\big(\gamma_{2}(\mathcal{A}, \Vert\cdot\Vert_{l_{2}\to l_{2}})+\gamma_{\alpha}(\mathcal{A}, \Vert\cdot\Vert_{l_{2}\to l_{\alpha^{*}}}) \big),
		\end{align}
	where $\zeta=(\zeta_{1},\cdots, \zeta_{n})^\top$ and $\tilde{\zeta}$ is an independent copy of $\zeta$. 
\end{mylem}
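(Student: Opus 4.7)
The strategy is to adapt the $L_p$-chaining argument used in the proof of Theorem~\ref{Theo_logconcave_deviation} to the process $\{X_A := \zeta^\top A^\top A \tilde{\zeta}\}_{A \in \mathcal{A}}$. I would first condition on $\tilde{\zeta}$: for fixed $\tilde{\zeta}$ the process $X_A$ is linear in $\zeta$ with coefficient vector $u_A := A^\top A\tilde{\zeta}$, so Lemma~\ref{Lem_alpha_2}(i) supplies the conditional increment bound
\[
\|X_A - X_B\|_{L_p(\zeta\mid\tilde{\zeta})} \lesssim_{\alpha} \sqrt{p}\|u_A - u_B\|_2 + p^{1/\alpha}\|u_A - u_B\|_{\alpha^*}.
\]
I would fix an admissible sequence $(\mathcal{A}_n)_{n \ge 0}$ of $\mathcal{A}$ realizing (up to absolute constants) both $\gamma_2(\mathcal{A},\|\cdot\|_{l_2\to l_2})$ and $\gamma_\alpha(\mathcal{A},\|\cdot\|_{l_2\to l_{\alpha^*}})$—possible by merging and re-indexing two admissible sequences—pick $l$ with $2^l \asymp p$, and split
\[
\sup_A |X_A| \le \sup_{B \in \mathcal{A}_l} |X_B| + \sup_A |X_A - X_{\pi_l(A)}|.
\]
Since $|\mathcal{A}_l|^{1/p} \lesssim 1$, the first summand is bounded by $\sup_A \|X_A\|_{L_p}$, the first term on the right-hand side of the claim.

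For the chain I would telescope $X_A - X_{\pi_l(A)} = \sum_{n \ge l}(X_{\pi_{n+1}(A)} - X_{\pi_n(A)})$ and apply the polarization identity
\[
\zeta^\top B^\top B \tilde{\zeta} - \zeta^\top C^\top C \tilde{\zeta} = \langle (B-C)\zeta, C\tilde{\zeta}\rangle + \langle B\zeta, (B-C)\tilde{\zeta}\rangle
\]
at each level with $B = \pi_{n+1}(A)$, $C = \pi_n(A)$. Pointwise Cauchy--Schwarz bounds each increment by $\sigma(\tilde{\zeta})\|(B-C)\zeta\|_2 + \sigma(\zeta)\|(B-C)\tilde{\zeta}\|_2$, where $\sigma(\tilde{\zeta}) := \sup_{A \in \mathcal{A}}\|A\tilde{\zeta}\|_2$. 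Taking $\sup_A$ through and exploiting the independence of $\zeta$ and $\tilde{\zeta}$ to factor $\|\sigma(\zeta)\,S(\tilde{\zeta})\|_{L_p} = \|\sigma\|_{L_p}\|S\|_{L_p}$, together with the $\zeta \leftrightarrow \tilde{\zeta}$ symmetry, reduces the chain contribution to estimating $\big\|\sum_{n\ge l}\sup_A\|M_n^A \tilde{\zeta}\|_2\big\|_{L_p}$ with $M_n^A := \pi_{n+1}(A) - \pi_n(A)$. For each $n$, Chebyshev at exponent $2^n$ combined with an $L_{2^n}$-moment bound for $\|M \tilde{\zeta}\|_2$ (obtained from Lemma~\ref{Lem_alpha_2}(i) by writing $\|M\tilde{\zeta}\|_2 = \sup_{\|y\|_2\le 1}\langle y, M\tilde{\zeta}\rangle$ and discretizing the unit ball) and a union bound over the $|\mathcal{A}_{n+1}||\mathcal{A}_n| \le 4^{2^{n+1}}$ pairs produces diameter terms proportional to $\sqrt{2^n}\,\Delta_n^{l_2 \to l_2} + 2^{n/\alpha}\,\Delta_n^{l_2 \to l_{\alpha^*}}$; summation over $n \ge l$ yields exactly $\gamma_2(\mathcal{A},\|\cdot\|_{l_2 \to l_2}) + \gamma_\alpha(\mathcal{A},\|\cdot\|_{l_2 \to l_{\alpha^*}})$.

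The main obstacle is disposing of the Frobenius component $\|M_n^A\|_F$ that unavoidably appears in the $L_{2^n}$ estimate of $\|M_n^A \tilde{\zeta}\|_2$, since it does not fit the $\gamma_2+\gamma_\alpha$ structure of the claim. I would handle it by a scale-switching argument: at levels where the trivial bound $\|M_n^A \tilde{\zeta}\|_2 \le \|\pi_{n+1}(A)\tilde{\zeta}\|_2 + \|\pi_n(A)\tilde{\zeta}\|_2 \le 2\sigma(\tilde{\zeta})$ is sharper, use it and absorb the contribution into the multiplicative factor $\|\sigma\|_{L_p}$; at the finer levels the $\sqrt{2^n}$- and $2^{n/\alpha}$-terms dominate and aggregate into $\gamma_2$ and $\gamma_\alpha$ after summation. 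Combining this chain estimate with the base-level bound from the first paragraph yields the stated inequality.
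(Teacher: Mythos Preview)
Your instinct in the opening paragraph---conditioning on $\tilde{\zeta}$ so that $X_A$ is linear in $\zeta$ with coefficient vector $u_A=A^\top A\tilde{\zeta}$---is exactly right, and is what the paper does throughout. The gap is that you then abandon it in favour of pointwise Cauchy--Schwarz, which is too lossy: it leaves you needing $\bigl\|\sum_{n\ge l}\sup_A\|M_n^A\tilde{\zeta}\|_2\bigr\|_{L_p}\lesssim\gamma_2+\gamma_\alpha$, and this is false. Take $\mathcal{A}=\{n^{-1/2}I_n,\,0\}$ with $p$ bounded: the single nonzero increment is $M=n^{-1/2}I_n$, so $\bigl\|\|M\tilde{\zeta}\|_2\bigr\|_{L_p}\asymp 1$ while $\gamma_2+\gamma_\alpha\asymp\|M\|_{l_2\to l_2}=n^{-1/2}$. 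Your scale-switching fix does not help either: replacing $\|M\tilde{\zeta}\|_2$ by the trivial bound $2\sigma(\tilde{\zeta})$ contributes $\|\sigma\|_{L_p}\cdot 2\|\sigma\|_{L_p}\asymp 1$ to the chain, again off by $\sqrt{n}$. The underlying obstruction is that $\textsf{E}\|M\tilde{\zeta}\|_2\asymp\|M\|_F$, and the Frobenius norm of a difference is not controlled by its operator norms. Discretising the unit sphere does not evade this: the net has cardinality exponential in the ambient dimension, so the union bound is only affordable at levels $2^r\gtrsim m$, whereas the Frobenius contribution sits precisely at the low levels.

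The paper stays with the conditional-linear viewpoint and arranges the factorisation so that the difference enters only through operator norms. Writing $\pi_{r+1}(A)^\top\pi_{r+1}(A)-\pi_r(A)^\top\pi_r(A)=\Lambda_{r+1}(A)^\top\pi_{r+1}(A)+\pi_r(A)^\top\Lambda_{r+1}(A)$, it treats $\zeta^\top\Lambda_{r+1}(A)^\top\pi_{r+1}(A)\tilde{\zeta}$ as linear in $\zeta$ with coefficient vector $S_{r+1}=\Lambda_{r+1}(A)^\top\pi_{r+1}(A)\tilde{\zeta}$ and observes
\[
\|S_{r+1}\|_2\le\|\Lambda_{r+1}(A)\|_{l_2\to l_2}\,\sigma(\tilde{\zeta}),\qquad \|S_{r+1}\|_{\alpha^*}\le\|\Lambda_{r+1}(A)\|_{l_2\to l_{\alpha^*}}\,\sigma(\tilde{\zeta}).
\]
The point is that the random vector is hit by $\pi_{r+1}(A)\in\mathcal{A}$ and absorbed into $\sigma(\tilde{\zeta})$, while the difference $\Lambda_{r+1}(A)$ contributes only its operator norms---exactly the asymmetry that Cauchy--Schwarz destroys. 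Lemma~\ref{Lem_alpha_2}(i) and Lemma~\ref{Lem_Moments} then give a conditional tail bound; a union bound over the at most $2^{2^{r+3}}$ pairs and summation over $r\ge l$ yield $\sigma(\tilde{\zeta})\bigl(\gamma_2+\gamma_\alpha\bigr)$ directly, with no Frobenius residue. The companion piece $\zeta^\top\pi_r(A)^\top\Lambda_{r+1}(A)\tilde{\zeta}$ is handled symmetrically by conditioning on $\zeta$ instead. Your treatment of the base term and the merging of two admissible sequences into one are correct and match the paper.
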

\begin{proof}
	We shall use Talagrand's chaining argument to bound the decoupled chaos. For convienence, we denote by $d_{\alpha^{*}}$ the distance on $\mathcal{A}$ induced by $\Vert\cdot\Vert_{l_{2}\to l_{\alpha^{*}}}$. Select two admissible sequences of partitions $\mathcal{T}^{(1)}=(\mathcal{T}^{(1)}_{n})_{n\ge 0}$ and $\mathcal{T}^{(2)}=(\mathcal{T}^{(2)}_{n})_{n\ge 0}$ of $\mathcal{A}$ such that
	\begin{align}
		\sup_{A\in \mathcal{A}}\sum_{n\ge 0}2^{n/2}\Delta_{d_{2}}(T_{n}^{(1)}(A))\le 2\gamma_{2}^{\prime}(\mathcal{A}, d_{2})\nonumber
	\end{align}
	and 
	\begin{align}
		\sup_{A\in \mathcal{A}}\sum_{n\ge 0}2^{n/\alpha}\Delta_{d_{\alpha^{*}}}(T_{n}^{(2)}(A))\le 2\gamma_{\alpha}^{\prime}(\mathcal{A}, d_{\alpha^{*}}).\nonumber
	\end{align}
	Let $\mathcal{T}_{0}=\{\mathcal{A}\}$ and 
	\begin{align}
		\mathcal{T}_{n}=\{T^{(1)}\cap T^{(2)}:  T^{(1)}\in \mathcal{T}^{(1)}_{n-1}, T^{(2)}\in \mathcal{T}^{(2)}_{n-1}   \},\quad n\ge 1\nonumber.
	\end{align}
	Then $\mathcal{T}=(\mathcal{T}_{n})_{n\ge 0}$ is increasing and 
	\begin{align}
		\vert \mathcal{T}_{n}\vert\le \vert \mathcal{T}^{(1)}_{n-1}\vert\vert \mathcal{T}^{(2)}_{n-1}\vert\le 2^{2^{n-1}}2^{2^{n-1}}=2^{2^{n}}.\nonumber
	\end{align}
	We define a subset $\mathcal{A}_{n}$ of $\mathcal{A}$ by selecting exactly one point from each $T\in \mathcal{T}_{n}$. In this way, we obtain an admissible sequence $(\mathcal{A}_{n})_{n\ge 0}$ of subsets of $\mathcal{A}$. Let $\pi=\{ \pi_{r}, r\ge 0\}$ be a sequence of functions $\pi_{n}: \mathcal{A}\to \mathcal{A}_{n}$ such that $\pi_{n}(A)=\mathcal{A}_{n}\cap T_{n}(A)$, where $T_{n}(A)$ is the element of $\mathcal{T}_{n}$ containing $A$. Let $l$ be the largest integer such that $2^{l}\le p$.
	
	We make the following decomposition
	\begin{align}\label{Eq_decomposition_1}
		\big\Vert \sup_{A\in \mathcal{A}}\vert \zeta^\top A^\top A\tilde{\zeta}\vert\big\Vert_{L_{p}}\le&\Big\Vert \sup_{A\in \mathcal{A}}\big\vert \zeta^\top A^\top A\tilde{\zeta}-\zeta^\top \pi_{l}(A)^\top\pi_{l}(A)\tilde{\zeta}\big\vert\Big\Vert_{L_{p}}\nonumber\\
		&+\Big\Vert \sup_{A\in \mathcal{A}}\big\vert \zeta^\top \pi_{l}(A)^\top\pi_{l}(A)\tilde{\zeta}\big\vert\Big\Vert_{L_{p}}.
	\end{align}
	
	Fixing $A\in\mathcal{A}$, we have
	\begin{align}\label{Eq_decomposition_2}
		\big\vert \zeta^\top A^\top A\tilde{\zeta}-\zeta^\top \pi_{l}(A)^\top\pi_{l}(A)\tilde{\zeta}\big\vert\le& \sum_{r\ge l}\big\vert\zeta^\top \Lambda_{r+1}(A)^\top\pi_{r+1}(A)\tilde{\zeta}\big\vert\nonumber\\
		&+\sum_{r\ge l}\big\vert\zeta^\top \pi_{r}(A)^\top\Lambda_{r+1}(A)\tilde{\zeta}\big\vert,
	\end{align}
	where $\Lambda_{r+1}A=\pi_{r+1}(A)-\pi_{r}(A)$.
	
	Conditionally on $\tilde{\zeta}$, Lemma \ref{Lem_alpha_2} yields that for $p\ge 2$
	\begin{align}
		\big\Vert \zeta^\top \Lambda_{r+1}(A)^\top\pi_{r+1}(A)\tilde{\zeta}\big\Vert_{L_{p}}\lesssim_{\alpha}&p^{1/2}\big\Vert\Lambda_{r+1}(A)^\top\pi_{r+1}(A)\tilde{\zeta} \big\Vert_{2}\nonumber\\
		&+p^{1/\alpha}\big\Vert\Lambda_{r+1}(A)^\top\pi_{r+1}(A)\tilde{\zeta} \big\Vert_{\alpha^{*}},\nonumber
	\end{align}
	implying with Lemma \ref{Lem_Moments}
	\begin{align}
		\textsf{P}_{\zeta}\Big\{ \vert \zeta^\top S_{r+1}(A, \tilde{\zeta})\vert\ge C(\alpha)\big(\sqrt{t} \Vert S_{r+1}(A, \tilde{\zeta}) \Vert_{2}+t^{1/\alpha} \Vert S_{r+1}(A, \tilde{\zeta}) \Vert_{\alpha^{*}}\big)\Big\}\le e^{2}e^{-t},\nonumber
	\end{align}
	where $S_{r+1}(A, \tilde{\zeta})=\Lambda_{r+1}(A)^\top\pi_{r+1}(A)\tilde{\zeta}$. Note that
	\begin{align}
		\Vert S_{r+1}(A, \tilde{\zeta}) \Vert_{2}\le \Vert\Lambda_{r+1}(A)\Vert_{l_{2}\to l_{2}}\sup_{A\in\mathcal{A}} \Vert A\tilde{\zeta}\Vert_{2}\nonumber
	\end{align}
	and 
	\begin{align}
		\Vert S_{r+1}(A, \tilde{\zeta}) \Vert_{\alpha^{*}}\le \Vert\Lambda_{r+1}(A)\Vert_{l_{2}\to l_{\alpha^{*}}}\sup_{A\in\mathcal{A}} \Vert A\tilde{\zeta}\Vert_{2}.\nonumber
	\end{align}
	Recall that $\big\vert\{ \pi_{r}(A): A\in\mathcal{A} \}\big\vert\le \vert \mathcal{A}_{r}\vert\le 2^{2^{r}}$. Hence, $$\big\vert\{ \Lambda_{r+1}(A)^\top\pi_{r+1}(A): A\in \mathcal{A} \}\big\vert\le 2^{2^{r+3}}.\nonumber$$
	Let the event $\Omega_{t, p}$ be
	\begin{align}
		\bigcap_{r\ge l}\bigcap_{A\in\mathcal{A}}\Big\{ &\vert\zeta^\top S_{r+1}(A, \tilde{\zeta}) \vert\nonumber\\
		&\le C(\alpha)\sup_{A\in\mathcal{A}} \Vert A\tilde{\zeta}\Vert_{2}\big(\sqrt{t}2^{\frac{r}{2}}\Vert\Lambda_{r+1}(A)\Vert_{l_{2}\to l_{2}}+t^{\frac{1}{\alpha}}2^{\frac{r}{\alpha}}\Vert\Lambda_{r+1}(A)\Vert_{l_{2}\to l_{\alpha^{*}}}\big)\Big\}. \nonumber
	\end{align}
	By a union bound, we have for $t>16$
	\begin{align}
		\textsf{P}_{\zeta}\{ \Omega_{t, p}^{c} \}\le \sum_{r\ge l}2^{2^{r+3}}e^{2}e^{-2^{r}t}\le C_{1}(\alpha)\exp\big( -c(\alpha)pt \big).\nonumber
	\end{align}
	If the event $\Omega_{t, p}$ occurs, then
	\begin{align}
		&\sum_{r\ge l}\big\vert\zeta^\top S_{r+1}(A, \tilde{\zeta}) \big\vert\nonumber\\
		\le& C(\alpha)\sup_{A\in\mathcal{A}} \Vert A\tilde{\zeta}\Vert_{2}\Big(\sum_{r\ge l}\sqrt{t}2^{\frac{r}{2}}\Vert \Lambda_{r+1}A\Vert_{l_{2}\to l_{2}}+\sum_{r\ge l}t^{\frac{1}{\alpha}}2^{\frac{r}{\alpha}}\Vert\Lambda_{r+1}A\Vert_{l_{2}\to l_{\alpha^{*}}}\Big).\nonumber
	\end{align}
	Note that we have $\pi_{r+1}(A), \pi_{r}(A)\in T_{r}(A)\subset T^{(1)}_{r-1}(A)$ and so
	\begin{align}
		\Vert \Lambda_{r+1}A\Vert_{l_{2}\to l_{2}}\le \Delta_{d_{2}}\big(T^{(1)}_{r-1}(A)\big)
	\end{align}
	Hence, by our choice of $\mathcal{T}^{(1)}$,
	\begin{align}
		\sum_{r\ge l}2^{\frac{r}{2}}\Vert \Lambda_{r+1}A\Vert_{l_{2}\to l_{2}}\le \sum_{r\ge l}2^{\frac{r}{2}}\Delta_{d_{2}}\big(T^{(1)}_{r-1}(A)\big)\le 2\sqrt{2}\gamma_{2}^{\prime}(\mathcal{A}, d_{2}).\nonumber
	\end{align}
	Analogously, by our choice of $\mathcal{T}^{(2)}$,
	\begin{align}
		\sum_{r\ge l}2^{\frac{r}{\alpha}}\Vert\Lambda_{r+1}A\Vert_{l_{2}\to l_{\alpha^{*}}}\le 2\cdot 2^{\frac{1}{\alpha}}\gamma_{\alpha}^{\prime}(\mathcal{A}, d_{\alpha^{*}}).\nonumber
	\end{align}
	Thus,
	\begin{align}
		\sup_{A\in \mathcal{A}}\sum_{r\ge l}\vert\zeta^\top S_{r+1}(A, \tilde{\zeta}) \vert\le 4C(\alpha)\sup_{A\in\mathcal{A}}\Vert A\tilde{\zeta}\Vert_{2}\big( \sqrt{t}\gamma_{2}^{\prime}(\mathcal{A}, d_{2})+t^{\frac{1}{\alpha}}    \gamma_{\alpha}^{\prime}(\mathcal{A}, d_{\alpha^{*}})\big).\nonumber
	\end{align}
	As a consequence, we obtain for $t>16$
	\begin{align}
		&\textsf{P}\Big\{ \sup_{A\in \mathcal{A}}\sum_{r\ge l}\vert\zeta^\top S_{r+1}(A, \tilde{\zeta}) \vert> C_{2}(\alpha)\sup_{A\in\mathcal{A}}\Vert A\tilde{\zeta}\Vert_{2}t\big( \gamma_{2}^{\prime}(\mathcal{A}, d_{2})+    \gamma_{\alpha}^{\prime}(\mathcal{A}, d_{\alpha^{*}})\big)   \Big\}\nonumber\\
		\le &C_{1}(\alpha)\exp(-c(\alpha)pt).\nonumber
	\end{align}
	A direct integration (Lemma A.5 in \cite{Dirksen_EJP} provides a detailed calculation) and taking expectation with respect to variables $\{ \tilde{\eta}_{i}, i\le n\}$ yield for $p\ge 1$
	\begin{align}
		\Big\Vert\sup_{A\in \mathcal{A}}\sum_{r\ge l}\vert\zeta^\top S_{r+1}(A, \tilde{\zeta}) \vert\Big\Vert_{L_{p}}\lesssim_{\alpha}\Big\Vert\sup_{A\in\mathcal{A}} \Vert A\tilde{\zeta}\Vert_{2}\Big\Vert_{L_{p}}\big( \gamma_{2}(\mathcal{A}, d_{2})+\gamma_{\alpha}(\mathcal{A}, d_{\alpha^{*}})\big)\nonumber.
	\end{align}
	Following the same line, one can give a similar bound for 
	$$\sup_{A\in \mathcal{A}}\sum_{r\ge l}\vert\zeta^\top\pi_{r}(A)^\top \Lambda_{r+1}(A)\tilde{\zeta} \vert.$$
	By virtue of (\ref{Eq_decomposition_2}), we have for $p\ge 1$
	\begin{align}
		&\Big\Vert \sup_{A\in \mathcal{A}}\vert \zeta^\top A^\top A\tilde{\zeta}-\eta^\top \pi_{l}(A)^\top\pi_{l}(A)\tilde{\zeta}\vert\Big\Vert_{L_{p}}\nonumber\\
		\lesssim_{\alpha}&\Big\Vert\sup_{A\in\mathcal{A}} \Vert A\tilde{\zeta}\Vert_{2}\Big\Vert_{L_{p}}\big( \gamma_{2}(\mathcal{A}, d_{2})+\gamma_{\alpha}(\mathcal{A}, d_{\alpha^{*}})\big).\nonumber
	\end{align}
	
	As for the other part of (\ref{Eq_decomposition_1}), we have for $p\ge 1$
	\begin{align}
		\textsf{E}\sup_{A\in \mathcal{A}}\vert \zeta^\top \pi_{l}(A)^\top\pi_{l}(A)\tilde{\eta}\vert^{p}&\le \sum_{B\in T_{l}}	\textsf{E}\vert \zeta^\top B^\top B\tilde{\zeta}\vert^{p}\nonumber\\
		&\le e^{p}\sup_{A\in \mathcal{A}}\textsf{E}\vert \zeta^\top A^\top A\tilde{\zeta}\vert^{p}.\nonumber
	\end{align}
	Thus
	\begin{align}
		\Big\Vert\sup_{A\in \mathcal{A}}\vert \zeta^\top \pi_{l}(A)^\top\pi_{l}(A)\tilde{\zeta}\vert\Big\Vert_{L_{p}}\le e\sup_{A\in \mathcal{A}}\Vert \zeta^\top A^\top A\tilde{\zeta}\Vert_{L_{p}}.\nonumber
	\end{align}
	Then, we conclude the proof by virtue of (\ref{Eq_decomposition_1}). 
\end{proof}

\begin{proof}[Proof of Corollary \ref{Theo_deviation}]

	Let $\zeta_{1},\cdots,  \zeta_{n}\stackrel{\text{i.i.d}}{\sim}\mathcal{W}_{s}(\alpha)$.
	 Lemmas \ref{Lem_Comparison} and  \ref{Theo_bound_decoupled} yield that the quantity $\textsf{E}\sup_{A\in \mathcal{A}}\vert \eta^\top A^\top A\tilde{\eta}\vert$ is further bounded by
\begin{align}\label{Eq_onepart}
	\sup_{A\in \mathcal{A}}\textsf{E}\vert \zeta^\top A^\top A\tilde{\zeta}\vert+\big(\textsf{E}\sup_{A\in\mathcal{A}} \Vert A\tilde{\zeta}\Vert_{2}\big)\big(\gamma_{2}(\mathcal{A}, \Vert\cdot\Vert_{l_{2}\to l_{2}})+\gamma_{\alpha}(\mathcal{A}, \Vert\cdot\Vert_{l_{2}\to l_{\alpha^{*}}}) \big).
\end{align}
 We have by  Remark \ref{Rem_explanations},
\begin{align}\label{Eq_Theoproof1.2_1}
	\sup_{A\in \mathcal{A}}\textsf{E}\vert \zeta^\top A^\top A\tilde{\zeta}\vert\lesssim_{\alpha} \sup_{A\in \mathcal{A}}\Vert A^\top A\Vert_{F}\le M_{F}(\mathcal{A})M_{l_{2}\to l_{2}}(\mathcal{A}),
\end{align}
where $M_{l_{2}\to l_{2}}(\mathcal{A})=\sup_{A\in \mathcal{A}}\Vert A\Vert_{l_{2}\to l_{2}}$ and $M_{F}(\mathcal{A})=\sup_{A\in \mathcal{A}}\Vert A\Vert_{F}$ as defined before.

Observe that by Proposition \ref{Prop_decoupling} and Lemma \ref{Theo_bound_decoupled}
\begin{align}
	\textsf{E}\sup_{A\in \mathcal{A}}\Vert A\tilde{\zeta}\Vert_{2}^{2}&\le \textsf{E} \sup_{A\in\mathcal{A}}\big\vert \Vert A\tilde{\zeta}\Vert_{2}^{2}-\textsf{E}\Vert A\tilde{\zeta}\Vert_{2}^{2}\big\vert  +\sup_{A\in \mathcal{A}}\textsf{E}\Vert A\tilde{\zeta}\Vert_{2}^{2}\nonumber\\
&	\lesssim_{\alpha}\textsf{E}\sup_{A\in \mathcal{A}}\vert \zeta^\top A^\top A\tilde{\zeta}\vert+M^{2}_{F}(\mathcal{A})\nonumber\\
&\lesssim_{\alpha}\textsf{E}\sup_{A\in \mathcal{A}}\Vert A\tilde{\zeta}\Vert_{2}(\gamma_{2}(\mathcal{A}, \Vert\cdot\Vert_{l_{2}\to l_{2}})+\gamma_{\alpha}(\mathcal{A}, \Vert\cdot\Vert_{l_{2}\to l_{\alpha^{*}}}))+M^{2}_{F}(\mathcal{A}).\nonumber
\end{align}
Therefore, 
\begin{align}
	\textsf{E}\sup_{A\in \mathcal{A}}\Vert A\tilde{\zeta}\Vert_{2}\le \Big\Vert  \sup_{A\in \mathcal{A}}\Vert A\tilde{\zeta}\Vert_{2}\Big\Vert_{L_{2}}\lesssim_{\alpha}\Gamma(\alpha, \mathcal{A})+M_{F}(\mathcal{A}),\nonumber
\end{align}
implying $\textsf{E}\sup_{A\in \mathcal{A}}\vert \eta^\top A^\top A\tilde{\eta}\vert$ is further bounded by
\begin{align}\label{Eq_secondpart}
	M_{F}(\mathcal{A})M_{l_{2}\to l_{2}}(\mathcal{A})
	+\Gamma(\alpha, \mathcal{A})\big(\Gamma(\alpha, \mathcal{A})+M_{F}(\mathcal{A})\big),
\end{align}
where $\Gamma(\alpha, \mathcal{A})=\gamma_{2}(\mathcal{A}, \Vert\cdot\Vert_{l_{2}\to l_{2}})+\gamma_{\alpha}(\mathcal{A}, \Vert\cdot\Vert_{l_{2}\to l_{\alpha^{*}}})$ as defined before.

As for $\textsf{E}\sup_{A\in\mathcal{A}}\Vert A^\top A\eta\Vert_{\alpha^{*}}$, we note that
\begin{align}
	\textsf{E}\sup_{A\in\mathcal{A}}\Vert A^\top A\eta\Vert_{\alpha^{*}}\le M_{l_{2}\to l_{\alpha^{*}}}(\mathcal{A})\textsf{E}\sup_{A\in\mathcal{A}}\Vert  A\eta\Vert_{2}.\nonumber
\end{align}
Hence, we have by Lemma \ref{Lem_Comparison}
\begin{align}
\textsf{E}\sup_{A\in\mathcal{A}}\Vert A^\top A\eta\Vert_{\alpha^{*}}\lesssim_{\alpha} M_{l_{2}\to l_{\alpha^{*}}}(\Gamma(\alpha, \mathcal{A})+M_{F}(\mathcal{A})).\nonumber
\end{align}
Then, we have by Corollary \ref{Cor_logconcave}
\begin{align}
	\Big\Vert \sup_{A\in \mathcal{A}}\big\vert \Vert A\xi\Vert_{2}^{2}-\textsf{E}\Vert A\xi\Vert_{2}^{2} \big\vert\Big\Vert_{L_{p}}\lesssim_{\alpha}&\Gamma(\alpha, \mathcal{A})\Big(\Gamma(\alpha, \mathcal{A})+M_{F}(\mathcal{A})\Big)+p^{2/\alpha}M^{2}_{l_{2}\to l_{2}}(\mathcal{A})\nonumber\\
	&+p^{1/\alpha}M_{l_{2}\to l_{\alpha^{*}}}(\mathcal{A})\big(\Gamma(\alpha, \mathcal{A})+M_{F}(\mathcal{A}) \big)\nonumber\\
	&+\sqrt{p}M_{l_{2}\to l_{2}}(\mathcal{A})\big(\Gamma(\alpha, \mathcal{A})+M_{F}(\mathcal{A}) \big).\nonumber
\end{align}
The deviation inequality follows from  Lemma \ref{Lem_Moments}.
\end{proof}

\subsection{R.I.P. of Partial Random Circulant Matrices}
We adopt the notations of Introduction throughout this subsection. Let $1\le \alpha\le 2$, $\eta=(\eta_{1},\cdots, \eta_{n})\in\mathbb{R}^{n}$ be a standard $\alpha$-subexponential  vector. Let $\Phi \in\mathbb{R}^{m\times n}$ be a partial random circulant matrix generated by $\eta$. In this subsection, we shall prove the R.I.P. of $\Phi$.

Let $\Omega\subset\{1,\cdots, n\}$ be a fixed set with $\vert \Omega\vert=m$ and $P_{\Omega}=R_{\Omega}^\top R_{\Omega}$, where $R_{\Omega}: \mathbb{R}^{n}\to \mathbb{R}^{m}$ is the operator restricting a vector $x\in \mathbb{R}^{n}$ to its entries in $\Omega$.  In particular, $P_{\Omega}: \mathbb{R}^{n}\to \mathbb{R}^{n}$, is a projection operator satisfying for $x\in \mathbb{R}^{n}$
\begin{align}
	(P_{\Omega}x)_{l}=x_{l},\,\, l\in \Omega;\quad (P_{\Omega}x)_{l}=0,\,\, l\notin \Omega.\nonumber
\end{align}

Setting $D_{s, n}=\{ x\in \mathbb{R}^{n}: \Vert x\Vert_{2}\le 1, \Vert x\Vert_{0}\le s \}$ and recalling (\ref{Eqisometry}), the restricted isometry constant of $\Phi$ is 
\begin{align}\label{Eq_RIP_chaos}
	\delta_{s}&=\sup_{x\in D_{s, n}}\Big\vert \frac{1}{m}\Vert \Phi x\Vert_{2}^{2}-\Vert x\Vert_{2}^{2}\Big\vert=\sup_{x\in D_{s, n}}\Big\vert \frac{1}{m}\Vert P_{\Omega}(\eta *x)\Vert_{2}^{2}-\Vert x\Vert_{2}^{2}\Big\vert\nonumber\\
	&=\sup_{x\in D_{s, n}}\Big\vert \frac{1}{m}\Vert P_{\Omega}(x*\eta)\Vert_{2}^{2}-\Vert x\Vert_{2}^{2}\Big\vert=\sup_{x\in D_{s, n}}\Big\vert\Vert V_{x}\eta\Vert_{2}^{2}-\Vert x\Vert_{2}^{2}\Big\vert	\nonumber\\
	&=\sup_{x\in D_{s, n}}\Big\vert\Vert V_{x}\eta\Vert_{2}^{2}-\textsf{E}\Vert V_{x}\eta\Vert_{2}^{2}\Big\vert,
\end{align}
where the last equality we use the fact $\textsf{E}\Vert V_{x}\eta\Vert_{2}^{2}=\Vert x\Vert_{2}^{2}$.

\begin{proof}[Proof of Theorem \ref{Theo_RIP}]
	Set $\mathcal{A}=\{ V_{x}: x\in D_{s, n}\}$. By virtue of Corollary \ref{Theo_deviation}, we only need to control the quantities $M_{F}(\mathcal{A}), M_{l_{2}\to l_{\alpha^{*}}}(\mathcal{A})$ and $ \gamma_{\alpha}(\mathcal{A}, \Vert\cdot\Vert_{l_{2}\to l_{\alpha^{*}}})$. 
	
	Note that the matrices $V_{x}$ consist of shifted copies of $x$ in all their $m$ nonzero rows. Hence, the $l_{2}$-norm of each nonzero row is $m^{-1/2}\Vert x\Vert_{2}$. Then, we have for $x\in D_{s, n}$, $\Vert V_{x}\Vert_{F}=\Vert x\Vert_{2}\le 1$, and thus $M_{F}(\mathcal{A})=1$.
	
	Note that
	\begin{align}
		\Vert V_{x}\Vert_{l_{2}\to l_{\alpha^{*}}}&=\sup_{\Vert y\Vert_{2},\Vert z\Vert_{\alpha}\le 1}\vert y^\top V_{x}z\vert\le \sup_{\Vert y\Vert_{2},\Vert z\Vert_{\alpha}\le 1}\frac{1}{\sqrt{m}}\vert y^\top (x*z)\vert\nonumber\\
		&\le \frac{1}{\sqrt{m}}\Vert x\Vert_{1}\le \sqrt{\frac{s}{m}}\Vert x\Vert_{2}\le \sqrt{\frac{s}{m}},\nonumber
	\end{align}
It follows immediately  from that $M_{l_{2}\to l_{\alpha^{*}}}(\mathcal{A})\le \sqrt{s/m}$.

 Lastly, we use the covering number to bound $ \gamma_{\alpha}(\mathcal{A}, \Vert\cdot\Vert_{l_{2}\to l_{\alpha^{*}}})$, see (\ref{Eq_gamma_covering}). Observe that Krahmer et al \cite{Rauhut_CPAM} proved the following bounds for $u\ge 1/\sqrt{m}$
 \begin{align}
 	\log N(\mathcal{A}, \Vert\cdot\Vert_{l_{2}\to l_{2}}, u)\lesssim\frac{s}{m}(\frac{\log n}{u})^{2}\nonumber
 \end{align}
and for $u\le 1/\sqrt{m}$
\begin{align}
	\log N(\mathcal{A}, \Vert\cdot\Vert_{l_{2}\to l_{2}}, u)\lesssim s\log(\frac{en}{su}).\nonumber
\end{align}
A direct calculation (see Section 5.2 for details) yields that for $1\le\alpha<2$
\begin{align}
	 \gamma_{\alpha}(\mathcal{A}, \Vert\cdot\Vert_{l_{2}\to l_{\alpha^{*}}})\le\gamma_{\alpha}(\mathcal{A}, \Vert\cdot\Vert_{l_{2}\to l_{2}})\lesssim_{\alpha}\frac{s^{1/\alpha}}{\sqrt{m}}\log^{2/\alpha} n\nonumber
\end{align}
and for $\alpha=2$
\begin{align}
	\gamma_{2}(\mathcal{A}, \Vert\cdot\Vert_{l_{2}\to l_{2}})\lesssim_{\alpha}\sqrt{\frac{s}{m}}\log s\log n.\nonumber 
\end{align}

 For any $0<\delta<1$, set 
\begin{align}
	m\ge c_{1}(\alpha, L)\delta^{-2}\max\big\{(s^{2/\alpha}\log^{4/\alpha} n),     (s\log^{2}s\log^{2}n)\big\}.\nonumber
\end{align}
Then, it follows
\begin{align}
	\gamma_{2}(\mathcal{A}, \Vert\cdot\Vert_{l_{2}\to l_{2}}), \gamma_{\alpha}(\mathcal{A}, \Vert\cdot\Vert_{l_{2}\to l_{\alpha^{*}}})\lesssim_{\alpha, L}\delta.\nonumber
\end{align}
Choosing the constant $c_{1}(\alpha, L)$ appropriately leads to  $C(\alpha)L^{2}U_{1}(\alpha)\le\delta/2$, where $C(\alpha)$ is the constant in Corollary \ref{Theo_deviation}. Then, we have
\begin{align}
	\textsf{P}\{ \delta_{s}\ge \delta \}&\le \textsf{P}\{ \delta_{s}\ge C(\alpha, L)U_{1}(\alpha)+\delta/2 \}\nonumber\\ 
	&\le e\exp(-c_{2}(\alpha, L)(m/s)^{\alpha/2}\delta^{\alpha}),\nonumber
\end{align}
which concludes the proof.
\end{proof}

\subsection{R.I.P. of Time-Frequency Structured Random Matrices} Recall the notations appearing in Introduction and  observe that for $x\in \mathbb{C}^{m^{2}}$, $\Psi_{h}x=V_{x}\xi$, where 
\begin{align}
	V_{x}=\frac{1}{\sqrt{m}}\sum_{\lambda\in\{0,\cdots,  m-1\}^{2}}x_{\lambda}\boldsymbol{\pi}(\lambda).\nonumber
\end{align}
Note that $\{\frac{1}{\sqrt{m}}\boldsymbol{\pi}(\lambda): \lambda\in \{0,\cdots, m-1  \}^{2}  \}$ is an orthonormal system in the $m\times m$ complex matrices equipped with the Frobenius norm. Hence, we have
\begin{align}
	\textsf{E}\Vert V_{x}\eta\Vert_{2}^{2}=\Vert V_{x}\Vert^{2}_{F}=\Big\Vert m^{-1/2}\sum_{\lambda\in\{0,\cdots,  m-1\}^{2}}x_{\lambda}\boldsymbol{\pi}(\lambda) \Big\Vert_{F}^{2}=\Vert x\Vert_{2}^{2}.\nonumber
\end{align}
Let $D_{s, n}=\{x\in\mathbb{C}^{m^{2}}: \Vert x\Vert_{2}\le 1, \Vert x\Vert_{0}\le s  \}$ and $\mathcal{A}=\{ V_{x}: x\in D_{s, n} \}$, then the restricted isometry constant is 
\begin{align}\label{Eq_chaos_RIP_2}
	\delta_{s}=\sup_{x\in D_{s, n}}\Big\vert \Vert \Psi_{h}x\Vert_{2}^{2}-\Vert x\Vert_{2}^{2}   \Big\vert=\sup_{V_{x}\in \mathcal{A}}\Big\vert \Vert V_{x}\eta\Vert_{2}^{2}-\textsf{E}\Vert V_{x}\eta\Vert_{2}^{2}   \Big\vert.
\end{align}
Hence, we can use Corollary \ref{Theo_deviation} to estimate the tail bound for $\delta_{s}$ again. Below, we shall only sketch the main ideas since the proof is similar to that of Theorem \ref{Theo_RIP}.

\begin{proof}[Proof of Theorem \ref{Theo_time_frequency}]
	First, as noticed above, $\Vert V_{x}\Vert_{F}^{2}=\Vert x\Vert_{2}^{2}\le 1$ for $x\in D_{s, n}$, and thus $M_{F}(\mathcal{A})=1$. Note also that $\Vert \boldsymbol{\pi}(\lambda)\Vert_{F}=1$, so one has for $x\in D_{s, n}$
	\begin{align}
		\Vert V_{x}\Vert_{l_{2}\to l_{\alpha^{*}}}&\le \frac{1}{\sqrt{m}}\sum_{\lambda\in\{0,\cdots,  m-1\}^{2}}\vert x_{\lambda}\vert\cdot \Vert\boldsymbol{\pi}(\lambda)\Vert_{l_{2}\to l_{\alpha^{*}}}\nonumber\\
		&\le \Vert x\Vert_{1}/\sqrt{m}\le \sqrt{s/m}\Vert x\Vert_{2},\nonumber
	\end{align}
which easily implies $M_{l_{2}\to l_{\alpha^{*}}}(\mathcal{A})\le \sqrt{s/m}$.

To bound the Dudley integral, we introduce the following results proved by Krahmer et al \cite{Rauhut_CPAM} (e.g.,  Lemma 5.2)
\begin{align}\label{Eq_Time_RIP_Integral1}
	\log N(\mathcal{A}, \Vert\cdot\Vert_{l_{2}\to l_{2}}, u)\le cs\big(\log(em^{2}/s)+\log(3\sqrt{s/m}/u) \big)
\end{align}
and
\begin{align}\label{Eq_Time_RIP_Integral2}
	\log N(\mathcal{A}, \Vert\cdot\Vert_{l_{2}\to l_{2}}, u)\le c\frac{s}{m}\big(\frac{\log m}{u}\big)^{2},
\end{align}
where $0<u\le \sqrt{s/m}$ and $c$ is an absolute constant.

On the other hand, by virtue of the fact $\alpha^{*}\ge 2$ and $M_{l_{2}\to l_{\alpha^{*}}}(\mathcal{A})\le \sqrt{s/m}$, it follows
\begin{align}\label{Eq_3.22}
	\gamma_{\alpha}(\mathcal{A}, \Vert\cdot\Vert_{l_{2}\to l_{\alpha^{*}}})\le& \gamma_{\alpha}(\mathcal{A}, \Vert\cdot\Vert_{l_{2}\to l_{2}})
	\le \int_{0}^{\sqrt{s/m}}\log^{1/\alpha} N(\mathcal{A}, \Vert\cdot\Vert_{l_{2}\to l_{2}}, u)\, du\nonumber\\
	=& \int_{0}^{\sqrt{1/m}}+\int_{\sqrt{1/m}}^{\sqrt{s/m}}\log^{1/\alpha} N(\mathcal{A}, \Vert\cdot\Vert_{l_{2}\to l_{2}}, u)\, du.
\end{align}
As for the case $1\le \alpha<2$, applying (\ref{Eq_Time_RIP_Integral1})  and  (\ref{Eq_Time_RIP_Integral2}) to the first and the second integral of (\ref{Eq_3.22}) respectively, we have
\begin{align}
	\gamma_{\alpha}(\mathcal{A}, \Vert\cdot\Vert_{l_{2}\to l_{\alpha^{*}}})\lesssim_{\alpha}\frac{s^{1/\alpha}}{\sqrt{m}}\log^{2/\alpha} n.\nonumber
\end{align}
Turn to the case $\alpha=2$. We have by a similar argument
\begin{align}
	\gamma_{2}(\mathcal{A}, \Vert\cdot\Vert_{l_{2}\to l_{2}})\lesssim_{\alpha}\sqrt{\frac{s}{m}}\log s\log n.\nonumber 
\end{align}
At last, we conclude the desired result due to (\ref{Eq_remark1.6})
\end{proof}

\section{Discussions}
(1) In this section, we shall show an improved result of Corollary \ref{Theo_deviation}. To this end, we first introduce the following lemma, a comparison of weak and strong moments. One can regard this lemma as a particular case of a general result obtained by Latała \cite{Latala_Studia_Math}. For the convenience of reading, we shall briefly show how to derive Lemma \ref{Lem_Latala_EJP} from Latała’s result. We also refer to interested readers to \cite{Latala_mathematika,Latala_EJP} for some similar results in general cases.

 \begin{mylem}\label{Lem_Latala_EJP}
	Let $\{\xi_{i}, i\ge 1\}$ be a sequence of independent symmetric variables with log-concave tails. 
	Then for any nonempty $T\subset \{t :\sum t_{i}^{2}\le 1\}$, we have for $p\ge 1$
	\begin{align}
		\Big\Vert  \sup_{t\in T} \big\vert \sum t_{i}\xi_{i}\big\vert\Big\Vert_{L_{p}}\lesssim \textsf{E}\sup_{t\in T} \big\vert \sum t_{i}\xi_{i}\big\vert+\sup_{t\in T} \Big\Vert\big\vert \sum t_{i}\xi_{i}\big\vert\Big\Vert_{L_{p}}.\nonumber
	\end{align}
\end{mylem}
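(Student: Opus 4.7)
The plan is to deduce Lemma \ref{Lem_Latala_EJP} as a specialization of Latała's general weak-and-strong moment comparison for log-concave-tailed independent coordinates (see \cite{Latala_Studia_Math}). The key idea is to recast the supremum as the value of a seminorm evaluated at the random vector $\xi=(\xi_1,\ldots,\xi_n)$ and then pass to the dual side.

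First I would introduce on $\mathbb{R}^n$ the seminorm $\|x\|_T:=\sup_{t\in T}|\langle x,t\rangle|$, which is finite on all of $\mathbb{R}^n$ since $T\subset B_2^n$. Then $\sup_{t\in T}|\sum t_i\xi_i|=\|\xi\|_T$, and the quantity to bound is $\bigl\|\|\xi\|_T\bigr\|_{L_p}$. Latała's comparison provides, for every seminorm $\|\cdot\|$ on $\mathbb{R}^n$ and every $p\ge 1$, an inequality of the form
\begin{equation*}
\bigl\|\|\xi\|\bigr\|_{L_p}\lesssim \mathsf{E}\|\xi\|+\sup_{\phi\in B_*}\bigl\|\langle\phi,\xi\rangle\bigr\|_{L_p},
\end{equation*}
where $B_*$ denotes the unit ball of the dual seminorm. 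For $\|\cdot\|_T$ the dual ball (modulo the kernel) is $K:=\overline{\mathrm{conv}}(T\cup(-T))$, so the strong-moment bound becomes controlled by $\mathsf{E}\sup_{t\in T}|\sum t_i\xi_i|$ plus a supremum of $L_p$ norms of linear functionals over $K$.

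Second, I would simplify the dual side. Because $\phi\mapsto\bigl\|\langle\phi,\xi\rangle\bigr\|_{L_p}$ is itself a seminorm on $\mathbb{R}^n$, it is convex and thus attains its supremum over the compact convex set $K$ at an extreme point. The extreme points of $K$ lie in the closure of $T\cup(-T)$, so by the symmetry $\|\langle t,\xi\rangle\|_{L_p}=\|\langle -t,\xi\rangle\|_{L_p}$ one obtains
\begin{equation*}
\sup_{\phi\in K}\bigl\|\langle\phi,\xi\rangle\bigr\|_{L_p}=\sup_{t\in T}\bigl\|\textstyle\sum t_i\xi_i\bigr\|_{L_p}.
\end{equation*}
Plugging this identification into Latała's inequality yields the conclusion of the lemma.

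The main obstacle is the citation/verification step: Latała's theorem in \cite{Latala_Studia_Math} is typically phrased either for log-concave random vectors or for independent coordinates that satisfy a moment-regularity condition compatible with log-concave tails. I would need to confirm that the hypotheses of the lemma (symmetric, independent, log-concave tails $U_i(x)=-\log\mathsf{P}\{|\xi_i|\ge x\}$ convex) fall within the scope of the cited result, or alternatively invoke the analogous statement in \cite{Latala_mathematika,Latala_EJP} that is specifically formulated for the log-concave-tailed independent-coordinate setting. Once the applicable version is pinned down, the extreme-point reduction in the previous paragraph is routine, and no further regularity assumption on the $U_i$'s enters — in contrast to the chaining-based majorizing measure theorem (Lemma \ref{Lemma_chaining}), which is why the present lemma holds under the bare log-concave-tail assumption without the doubling hypothesis $U_i(2x)\le\beta_0 U_i(x)$.
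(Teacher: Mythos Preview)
Your approach is correct and essentially matches the paper's: both recast the supremum as a (semi)norm $\|x\|_T=\sup_{t\in T}|\langle x,t\rangle|$ evaluated at $\xi$ and invoke Latała's comparison from \cite{Latala_Studia_Math}. The paper resolves the verification obstacle you flag by quoting Latała's theorem in its native form (the weak-moment term is $\|(v_i)\|^{w}_{\mathcal{U},p}$), applying it with $v_i=e_i$ after enlarging $T$ to $T_\delta=T\cup\delta B_2^{\infty}$ so that $\|\cdot\|_{T_\delta}$ is a genuine norm, and then bounding $\|(t_i)\|_{\mathcal{U},p}\lesssim\|\sum t_i\xi_i\|_{L_p}$ via the one-dimensional instance of the same theorem---this last step plays the role of your extreme-point reduction and confirms that only the log-concave-tail hypothesis is needed.
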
 
\begin{proof}
	Set
	\begin{align}
		U_{i}(t)=-\log \textsf{P}\big\{ \vert\xi_{i} \vert\ge t \big\},\quad t\ge 0.\nonumber
	\end{align}
	Without loss of generality, we assume that $U_{i}(x)=1$. Define 
	$$
	\hat{U_{i}}(t)=
	\begin{cases}
		t^{2},\quad &\vert t\vert\le 1;\\
		U_{i}(\vert t\vert),\quad &\vert t\vert\ge 1.
	\end{cases}
	$$
	For sequences $(a_{i})$ of real numbers and $(v_{i})$ of vectors in some Banach space $F$, we define for $u>0$
	\begin{align}
		\big\Vert(a_{i})\big\Vert_{\mathcal{U}, u}=\sup\big\{ \sum a_{i}b_{i}:\,\, \sum\hat{U_{i}}(b_{i})\le u\big\}\nonumber
	\end{align}
	and 
	\begin{align}
		\big\Vert(v_{i})\big\Vert^{w}_{\mathcal{U}, u}=\sup\Big\{\big\Vert\big( v^{*}(v_{i}) \big) \big\Vert_{\mathcal{U}, u}:  v^{*}\in F^{*}, \Vert v^{*}\Vert^{*}\le 1 \Big\},\nonumber
	\end{align}
	where $F^{*}$ is the dual space of $F$ with the dual norm $\Vert\cdot\Vert^{*}$.  Let $v_{i}$ be vectors of some Banach space $F$ such that the series $\sum v_{i}\xi_{i}$ is almost surely convergent. Then, Theorem 1 in \cite{Latala_Studia_Math} yields for $p\ge 1$
	\begin{align}\label{Eq_Section4_Latala_studia}
		\big(\textsf{E}\Vert \sum v_{i}\xi_{i}\Vert^{p}\big)^{1/p}\asymp \textsf{E}\Vert \sum v_{i}\xi_{i} \Vert+\big\Vert(v_{i})\big\Vert^{w}_{\mathcal{U}, p},
	\end{align}
	where $\Vert\cdot\Vert$ is the norm on $F$.
	
	To obtain Lemma \ref{Lem_Latala_EJP}, we first define the following norm on $l_{2}$:
	\begin{align}
		\Vert x\Vert=\sup_{t\in T}\big\vert \sum_{i} x_{i}t_{i}\big\vert.\nonumber
	\end{align}
	where $T$ is a nonempty set satisfying $\text{span}(T)=l_{2}:=\{t: \sum t_{i}^{2}<\infty \}$. Let $e_{i}, i\ge 1$ be vectors in $l_{2}$ such that the $i$-th coordinate is $1$ and the other coordinates are $0$. Then (\ref{Eq_Section4_Latala_studia}) yields for $\sum e_{i}\xi_{i}$
	\begin{align}\label{Eq_Section_4.2}
		\Big(\textsf{E}\sup_{t\in T}\big\vert \sum t_{i}\xi_{i}\big\vert^{p}\Big)^{1/p}\asymp \textsf{E}\sup_{t\in T}\big\vert \sum t_{i}\xi_{i} \big\vert+\sup_{t\in T}\big\Vert(t_{i})\big\Vert_{\mathcal{U}, p}.
	\end{align}
	We use (\ref{Eq_Section4_Latala_studia}) again in the real case,
	\begin{align}
		\Big(\textsf{E}\big\vert \sum t_{i}\xi_{i}\big\vert^{p}\Big)^{1/p}\asymp \textsf{E}\big\vert \sum t_{i}\xi_{i} \big\vert+\big\Vert(t_{i})\big\Vert_{\mathcal{U}, p}.\nonumber
	\end{align}
	Then, 
	\begin{align}\label{Eq_Section_4.3}
		\big\Vert(t_{i})\big\Vert_{\mathcal{U}, p}\lesssim \Big(\textsf{E}\big\vert \sum t_{i}\xi_{i}\big\vert^{p}\Big)^{1/p}.
	\end{align}
	Combining (\ref{Eq_Section_4.2}) and (\ref{Eq_Section_4.3}), we finish the proof of Lemma \ref{Lem_Latala_EJP} for some special $T$. As for a general $T$, we let $T_{\delta}=T\cup \delta B_{2}^{\infty}$. Note that, $\text{span}(T_{\delta})=l_{2}$. Then, go with $\delta\to 0$ and obtain the desired result.
\end{proof}

\begin{mycorollary1.2}\label{1}
	In the setting of Corollary \ref{Theo_deviation}, we have for $p\ge 1$
	\begin{align}
		\Big\Vert \sup_{A\in\mathcal{A}}\big\vert \Vert A\xi\Vert_{2}^{2}-\textsf{E}\Vert A\xi\Vert_{2}^{2} \big\vert\Big\Vert_{L_{p}}\lesssim_{L, \alpha}& U_{1}(\alpha)+\sqrt{p}U_{2}^{\prime}(\alpha)+p^{1/\alpha}U_{3}^{\prime}(\alpha)\nonumber\\
		&+p^{2/\alpha}M^{2}_{l_{2}\to l_{2}}(\mathcal{A}).\nonumber
	\end{align}
Here, $U_{2}^{\prime}(\alpha)=M_{l_{2}\to l_{2}}(\mathcal{A})\Gamma(\alpha, \mathcal{A})+\sup_{A\in}\Vert A^\top A\Vert_{F}$ and  $U_{3}^{\prime}(\alpha)=M_{l_{2}\to l_{\alpha^{*}}}(\mathcal{A})\Gamma(\alpha, \mathcal{A})$.
\end{mycorollary1.2}

\begin{proof}[Proof of Corollary \ref{Theo_deviation}]
	Let $\zeta=(\zeta_{1}, \cdots, \zeta_{n})$ and $\zeta_{1}, \cdots, \zeta_{n}\stackrel{i.i.d.}{\sim}\mathcal{W}_{s}(\alpha)$. Set $S=\{ A^\top x: x\in B^{n}_{2}, A\in\mathcal{A} \}$. We have by Lemma \ref{Lem_Latala_EJP} 
	\begin{align}\label{Eq_Lpbound}
		\Big\Vert\sup_{A\in\mathcal{A}} \Vert A\zeta\Vert_{2}\Big\Vert_{L_{p}}&=\Big(\textsf{E}\sup_{A\in\mathcal{A}, x\in B_{2}^{n}}\vert x^\top A\zeta\vert^{p}   \Big)^{1/p}=\Big(\textsf{E}\sup_{u\in S}\vert u^\top \zeta\vert^{p}   \Big)^{1/p}\nonumber\\
		&\lesssim_{\alpha} \textsf{E}\sup_{u\in S}\vert u^\top \zeta\vert+\sup_{u\in S}\Vert u^\top \zeta\Vert_{L_{p}}\nonumber\\
		&\lesssim_{\alpha} \textsf{E}\sup_{u\in S}\vert u^\top \zeta\vert+\sqrt{p}\sup_{u\in S}\Vert u\Vert_{2}+p^{1/\alpha}\sup_{u\in S}\Vert u\Vert_{\alpha^{*}}\nonumber\\
		&=\textsf{E}\sup_{A\in \mathcal{A}}\Vert A\zeta\Vert_{2}+\sqrt{p}M_{l_{2}\to l_{2}}(\mathcal{A})+p^{1/\alpha}M_{l_{2}\to l_{\alpha^{*}}}(\mathcal{A}).
	\end{align}
Note that
\begin{align}
	\textsf{E}\sup_{A\in\mathcal{A}}\Vert A\zeta\Vert_{2}\lesssim_{\alpha}\Gamma(\alpha, \mathcal{A})+M_{F}(\mathcal{A}).\nonumber
\end{align}
Then, we conclude the proof by Proposition \ref{Prop_decoupling} and Lemma \ref{Theo_bound_decoupled}
\end{proof}

(2) Let $\varepsilon=(\varepsilon_{1}, \cdots,\varepsilon_{n})$, where $\varepsilon_{1}, \cdots, \varepsilon_{n}$ are i.i.d. symmetric Bernoulli variables. Readers may notice that the structured random matrices considered in \cite{Rauhut_CPAM} are generated by $\varepsilon$ but Krahmer et al \cite{Rauhut_CPAM}  proved the R.I.P. of the structured random matrices generated by a standard sub-Gaussian vector. Obviously, the optimality of the final result will be affected by this approach.

Bednorz and Latała \cite{Bednorz_annals} presented a positive solution to the so-called Bernoulli Conjecture, which was open for about 25 years. In particular, let $T$ be a nonempty set of $l_{2}$. They showed
\begin{align}
	\sup_{t\in T}\sum_{i=1}^{\infty}t_{i}\varepsilon_{i}\asymp\inf\{\sup_{t\in T_{1}}\Vert t\Vert_{1}+\gamma_{2}(T_{2}, \Vert\cdot\Vert_{2}): T\subset T_{1}+T_{2}  \}.\nonumber
\end{align}
This celebrated result makes it possible to show the R.I.P. of the structured random matrices induced by $\varepsilon$ directly, which may improve the results in \cite{Rauhut_CPAM}. However, there are still many obstacles to solving this problem. For example, how to determine the decomposition of $T$ in a specific case.

\section{Appendix}
\subsection{R.I.P. of $\alpha$-Subexponential Random Matrices}
We know that subgaussian random matrices (i.e., random matrices with independent standard subgaussian entries) satisfy R.I.P.
In this subsection, we would like to add a proof that $\alpha$-subexponential random matrices also satisfy R.I.P., where $1\le \alpha\le 2$. 
\begin{mypro}
	 Let $\Phi=1/\sqrt{m}(\xi_{ij})_{m\times n}$ be a random matrix such that for $1\le i\le m, 1\le j\le n$
	\begin{align}
		\textsf{E}\xi_{ij}=0,\quad \textsf{E}\xi_{ij}^{2}=1,\quad \max_{ij}\Vert\xi_{ij} \Vert_{\Psi_{\alpha}}\le L.\nonumber
	\end{align}
	Then for $\delta, \varepsilon\in (0, 1)$, $\delta_{s}\le \delta$ with probability at least $1-\exp\big(-c(\alpha, L)s\log(en/s)\big)$ provided
	\begin{align}
		m\ge C(\alpha, L)\delta^{-2}\big(s\log(en/s) \big)^{2/\alpha}. \nonumber
	\end{align}
\end{mypro}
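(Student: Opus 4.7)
My plan is to recast $\delta_{s}$ as the supremum of a quadratic chaos and then apply Corollary~\ref{Theo_deviation}. Flatten the entries of $\Phi$ into a vector $\xi=(\xi_{11},\ldots,\xi_{mn})\in\mathbb{R}^{mn}$ and, for each $x\in D_{s,n}:=\{x\in\mathbb{R}^{n}:\Vert x\Vert_{2}\le 1,\ \Vert x\Vert_{0}\le s\}$, define the $m\times mn$ matrix $A_{x}$ by $(A_{x})_{i,(i-1)n+j}=x_{j}/\sqrt{m}$ and zero elsewhere, so that $A_{x}\xi=\Phi x$ and $\textsf{E}\Vert A_{x}\xi\Vert_{2}^{2}=\Vert A_{x}\Vert_{F}^{2}=\Vert x\Vert_{2}^{2}$. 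Setting $\mathcal{A}:=\{A_{x}:x\in D_{s,n}\}$, we obtain $\delta_{s}=\sup_{A\in\mathcal{A}}\big\vert\Vert A\xi\Vert_{2}^{2}-\textsf{E}\Vert A\xi\Vert_{2}^{2}\big\vert$, which is exactly the object bounded by the deviation inequality \eqref{Eq_remark1.6}.

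Next I would compute the ingredients of $U_{1},U_{2},U_{3}$. Because the nonzero rows of $A_{x}$ lie in disjoint blocks of coordinates, $A_{x}A_{x}^{\top}=(\Vert x\Vert_{2}^{2}/m)I_{m}$, so $\Vert A_{x}\Vert_{F}=\Vert x\Vert_{2}$ and $\Vert A_{x}\Vert_{l_{2}\to l_{2}}=\Vert x\Vert_{2}/\sqrt{m}$; since $\alpha^{*}\ge 2$, also $\Vert A_{x}\Vert_{l_{2}\to l_{\alpha^{*}}}\le\Vert A_{x}\Vert_{l_{2}\to l_{2}}$. Thus $M_{F}(\mathcal{A})\le 1$ and $M_{l_{2}\to l_{2}}(\mathcal{A}),\,M_{l_{2}\to l_{\alpha^{*}}}(\mathcal{A})\le 1/\sqrt{m}$. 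Linearity $A_{x}-A_{y}=A_{x-y}$ shows that both metrics entering $\Gamma(\alpha,\mathcal{A})$ are dominated by $\Vert x-y\Vert_{2}/\sqrt{m}$, giving $\Gamma(\alpha,\mathcal{A})\le(1/\sqrt{m})\big(\gamma_{2}(D_{s,n},\Vert\cdot\Vert_{2})+\gamma_{\alpha}(D_{s,n},\Vert\cdot\Vert_{2})\big)$. Combining the standard covering bound $N(D_{s,n},\Vert\cdot\Vert_{2},u)\le\binom{n}{s}(1+2/u)^{s}$ with \eqref{Eq_gamma_covering}, integrating up to the diameter $2$ of $D_{s,n}$, and exploiting $1/\alpha\ge 1/2$, I would deduce $\gamma_{\alpha}(D_{s,n},\Vert\cdot\Vert_{2})\lesssim_{\alpha}s^{1/\alpha}\log^{1/\alpha}(en/s)$, and hence $\Gamma(\alpha,\mathcal{A})\lesssim_{\alpha}s^{1/\alpha}\log^{1/\alpha}(en/s)/\sqrt{m}$.

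Finally I would substitute these estimates into \eqref{Eq_remark1.6} with $t=\delta/2$. Under the hypothesis $m\ge C(\alpha,L)\delta^{-2}(s\log(en/s))^{2/\alpha}$, choosing $C(\alpha,L)$ large enough forces $\Gamma(\alpha,\mathcal{A})\le 1$, and then $U_{1}(\alpha)\lesssim_{\alpha}\Gamma(\alpha,\mathcal{A})(\Gamma(\alpha,\mathcal{A})+1)\lesssim\Gamma(\alpha,\mathcal{A})$, so the offset $C(\alpha)L^{2}U_{1}(\alpha)$ on the left of \eqref{Eq_remark1.6} can be absorbed into $\delta/2$. Inserting $U_{2}(\alpha),U_{3}(\alpha)\lesssim_{\alpha}1/\sqrt{m}$ and $M_{l_{2}\to l_{2}}^{2}(\mathcal{A})\le 1/m$, the three exponents on the right side of \eqref{Eq_remark1.6} become, up to constants, $\delta^{2}m$, $(\delta\sqrt{m})^{\alpha}$, and $(\delta m)^{\alpha/2}$; evaluated at the boundary $m=\delta^{-2}(s\log(en/s))^{2/\alpha}$, these are $(s\log(en/s))^{2/\alpha}$, $s\log(en/s)$, and $(s\log(en/s))\delta^{-\alpha/2}$, so the binding one is the middle exponent and produces exactly the target $s\log(en/s)$. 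This yields $\textsf{P}\{\delta_{s}>\delta\}\le\exp(-c(\alpha,L)s\log(en/s))$. The main obstacle is the bookkeeping of constants: one must select a single $C(\alpha,L)$ so that the offset condition $C(\alpha)L^{2}U_{1}(\alpha)\le\delta/2$, the normalization $\Gamma(\alpha,\mathcal{A})\le 1$, and all three rates in \eqref{Eq_remark1.6} are simultaneously controlled under one hypothesis on $m$.
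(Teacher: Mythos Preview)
Your proposal is correct and follows essentially the same approach as the paper: your matrix $A_{x}$ is exactly the block-diagonal matrix $V_{x}$ the paper constructs, and both proofs then compute $M_{F},M_{l_{2}\to l_{2}},M_{l_{2}\to l_{\alpha^{*}}}$ and bound $\gamma_{\alpha}$ via the same volumetric covering estimate $N(D_{s,n},\Vert\cdot\Vert_{2},u)\le(en/s)^{s}(1+2/u)^{s}$ before applying Corollary~\ref{Theo_deviation}. Your final paragraph spelling out which of the three exponents in \eqref{Eq_remark1.6} is the binding one is more detailed than the paper's terse ``Theorem~\ref{Theo_deviation} implies the desired assertion,'' but the argument is identical.
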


\begin{proof}
	Fix $x=(x_{1}, \cdots, x_{n})^{\top}\in \mathbb{R}^{n}$ and let $\sqrt{m}V_{x}$ be an $m\times nm$ block-diagonal matrix as follows
	\begin{align}
		\begin{pmatrix}
			x^{\top}& 0 &\cdots & 0 \\
			0& x^{\top} &\cdots & 0 \\
			\vdots & \vdots & \ddots & \vdots\\
			0 & 0 & \cdots & x^{\top}  \nonumber
		\end{pmatrix}.
	\end{align}
	Hence, we have $\Phi x=V_{x}\xi$, where $\xi=(\xi_{11},\cdots,\xi_{1n}, \xi_{21}, \cdots,\xi_{mn})^{\top}$, a vector of length $nm$.
	
	Let $\mathcal{A}:=\{ V_{x}: \Vert x\Vert_{0}\le s, \Vert x\Vert_{2}\le 1  \}$. It is obvious that
	\begin{align}
		\sup_{V_{x}\in\mathcal{A}}\Vert V_{x}\Vert_{F}=\sup_{x\in D_{s, n}}\Vert x\Vert_{2}=1,\nonumber
	\end{align}
	where $D_{s, n}=\{x\in\mathbb{R}^{n}: \Vert x\Vert_{0}\le s, \Vert x\Vert_{2}\le 1  \}$.
	Note that the  operator norm of a block-diagonal matrix is the maximum of the operator norms of the diagonal blocks. Then for $1\le \alpha\le 2$
	\begin{align}
		\sup_{V_{x}\in\mathcal{A}}\Vert V_{x} \Vert_{l_{2}\to l_{\alpha^{*}}}=\sup_{V_{x}\in\mathcal{A}}\Vert V_{x} \Vert_{l_{2}\to l_{2}}=\frac{1}{\sqrt{m}} 	\sup_{x\in D_{s, n}}\Vert x \Vert_{2}=\frac{1}{\sqrt{m}},\nonumber
	\end{align}
	where $\alpha^{*}=\alpha/(\alpha-1)$.
	Observe that 
	\begin{align}\label{Eq_appendix_gamma_iid}
		\gamma_{\alpha}\big(\mathcal{A}, \Vert \cdot\Vert_{l_{2}\to l_{\alpha^{*}}}  \big)&\le \gamma_{\alpha}\big(D_{s, n}, \Vert \cdot\Vert_{2}/\sqrt{m}  \big)\nonumber\\
		&\lesssim \frac{1}{\sqrt{m}}\int_{0}^{1}\log^{1/\alpha}N(D_{s, n}, \Vert\cdot\Vert_{2}, u)\, du.
	\end{align}
	At the same time, the volumetric argument yields
	\begin{align}
		N(D_{s, n}, \Vert\cdot\Vert_{2}, u)\le (\frac{en}{s})^{s}(1+\frac{2}{u})^{s}.\nonumber
	\end{align}
	Hence, (\ref{Eq_appendix_gamma_iid}) is further bounded by
	\begin{align}
		C\frac{s^{1/\alpha}}{\sqrt{m}}\Big( \log^{1/\alpha}(en/s)+\int_{0}^{1}\log^{1/\alpha}(1+2/u)\, du   \Big)\le C_{1}\frac{1}{\sqrt{m}}\big(s\log(en/s)\big)^{1/\alpha}.\nonumber
	\end{align}
	Theorem \ref{Theo_deviation} implies the desired assertion.
\end{proof}

\subsection{Bounds of $\gamma_{\alpha}$-functionals}
This subsection is devoted to a detailed calculation for the estimates of $\gamma_{\alpha}$-functionals. Let us start with the case $\alpha=2$. First, observe that
\begin{align}
	\gamma_{2}(\mathcal{A}, \Vert\cdot\Vert_{l_{2}\to l_{2}})&\lesssim \int_{0}^{M_{l_{2}\to l_{2}}(\mathcal{A})} \log^{\frac{1}{2}}N(\mathcal{A}, \Vert\cdot\Vert_{l_{2}\to l_{2}}, u)\, du\nonumber\\
	&\lesssim \int_{0}^{\frac{1}{\sqrt{m}}} s^{\frac{1}{2}}\log^{\frac{1}{2}}(\frac{en}{su})\, du+\int_{\frac{1}{\sqrt{m}}}^{\sqrt{\frac{s}{m}}} \frac{\sqrt{s}}{\sqrt{m}u}\log n\, du.\nonumber
\end{align}
Also, a simple calculus yields
\begin{align}
	\int_{0}^{\frac{1}{\sqrt{m}}} s^{\frac{1}{2}}\log^{\frac{1}{2}}(\frac{en}{su})\, du&\le \sqrt{\frac{s}{m}}\log^{\frac{1}{2}}(\frac{en}{s})+\sqrt{s}\int_{0}^{\frac{1}{\sqrt{m}}}\log^{\frac{1}{2}}\frac{1}{u}\, du\nonumber\\
	&\le \sqrt{\frac{s}{m}}\log^{\frac{1}{2}}(\frac{en}{s})+\sqrt{s}\int_{\log\sqrt{m}}^{\infty}\sqrt{t}e^{-t}\, dt\nonumber\\
	&\le \sqrt{\frac{s}{m}}\log^{\frac{1}{2}}(\frac{en}{s})+\sqrt{\frac{s}{m}}\int_{\log\sqrt{m}}^{\infty}\sqrt{t}e^{-t+\log\sqrt{m}}\, dt\nonumber\\
	&\lesssim \sqrt{\frac{s}{m}}\sqrt{\log m}\nonumber
\end{align}
and
\begin{align}
     \int_{\frac{1}{\sqrt{m}}}^{\sqrt{\frac{s}{m}}} \frac{\sqrt{s}}{\sqrt{m}u}\log n\, du\lesssim \sqrt{\frac{s}{m}}\log n\log s.\nonumber
\end{align}
Hence, we have 
\begin{align}
	\gamma_{2}(\mathcal{A}, \Vert\cdot\Vert_{l_{2}\to l_{2}})\lesssim \sqrt{\frac{s}{m}}\log n\log s.\nonumber
\end{align}
 
 As for the case $1\le \alpha<2$, we have by virtue of $\alpha^{*}>2$
 \begin{align}
 	\gamma_{\alpha}(\mathcal{A}, \Vert\cdot\Vert_{l_{2}\to l_{\alpha^{*}}})\le \gamma_{\alpha}(\mathcal{A}, \Vert\cdot\Vert_{l_{2}\to l_{2}})\lesssim_{\alpha}& \int_{0}^{\frac{1}{\sqrt{m}}} s^{\frac{1}{\alpha}}\log^{\frac{1}{\alpha}}(\frac{en}{su})\, du\nonumber\\
 	&+\int_{\frac{1}{\sqrt{m}}}^{\sqrt{\frac{s}{m}}} \Big(\frac{s}{mu^{2}}\Big)^{\frac{1}{\alpha}}\log^{\frac{2}{\alpha}} n\, du.\nonumber
 \end{align}
Similarly, we have
\begin{align}
	\int_{0}^{\frac{1}{\sqrt{m}}} s^{\frac{1}{\alpha}}\log^{\frac{1}{\alpha}}(\frac{en}{su})\, du\lesssim_{\alpha} \frac{s^{\frac{1}{\alpha}}}{\sqrt{m}}\log^{\frac{1}{\alpha}} m\nonumber
\end{align}
and
\begin{align}
	\int_{\frac{1}{\sqrt{m}}}^{\sqrt{\frac{s}{m}}} \Big(\frac{s}{mu^{2}}\Big)^{\frac{1}{\alpha}}\log^{\frac{2}{\alpha}} n\, du\lesssim_{\alpha} \frac{s^{\frac{1}{\alpha}}}{\sqrt{m}}\log^{\frac{2}{\alpha}} n.\nonumber 
\end{align}
In combination, it follows
\begin{align}
	\gamma_{\alpha}(\mathcal{A}, \Vert\cdot\Vert_{l_{2}\to l_{\alpha^{*}}})\lesssim \frac{s^{\frac{1}{\alpha}}}{\sqrt{m}}\log^{\frac{2}{\alpha}} n,\nonumber
\end{align}
as desired.



\textbf{Acknowledgment} The authors are grateful to Rafał Latała for his fruitful discussions regarding Lemma \ref{Lem_Latala_EJP}. Su was partly supported by NSFC (Grant Nos. 12271475 and U23A2064). Wang was partly supported by NSFC (Grant Nos. 12071257, 12371148);   National Key R$\&$D Program of China (No.2018YFA0703900); Shandong Provincial Natural Science Foundation (No. ZR2019ZD41).


\end{document}